\documentclass[12pt, letterpaper]{article}

\usepackage{amsmath,amssymb,amsfonts}
\usepackage{amsthm}
\usepackage{booktabs}
\usepackage{array}
\usepackage{float}
\usepackage[colorlinks=true,linkcolor=blue,citecolor=blue,urlcolor=blue]{hyperref}

\theoremstyle{plain}
\newtheorem{theorem}{Theorem}[section]
\newtheorem{proposition}[theorem]{Proposition}
\newtheorem{lemma}[theorem]{Lemma}
\newtheorem{corollary}[theorem]{Corollary}

\theoremstyle{definition}
\newtheorem{definition}[theorem]{Definition}
\newtheorem{example}[theorem]{Example}
\newtheorem{remark}[theorem]{Remark}

\newcommand{\F}{\mathbb{F}}
\newcommand{\A}{{\mathcal{A}}}
\newcommand{\dist}{\operatorname{d}}
\newcommand{\mc}[1]{\mathcal{#1}}

\title{Length-Maximal Codes with Given Singleton Defect:\\ Structure and Bounds}

	\author{Tim Alderson\\Department of Mathematics and Statistics,\\ University of New Brunswick Saint John, \\ Saint John, NB, Canada. \thanks{The author acknowledges the support of the Natural Sciences and Engineering Research Council of Canada (NSERC), [funding reference number 2019-04103]\\
			Cette recherche a \'{e}t\'{e} financ\'{e}e par le Conseil de recherches en sciences naturelles et en g\'{e}nie du Canada (CRSNG), [num\'{e}ro de r\'{e}f\'{e}rence 2019-04103].} }

\begin{document}
	
	\maketitle

\begin{abstract}
	Let $\mathcal C$ be an $(n,q^k,d)_q$ code of integer dimension $k\ge2$ and Singleton defect $s=n-k+1-d$. Repeated shortening followed by the Plotkin bound gives
	\[
	 n\le (s+1)(q+1)+k-2.
	\]
	We study the equality case, calling a code attaining this bound \emph{length-maximal}. Equality provides rigid structure: the code is symbol-uniform under every successive shortening, is an orthogonal array of strength at least $k-1$, and has pairwise distances in $\{d\}\cup\{n-k+3,\ldots,n\}$. For $k\ge3$ it also satisfies $s\le q-1$ and $(s+2)\mid q(q+1)$. In dimension two, length-maximal codes are equivalent to resolvable $2$-$(q^2,q,s+1)$ multidesigns.

	The (inner) distance distribution, and hence the Hamming weight enumerator after any codeword is normalised to zero, is determined by the parameters. In dimension four this gives three cases: apart from the MDS case, only $s=q-2$ and $s=q-1$ remain possible; in the latter case the alphabet size is constrained by $(q+2)\mid36$, leaving six values of $q$. For $q\ge4$, every length-maximal code of dimension at least five is MDS, and requires $q$ to be a multiple of $36$. The non-MDS cases are the dual and extended ternary Golay codes, of dimensions five and six. Consequently, for $s\ge1$ one has $k\le4$ unless $q=3$, where $k\le6$.

	By shortening the codes before applying Plotkin, we provide sharper large-defect bounds and several ranges in which nonlinear codes satisfy the Griesmer bound.
\end{abstract}

\medskip

	\noindent\textbf{Keywords:} nonlinear codes; Singleton defect; Plotkin bound; orthogonal arrays; maximal arcs; resolvable designs.

\smallskip

\noindent\textbf{MSC 2020:} 94B65; 94B25; 94B60; 05B05.

	\newpage
	\section{Introduction}
	
	An \emph{$(n,M,d)_q$ code} $\mc C$ is an $M$-subset of $\A^n$, where $|\A|=q$ and $d$ is the minimum Hamming distance between distinct codewords. If $\A=\F_q$ and $\mc C$ is a $k$-dimensional subspace of $\F_q^n$, it is a linear $[n,k,d]_q$ code. More generally, the dimension of an arbitrary code is $\log_q M$; throughout the main results we assume this dimension is an integer, so $M=q^k$. The \emph{Singleton defect} of
	$\mathcal{C}$ is
	\[
	s = n - k + 1 - d \ge 0,
	\]
	measuring how far the code falls short of the Singleton bound \cite{Singleton1964, MacWilliamsSloane1977}. A code with $s = 0$ is \emph{maximum distance separable} (MDS); a code with $s = 1$ is \emph{almost maximum distance separable} (AMDS) \cite{DeBoer1996}; and more generally a code with defect $s$ is called an $A^s$MDS code.
	
	For linear codes, length questions are naturally geometric: the columns of a generator matrix form a multiset of points, a \emph{projective system}, in
	$\mathrm{PG}(k-1,q)$. For a fixed defect $s$, the length $n$ is maximised when this multiset is a maximal $(n, k+s-1)$-arc, that is, an arc meeting the upper bound
	\begin{equation}\label{eq:arc_bound}
		n \le (s+1)(q+1) + k - 2.
	\end{equation}
	Maximal arcs are rare. In the planar case ($k=3$), the celebrated theorem of Ball, Blokhuis, and Mazzocca \cite{Ball1997MaximalArcs} establishes that non-trivial maximal arcs (those with $s < q-2$) do not exist in $\mathrm{PG}(2,q)$ for odd $q$. In even characteristic, maximal arcs exist whenever $(s+2) \mid q$, due to constructions of Denniston \cite{Denniston1969}. 
	
	Although maximal arcs do not exist in Desarguesian planes of odd order, De Clerck, Delanote, Hamilton, and Mathon constructed a \emph{perp-system} of $21$ lines in $\mathrm{PG}(5,3)$ such that every hyperplane contains $0$ or $3$ of the lines \cite{DeClerck2002PerpSystems}. It gives an additive, non-$\F_9$-linear, length-maximal $(21,9^3,18)_9$ AMDS code \cite{alderson2026setssubspacesrestrictedhyperplane}. To the best of our knowledge, this is the only known non-trivial three-dimensional length-maximal code over an odd-order alphabet for parameters forbidden in the linear setting.

	A second question is which classical linear bounds persist without linearity. The Griesmer bound does not hold for arbitrary nonlinear codes. Guerrini, Meneghetti, and Sala \cite{Guerrini2015Optimal}, building on Bellini and Meneghetti \cite{Bellini2015Griesmer}, studied \emph{systematic} codes, that is, images of injections of the form $x\mapsto(x,f_{k+1}(x),\ldots,f_n(x))$. Adopting their notation, let $N_q(M,d)$, $S_q(k,d)$, and $L_q(k,d)$ be the minimum lengths of unrestricted, systematic, and linear codes, respectively. They proved, among other results, that the Griesmer bound holds for unrestricted codes when $M\ge q^k$ and $q^{k-1}\mid d$, and established for binary systematic codes the inequality
		\begin{equation}\label{eq:GMS_bound}
		S_2(k,d) \ge k + \left\lceil \tfrac{3}{2}d \right\rceil - 2.
	\end{equation}
	The Plotkin bound applies without linearity. Followed by repeated shortening, it gives \eqref{eq:arc_bound} for every $(n,q^k,d)_q$ code. The inequality itself is quite elementary, and the subject of this paper is the rigidity imposed by equality. We call a code attaining \eqref{eq:arc_bound} \emph{length-maximal}. The terminology reflects the connection with maximal arcs while allowing an arbitrary alphabet and no additivity assumption.
	
	Equality forces symbol-uniformity after every successive shortening. Equivalently, a length-maximal code is an orthogonal array of strength at least $k-1$. Its pairwise distances lie in $\{d\}\cup\{n-k+3,\ldots,n\}$, and for $k\ge3$ one has $s\le q-1$ and $(s+2)\mid q(q+1)$. Dimension-two codes correspond to resolvable $2$-$(q^2,q,s+1)$ multidesigns, a defect-based formulation of the classical correspondence of Semakov and Zinov'ev \cite{SemakovZinoviev1968} between equidistant codes meeting the Plotkin bound and resolvable designs. In dimension three the zero-sets of minimum-weight words form a $2$-design; a support of multiplicity $q-1$ forces the stronger linear divisibility $(s+2)\mid q$ when $s\le q-2$.

	The orthogonal-array moments determine the inner distance distribution from every codeword. For $k=4$ they yield three cases: no length-maximal code exists for $1\le s\le q-3$; when $s=q-2$ there is no full-distance pair; and when $s=q-1$ such pairs are forced and integrality requires $(q+2)\mid36$. For $q\ge4$, every length-maximal code of dimension at least five is MDS, giving nonexistence when $36\nmid q$. The exceptional non-MDS cases are the dual and extended ternary Golay codes. Thus the ordinary Hamming weight enumerator, after any codeword is normalised to zero, is determined by the parameters, although existence remains open in several families.
	
	Table~\ref{tab:by_dimension} summarises the existence picture. Outside the unresolved MDS case $36\mid q$, the known separation between the linear and unrestricted settings occurs only in dimensions two through four. In dimensions at least five and with $36\nmid q$, every length-maximal code is equivalent to a linear code.

	\begin{table}[htbp]
		\centering
		\small
		\renewcommand{\arraystretch}{1.25}
		\caption{Length-maximal codes by dimension $k$: where linear and genuinely nonlinear examples are known to exist, representative codes, and the relevant results. Rows account for all defects $s \ge 0$; for $q \ge 4$ and $k \ge 5$ every length-maximal code is necessarily MDS (Theorem~\ref{thm:dim5}), and for $q > 2$ and $k \ge 4$ the MDS case $s=0$ requires $36 \mid q$ (Theorem~\ref{thm:bruen_silverman}(2)), with no example known. For $k \ge 5$ and $36 \nmid q$, every length-maximal code is equivalent to a linear one (Corollary~\ref{cor:dim5_linearity}). The forced  distance distributions are given in Table~\ref{tab:enumerators}.}
		\label{tab:by_dimension}
		\begin{tabular}{@{}c >{\raggedright\arraybackslash}p{1.4in} >{\raggedright\arraybackslash}p{1.7in} >{\raggedright\arraybackslash}p{1.35in} >{\raggedright\arraybackslash}p{1in}@{}}
			\toprule
			$k$ & linear length-maximal & genuinely nonlinear & representative example(s) & references \\
			\midrule
			$2$ & all $s$ ($q$ a prime power) &  all $s$, whenever a non-Desarguesian affine plane of order $q$ exists (least $q{=}9$) & non-Desarguesian planes & Prop.~\ref{prop:2d_design_equiv}, Cor.~\ref{cor:2d_existence} \\
			$3$ & $s{\in}\{q{-}2,\,q{-}1\}$ (all $q$); even $q$ with $(s{+}2)\mid q$; none for odd $q$, $0\le s\le q{-}3$ & $s{=}1$, $q{=}9$; possible only if $(s{+}2)\mid q(q{+}1)$ & perp-system $(21,9^3,18)_9$ & Lem.~\ref{lem:divisibility}, Thm.~\ref{thm:zero_set_design}, Cor.~\ref{cor:dim23_enumerator}; \cite{DeClerck2002PerpSystems} \\
			$4$ & $s{=}q{-}2$ (all prime powers $q$); $s{=}q{-}1$ only $q{=}2$ & none known. Possible only if $q>2$ and: $s{=}0$ with $36\mid q$; $s{=}q{-}1$ with $(q{+}2)\mid36$; or $s{=}q{-}2$ & ovoid codes; extended Hamming $[8,4,4]_2$ & Thm.~\ref{thm:dim4_enumerator}, Cor.~\ref{cor:dim4_trichotomy}, Rem.~\ref{rem:dim4_q1_candidates} \\
			$5$ & $q{=}2$, $s{=}0$; $q{=}3$, $s{=}1$ (both unique) & none known. Possible only if $36\mid q$ & dual Golay $[11,5,6]_3$ & Lem.~\ref{lem:dim5_enumerator}, Thm.~\ref{thm:dim5existence}, Cor.~\ref{cor:dim5_linearity} \\
			$6$ & $q{=}2$, $s{=}0$; $q{=}3$, $s{=}1$ (both unique) & none known. Possible only if $36\mid q$ & extended Golay $[12,6,6]_3$ & Prop.~\ref{prop:dim6_q3}, Rem.~\ref{rem:uniqueness}, Cor.~\ref{cor:dim5_linearity} \\
			$\ge 7$ & only $q{=}2$, $s{=}0$ (unique) & none known. Possible only if $36\mid q$ & even-weight $(k{+}1,2^k,2)_2$ & Prop.~\ref{prop:q3_dimension}, Cor.~\ref{cor:dim5_linearity} \\
			\bottomrule
		\end{tabular}
	\end{table}

	For general, not necessarily length-maximal codes, shortening to any intermediate dimension and applying Plotkin gives a hierarchy of length bounds (Theorem~\ref{thm:plotkin_shortening_hierarchy}). Its first member is \eqref{eq:arc_bound}, with subsequent members offering improvement as the defect grows. The hierarchy and the binary Plotkin refinements also identify several ranges in which unrestricted nonlinear codes satisfy the Griesmer bound, collected in Table~\ref{tab:griesmer_conditions}.
	
Section~\ref{sec:prelim} fixes notation and recalls the required bounds. Section~\ref{sec:maximal_arc_bound} develops the equality theory and the dimension-by-dimension classification. Section~\ref{sec:refinements} gives binary refinements, the general Plotkin--shortening hierarchy, and Griesmer consequences. Section~\ref{sec:conclusion} concludes with open problems.

\subsection{Notation}

We collect the main notation and standing assumptions used throughout the paper in Table~\ref{tab:notation}.

\begin{table}[htbp]
	\centering
	\renewcommand{\arraystretch}{1.2}
	\begin{tabular}{@{}l p{4.15in}@{}}
		\toprule
		\textbf{Notation} & \textbf{Meaning} \\
		\midrule
		$(n,M,d)_q$ code $\mathcal{C}$ 
		& $M$-element subset of $\A^n$ ($|\A|=q$) with minimum Hamming distance exactly $d$ \\
		$\dist(\mathbf{u},\mathbf{v})$ 
		& Hamming distance between codewords $\mathbf{u}$ and $\mathbf{v}$ \\
		$k = \log_q M$
		& dimension of the code; throughout, $M = q^k$ \\
		$s = n - k + 1 - d$
		& Singleton defect ($\ge 0$); the code is called an $A^s$MDS code \\
		$\mathbf{0} \in \mathcal{C}$ 
		& standing assumption (without loss of generality, by symbol permutation) \\
		symbol-uniform 
		& every coordinate has each symbol of $\A$ appearing exactly $M/q$ times (requires $q \mid M$) \\
		$\mathcal{C}_i^{(a)}$ 
		& shortened code obtained by deleting coordinate $i$ from all codewords with $i$-th entry equal to $a$ \\
		$g_q(k,d) = \sum_{i=0}^{k-1} \lceil d/q^i \rceil$ 
		& Griesmer bound for linear $[n,k,d]_q$ codes \\
		\midrule
		\multicolumn{2}{@{}p{6.3in}@{}}{\textit{Standing assumptions}} \\
		\multicolumn{2}{@{}p{6.3in}@{}}{\quad $k \ge 2$ for all main results in Sections \ref{sec:maximal_arc_bound} and~\ref{sec:refinements}} \\
		\multicolumn{2}{@{}p{6.3in}@{}}{\quad A code is \emph{length-maximal} if it attains the bound of Theorem \ref{thm:main_bound} } \\
		\bottomrule
	\end{tabular}
	\caption{Notation and standing assumptions.}
	\label{tab:notation}
\end{table}
	\section{Preliminaries}\label{sec:prelim}
	
	In this section we establish notation, recall the fundamental bounds used 	throughout, and introduce the two key structural concepts of the paper: 	symbol-uniformity and shortening.
	
	\subsection{Code Equivalence}

	Two codes $\mathcal{C}, \mathcal{C}' \subseteq \A^n$ are \emph{equivalent} if one can be obtained from the other by a combination of \emph{positional permutations} (permuting the $n$ coordinate positions) and \emph{symbol permutations} (applying, independently in each coordinate, a bijection of $\A$) \cite{Welsh1988}. Equivalent codes have identical parameters $n$, $M$, $d$. Since an appropriate symbol permutation can map any chosen codeword to the all-zero word, we may assume without loss of generality that $\mathbf{0} \in \mathcal{C}$, and we adopt this convention throughout. A code $\mc{C}$ is said to be \textit{equivalent to linear} if there is a linear code $\mc{C}'$ that is equivalent to $\mc{C}$; a code that is not equivalent to linear is called \emph{genuinely nonlinear}.
	
	\subsection{Fundamental Bounds}
	
	The Singleton bound holds for all codes, regardless of linearity or structure.
	
	\begin{theorem}[Singleton Bound \cite{Singleton1964, MacWilliamsSloane1977}]
		For any $(n, M, d)_q$ code with dimension $\kappa = \log_q M$,
		\[
		d \le n - \lceil \kappa \rceil + 1.
		\]
	\end{theorem}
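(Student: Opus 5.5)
The plan is a pigeonhole argument on a projection of the code. Let $\mathcal{C}\subseteq\A^n$ be an $(n,M,d)_q$ code with $\kappa=\log_q M$. Since $d\le n$ always and $\lceil\kappa\rceil\ge 1$ whenever $M\ge 2$, the claimed inequality is meaningful. If $M=1$, then $d$ is not defined by a pair of distinct codewords, so we may assume $M\ge 2$ and hence $\kappa>0$. Set $m=n-d+1$, and consider the projection $\pi:\A^n\to\A^{m}$ onto the first $m$ coordinates; these are $m$ coordinates out of $n$, and $m\le n$ because $d\ge1$.

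The key step is to show that $\pi$ is injective on $\mathcal{C}$. Suppose $\mathbf u\neq\mathbf v$ in $\mathcal{C}$ have $\pi(\mathbf u)=\pi(\mathbf v)$. Then the two words agree on $m$ coordinates and can differ only on the remaining $n-m=d-1$ coordinates, so $\dist(\mathbf u,\mathbf v)\le d-1<d$. This contradicts the definition of $d$ as the minimum distance.

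Injectivity gives $M\le |\A^{m}|=q^{\,n-d+1}$. Taking $\log_q$ yields $\kappa\le n-d+1$. The right-hand side is an integer, so the inequality survives taking ceilings: $\lceil\kappa\rceil\le n-d+1$. Rearranging gives $d\le n-\lceil\kappa\rceil+1$.

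There is no real obstacle here. The only point that needs care is the final step, where the ceiling may be taken only because $n-d+1$ is an integer. It is this integrality that makes the bound valid for non-integral dimension $\kappa$, which matters since the statement allows arbitrary $M$.
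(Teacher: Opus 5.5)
Your proof is correct: it is the standard puncturing argument. Distinct codewords stay distinct after deleting any $d-1$ coordinates, which gives $M\le q^{n-d+1}$. You are also right that the ceiling may be taken because $n-d+1$ is an integer. The paper gives no proof of its own and simply cites the result from Singleton and MacWilliams--Sloane, where this same argument appears.
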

	
	\begin{definition}
		The \emph{Singleton defect} of an $(n, M, d)_q$ code $\mathcal{C}$ is
		\[
		s = n - \lceil \kappa \rceil + 1 - d \ge 0.
		\]
		We recall that a code with defect $  s  $ is an $A^s$MDS code. When $  M=q^k  $, this reduces to the standard linear defect $  s=n-k+1-d  $.
	\end{definition}

	The Plotkin bound gives an upper bound on the size of a code when the minimum distance is large relative to the length. The following $q$-ary form underlies both the main length bound and the hierarchy of Section~\ref{sec:large_defect}.
	
	\begin{theorem}[$q$-ary Plotkin Bound \cite{Plotkin1960,SilvermanMetrization1960,MR238597}]
		\label{thm:plotkin_qary}
		Every $(n,M,d)_q$ code satisfies
		\begin{equation}\label{eq:plotkin_length}
		 n\ge \left\lceil d\,\frac{1-1/M}{1-1/q}\right\rceil.
		\end{equation}
		If, in addition, $dq>n(q-1)$, then
		\[
		 M\le \left\lfloor\frac{dq}{dq-n(q-1)}\right\rfloor.
		\]
	\end{theorem}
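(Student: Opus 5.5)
The plan is the classical double-counting argument on the sum of all pairwise distances. Let $\mathcal C$ be an $(n,M,d)_q$ code and set
\[
S=\sum_{\{\mathbf u,\mathbf v\}\subseteq\mathcal C,\ \mathbf u\neq\mathbf v}\dist(\mathbf u,\mathbf v).
\]
Each of the $\binom{M}{2}$ pairs contributes at least $d$, so $S\ge d\binom M2$. For the upper bound, count coordinate by coordinate. In coordinate $i$, let $m_{i,a}$ be the number of codewords with symbol $a$ there, so $\sum_a m_{i,a}=M$. The number of pairs that differ in coordinate $i$ is
\[
\binom M2-\sum_a\binom{m_{i,a}}2=\tfrac12\Bigl(M^2-\sum_a m_{i,a}^2\Bigr).
\]
By Cauchy--Schwarz, $\sum_a m_{i,a}^2\ge M^2/q$. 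Hence each coordinate contributes at most $\tfrac12 M^2(1-1/q)$, and
\[
S\le \tfrac12\, nM^2(1-1/q).
\]

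For the first inequality, compare the two bounds:
\[
d\,\frac{M(M-1)}2\le \frac{nM^2(q-1)}{2q},
\]
which rearranges to $n\ge d\,(1-1/M)/(1-1/q)$. Since $n$ is an integer, the ceiling follows. Equality requires every coordinate to be symbol-uniform and every pairwise distance to equal $d$; that observation will be reused in the equality theory later.

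For the second inequality, rewrite the same comparison as
\[
dq(M-1)\le n(q-1)M,
\]
that is,
\[
M\bigl(dq-n(q-1)\bigr)\le dq.
\]
When $dq>n(q-1)$ the bracket is positive, so we may divide to get $M\le dq/(dq-n(q-1))$. Since $M$ is an integer, the floor follows.

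There is no real obstacle here. The only care needed is the per-coordinate maximisation, which is a convexity statement: $\sum_a m_{i,a}^2$ is minimised at the uniform split, and that split is possible only when $q\mid M$. Integrality is not needed for the inequality itself, because the real-valued bound $M^2/q$ still holds. The key step to state cleanly is therefore the Cauchy--Schwarz bound $\sum_a m_{i,a}^2\ge M^2/q$.
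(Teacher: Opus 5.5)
Your proof is correct. The paper cites this theorem without proving it, but its proofs of Lemma~\ref{lem:2d_bound} and Lemma~\ref{lem:dim3_plotkin_sharp} (the cases $M=q^2$ and $M=q^3$) use the same double count. They tally agreements $\sum_a\binom{n_{i,a}}{2}$ with convexity, where you tally disagreements with the real-valued Cauchy--Schwarz bound; your version also handles general $M$ with $q\nmid M$.
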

	
	For binary codes, a sharper formulation that distinguishes the parity of $d$ will be needed in Section~\ref{sec:binary_refinements}.
	
	\begin{theorem}[Binary Plotkin Bound
		\cite{Plotkin1960, MacWilliamsSloane1977}]
		\label{thm:plotkin_binary}
		Let $\mathcal{C}$ be a binary $(n, M, d)_2$ code.
		\begin{enumerate}
			\item If $d$ is even and $2d > n$, then $M \le 2\left\lfloor \dfrac{d}{2d-n} \right\rfloor$.
			\item If $d$ is odd and $2d+1 > n$, then $M \le 2\left\lfloor \dfrac{d+1}{2d+1-n} \right\rfloor$.
		\end{enumerate}
	\end{theorem}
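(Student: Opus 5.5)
The statement to be proved is the binary Plotkin bound (Theorem~\ref{thm:plotkin_binary}). I would follow the classical double-counting route, obtaining part~(2) from part~(1) by puncturing.

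\textbf{Part (1): the double count.} Fix a binary $(n,M,d)_2$ code $\mathcal C$ and set
\[
S=\sum_{\{\mathbf u,\mathbf v\}\subseteq\mathcal C}\dist(\mathbf u,\mathbf v),
\]
the sum over unordered pairs. Every pair contributes at least $d$, so $S\ge \binom{M}{2}d$.

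Now count column by column. If coordinate $i$ contains $w_i$ ones, it contributes $w_i(M-w_i)$ to $S$. This is at most $\lfloor M^2/4\rfloor$.

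For $M$ even the two estimates combine to $M(M-1)d/2\le nM^2/4$. This gives $M(2d-n)\le 2d$, hence $M\le 2d/(2d-n)$ when $2d>n$.

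This is not yet the stated form $M\le 2\lfloor d/(2d-n)\rfloor$. To close the gap, I would take a code of even size, assuming $M$ even: if $M$ is odd, delete one codeword, which keeps $d$ (or at least does not lower the minimum distance). Then $M/2\le d/(2d-n)$, and since $M/2$ is an integer, $M/2\le\lfloor d/(2d-n)\rfloor$.

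If $M$ is odd, the same count uses the column bound $(M^2-1)/4$. This gives $M(M-1)d/2\le n(M^2-1)/4$, so $Md\le n(M+1)/2$ and $M\le n/(2d-n)$. Since $n/(2d-n)=2d/(2d-n)-1$, we get $M+1\le 2d/(2d-n)$, so $(M+1)/2\le\lfloor d/(2d-n)\rfloor$. Hence $M<M+1\le 2\lfloor d/(2d-n)\rfloor$.

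So both parities are handled directly, and no deletion step is needed. The step requiring the most care is exactly this integrality bookkeeping in the odd case. It is routine once the column bound $\lfloor M^2/4\rfloor$ is used instead of $M^2/4$.

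\textbf{Part (2): reduction to part (1).} For $d$ odd, I would pass to an $(n+1,M,d+1)_2$ code by appending an overall parity bit. Two words at odd distance $\delta\ge d$ then get distance $\delta+1$, while those at even distance $\delta\ge d+1$ keep their distance. So the new minimum distance is at least $d+1$, which is even.

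Apply part~(1) with $d+1$ and $n+1$. The hypothesis $2(d+1)>n+1$ is exactly $2d+1>n$. The conclusion is $M\le 2\lfloor (d+1)/(2d+1-n)\rfloor$.

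One caveat: if the extended code's minimum distance exceeds $d+1$, apply part~(1) with the value $d+1$ anyway. The double-count argument only used the inequality $\dist\ge d$, not equality, so this is legitimate.
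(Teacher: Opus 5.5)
Your proof is correct and follows the classical argument of the cited sources; the paper itself quotes this result without proof. That argument is the pairwise-distance double count with column bound $\lfloor M^2/4\rfloor$, split by the parity of $M$, followed by appending a parity bit to reduce odd $d$ to the even case via $(n,M,d)\mapsto(n+1,M,d+1)$. The only loose end is expository: deleting a codeword when $M$ is odd would by itself give only $M\le 2\lfloor d/(2d-n)\rfloor+1$, so the direct odd-$M$ computation you switch to is genuinely needed (and your part~(1) never uses that $d$ is even, which is harmless).
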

	
	For linear codes, the Griesmer bound provides a stronger lower bound on length than the Singleton bound.
	
	\begin{theorem}[Griesmer Bound \cite{Griesmer1960, SS65}]
		\label{thm:griesmer}
		For any linear $[n, k, d]_q$ code,

		\begin{equation} \label{eqn:GriesmerBound}
			n \ge g_q(k,d) := \sum_{i=0}^{k-1} \left\lceil \frac{d}{q^i} \right\rceil.
		\end{equation}

	\end{theorem}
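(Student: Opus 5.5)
The statement is the Griesmer bound for linear codes, $n\ge g_q(k,d)$. I will prove it by induction on $k$ using the residual code construction.

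For $k=1$ the bound reads $n\ge d$, which is trivial. For $k\ge2$, let $\mathcal C$ be a linear $[n,k,d]_q$ code and fix a codeword $\mathbf c$ of weight exactly $d$. After a coordinate permutation and scaling of coordinates (which preserve linearity and parameters), assume $\mathbf c=(1,\dots,1,0,\dots,0)$ with support the first $d$ positions. Define the residual code $\mathcal C'$ as the puncturing of $\mathcal C$ onto the last $n-d$ coordinates. It is linear of length $n-d$.

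The key steps are then as follows.

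\textbf{(i) Dimension of $\mathcal C'$.} I will show $\dim\mathcal C'=k-1$. Suppose a codeword $\mathbf x\notin\langle\mathbf c\rangle$ vanished on the last $n-d$ positions. Its first $d$ entries take some value $\alpha$ at least $\lceil d/q\rceil\ge1$ times. Then $\mathbf x-\alpha\mathbf c$ is nonzero, since $\mathbf x\notin\langle\mathbf c\rangle$, and has weight at most $d-\lceil d/q\rceil<d$. This is a contradiction, so the kernel of the projection is exactly $\langle\mathbf c\rangle$.

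\textbf{(ii) Minimum distance of $\mathcal C'$.} Let $\mathbf x'$ be a nonzero word of $\mathcal C'$ coming from $\mathbf x\in\mathcal C$. Choose $\alpha$ to be the most frequent symbol among the first $d$ entries of $\mathbf x$, occurring $m\ge\lceil d/q\rceil$ times. The word $\mathbf x-\alpha\mathbf c$ is nonzero, because its restriction $\mathbf x'$ is nonzero. So
\[
d\le \mathrm{wt}(\mathbf x-\alpha\mathbf c)\le (d-m)+\mathrm{wt}(\mathbf x'),
\]
which gives $\mathrm{wt}(\mathbf x')\ge m\ge\lceil d/q\rceil$. Hence $d'\ge\lceil d/q\rceil$.

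\textbf{(iii) Induction.} Apply the induction hypothesis to $\mathcal C'$ and use that $g_q(k-1,\cdot)$ is monotone in $d$:
\[
n-d\ge g_q(k-1,d')\ge\sum_{i=0}^{k-2}\bigl\lceil \lceil d/q\rceil/q^{i}\bigr\rceil=\sum_{i=1}^{k-1}\lceil d/q^{i}\rceil .
\]
The final equality uses the identity $\lceil\lceil a/b\rceil/c\rceil=\lceil a/(bc)\rceil$ for positive integers. Adding $d$ to both sides gives $n\ge g_q(k,d)$.

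The main obstacle is step (ii), together with its degenerate case in step (i). The argument must handle the case where $\mathbf x'$ is nonzero but $\mathbf x$ already agrees with a multiple of $\mathbf c$ on the support. Choosing $\alpha$ as the most frequent symbol, and using linearity to subtract $\alpha\mathbf c$ within the code, handles all cases at once. This is the only place where linearity is essential, and it is why the bound can fail for the unrestricted codes studied in the rest of the paper.
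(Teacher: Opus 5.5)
Your proof is correct. It is the classical residual-code induction: restrict to the complement of the support of a minimum-weight word, show the residual code is an $[n-d,\,k-1,\,\ge\lceil d/q\rceil]_q$ code, and apply $\lceil\lceil d/q\rceil/q^{i}\rceil=\lceil d/q^{i+1}\rceil$. The paper gives no proof of its own and simply cites Griesmer and Solomon--Stiffler, whose argument is exactly this one.
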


	The proof of the Griesmer bound relies critically on linearity, and it is known that the bound does not hold for general nonlinear codes \cite{Bellini2015Griesmer}. One aim of this paper is to identify conditions under which nonlinear codes of fixed Singleton defect respect this bound.

	Finally, we record a nonexistence theorem of Bruen and Silverman for MDS codes of maximal length, which will be invoked repeatedly in
	Section~\ref{sec:maximal_arc_bound}. In the notation of \cite{BruenSilverman1983}, an $L(n,k,r)$ space is a code of size $n^r$ and 	length $k$ over an \emph{arbitrary} alphabet of $n$ symbols in which no two codewords agree in $r$ or more positions; in our notation this is precisely an $(n', q^{k'}, d)_q$ MDS code with $q = n$, $n' = k$, and $k' = r$.  Every MDS code satisfies $n' \le q + k' - 1$ 	\cite[Lemma~5]{SilvermanMetrization1960}, the case $s=0$ of Theorem~\ref{thm:main_bound}. The following theorem concerns the equality case, that is, length-maximal MDS codes.

	\begin{theorem}[Bruen--Silverman {\cite[Theorems~4 and~6]{BruenSilverman1983}}]
		\label{thm:bruen_silverman}
		Let $q > 2$ and $k \ge 3$ be integers, and suppose a $(q+k-1,\, q^k,\, q)_q$ MDS code exists.
		\begin{enumerate}
			\item Then $4 \mid q$; in particular no such code exists for odd $q$.
			\item If moreover $k \ge 4$, then $36 \mid q$.
		\end{enumerate}
	\end{theorem}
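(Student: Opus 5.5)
The plan is to reduce to small dimension by shortening, pin down the distance distributions exactly with orthogonal-array counting, and then extract divisibility conditions from integrality of derived counts. First, shortening a $(q+k-1,q^k,q)_q$ MDS code at any coordinate and symbol gives a $(q+k-2,q^{k-1},q)_q$ code, which is again MDS and length-maximal. Shortening $k-3$ times therefore yields a $(q+2,q^3,q)_q$ code. Part~(1) then only needs to be shown for $k=3$. For Part~(2) it suffices to treat $k=4$, i.e.\ a $(q+3,q^4,q)_q$ code.

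Second, I use that an MDS code of dimension $k$ with $M=q^k$ is an orthogonal array of strength $k$ and index $1$: two codewords agreeing in $k$ positions coincide, and counting then forces every $k$-tuple in every $k$ positions to occur exactly once. Fix a codeword $\mathbf c$, assumed to be $\mathbf 0$. For $k=3$ the number of other codewords agreeing with $\mathbf c$ in a given set of $t\le 3$ positions is $q^{3-t}-1$. Inclusion--exclusion over agreement sets of size at most $2$ then gives the inner distribution exactly:
\[
A_q=\tbinom{q+2}{2}(q-1),\qquad A_{q+1}=0,\qquad A_{q+2}=\tfrac{q(q-1)^2}{2}.
\]
In the same way, for $k=4$ I obtain $A_q,A_{q+1},A_{q+2},A_{q+3}$ as explicit polynomials in $q$.

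Third, I derive divisibility from finer integrality conditions; this is the main obstacle.

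\emph{Case $k=3$.} Since $A_{q+1}=0$, two codewords agreeing in one coordinate always agree in exactly one further coordinate. Fix a coordinate $i$ and a symbol $a$. The $q^2$ codewords with $a$ in position $i$ then form a net of order $q$ with $q+1$ parallel classes, that is, an affine plane. Each codeword outside this set agrees with it in a structured way. Relative to a fixed codeword, I would show that the positions where it agrees with the others pair up under $A_{q+1}=0$, in the same way that the tangent-free condition in the proof of the hyperoval theorem forces $q$ even. I would then refine the count, for instance the number of codewords disjoint from two fixed disjoint codewords, to get an expression with denominator $4$. Requiring this to be an integer should force $4\mid q$.

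\emph{Case $k=4$.} The analogous count of codewords with prescribed agreement patterns relative to two or three fixed codewords gives rational expressions in $q$ with denominators involving $2$ and $3$. Integrality combined with $4\mid q$ should yield $9\mid q$, hence $36\mid q$.

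What I expect to be hardest is choosing which configuration to count so that a nontrivial denominator appears. The first-order distribution above is integral for every even $q$, so second-order (pair-relative) distributions are needed. I would try first to compute, using the strength-$k$ orthogonal-array property for the joint distribution of agreements with two fixed codewords at each possible distance, the number of codewords at full distance from both, and then to read off its denominator.
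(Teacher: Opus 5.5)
The paper gives no proof of this statement; it imports it from Bruen--Silverman, so your outline has to stand on its own. Your reduction by shortening is correct, and so are the inner distributions $A_q=\binom{q+2}{2}(q-1)$, $A_{q+1}=0$, $A_{q+2}=q(q-1)^2/2$. But the proposal stops exactly where the theorem's content lies, and the configuration you propose to count cannot deliver $4\mid q$. Every agreement number with a fixed codeword lies in $\{0,2\}$, so the joint distribution relative to any two fixed codewords is determined by moments of degree at most two. That distribution is integral for every even $q$. For disjoint $\mathbf 0,\mathbf x$, the number of codewords disjoint from both is $q(q-2)(q-3)/4$, and the number meeting both twice is $q(q+1)(q+2)/4$. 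For $\mathbf 0,\mathbf x$ agreeing in two coordinates, the four classes (twice/none, none/twice, twice/twice, none/none) have sizes $q^2(q-1)/4$, $q^2(q-1)/4$, $(q^3+5q^2-2q-8)/4$ and $q(q-1)(q-2)/4$. These counts only exclude odd $q$. So does your hyperoval-style pairing, which yields a dual hyperoval in the completed affine plane. Nothing at second order detects $q\equiv 2\pmod 4$.

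The missing idea is to go to third order. For three fixed codewords, the product $abc$ of the three agreement numbers of any further codeword is $0$ or $8$, and strength $3$ determines $\sum abc$. Take $\mathbf 0,\mathbf x$ agreeing in two coordinates and $\mathbf y$ disjoint from both; such $\mathbf y$ exist for even $q>2$ by the count $q(q-1)(q-2)/4$ above. Then exactly $q(q+1)(q+4)/8$ codewords meet each of the three in exactly two coordinates. For $q=2m$ this equals $m(m+2)(2m+1)/2$, which is an integer only if $m$ is even, giving $4\mid q$.

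For part~(2), the remark that ``denominators involving $2$ and $3$ should yield $9\mid q$'' is not yet an argument. What works is this: in dimension four the agreements lie in $\{0,1,3\}$, so $\binom a2\binom b2=9\,[a=b=3]$, and the degree-four moment gives pair counts with denominator $9$. These counts are
\begin{itemize}
\item $q(q+1)(q+2)(q+3)/36$ for disjoint pairs;
\item $q(q+1)(q+2)(q+7)/36$ for pairs agreeing once;
\item $\bigl(q(q+1)(q+2)(q+3)/4+3q(q+1)(q+2)-15q-18\bigr)/9$ for pairs agreeing three times.
\end{itemize}
All three pair types occur for $q>2$, and together their integrality excludes every residue of $q$ modulo $9$ except $0$. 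Combined with $4\mid q$ from part~(1), this gives $36\mid q$. Until these specific counts are identified and verified, neither part is proved.
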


	\begin{remark}\label{rem:bs_nonlinear}
		Theorem~\ref{thm:bruen_silverman} holds for every integer $q > 2$, and the hypothetical extremal codes of part~(2) are necessarily genuinely 	nonlinear (no prime power is divisible by $36$). Whether any code attaining the hypotheses of part~(2) exists remains open. The smallest candidate parameters are $(39,\,36^4,\,36)_{36}$.
	\end{remark}

	\subsection{Symbol-Uniformity}
	
	A key structural property of optimal codes, which will emerge naturally from our main bound, is the following.
	
	\begin{definition}
		A code $\mathcal{C} \subseteq \A^n$ is \emph{symbol-uniform in 	coordinate $j$} if every symbol $a \in \A$ appears equally often, that is,
		\[
		|\{ \mathbf{c} \in \mathcal{C} \mid c_j = a \}| = \frac{M}{q}
		\quad \text{for each } a \in \A.
		\]
		If $\mathcal{C}$ is symbol-uniform in every coordinate, it is called \emph{symbol-uniform}.
	\end{definition}
	
	Note that symbol-uniformity in any coordinate requires $q \mid M$.

	\subsection{Orthogonal Arrays}

	An $N\times n$ array over a $q$-symbol alphabet is an $\operatorname{OA}(N,n,q,t)$ of strength $t$ if, in every choice of $t$ columns, each ordered $t$-tuple occurs exactly $N/q^t$ times \cite{HedayatSloaneStufken1999}. Thus symbol-uniformity is precisely strength one. We shall show that a length-maximal code of dimension $k$ is an orthogonal array of strength at least $k-1$ (Corollary~\ref{cor:orthogonal_array}). This observation
	is leveraged later to determine the inner distance distribution.

	\subsection{Shortening}
	
	A standard operation that preserves many code properties is shortening. We 	use it extensively in our inductive arguments.
	
	\begin{definition}
		Let $\mathcal{C}$ be an $(n, M, d)_q$ code. For a coordinate $i \in \{1, \dots, n\}$ and a symbol $a \in \A$, the \emph{shortened code} is
		\[
		\mathcal{C}_i^{(a)} = \bigl\{ (c_1, \dots, c_{i-1}, c_{i+1}, \dots, c_n) \mid \mathbf{c} \in \mathcal{C},\ c_i = a \bigr\}.
		\]
	\end{definition}
	\begin{lemma}[Properties of Shortening]\label{lem:shortening}
		Let $\mathcal{C}$ be an $(n, M, d)_q$ $A^s$MDS code with $q^k \le M < q^{k+1}$, and let $\mathcal{C}_i^{(a)}$ be an $(n-1, M', d')_q$ $A^{s'}$MDS code of dimension $\kappa'$.
		\begin{enumerate}
			\item $M' = |\{ \mathbf{c} \in \mathcal{C} \mid c_i = a \}|$, and $d' \ge d$.
			\item The symbol $a$ can be chosen so that 	$M' \ge \lceil M/q \rceil$; consequently there is an $(n-1,\lceil M/q \rceil,d')$ A$^{s'}$MDS code $\mc{C}'\subseteq \mc{C}_i^{(a)}$, with $d'\ge d$ and  $s' \le s$.
			\label{item:shortening_size}
			\item If $\mathcal{C}$ is symbol-uniform in coordinate $i$, then $M' = M/q$.
			\item If $\mathcal{C}$ is symbol-uniform, then $a$ and $i$ can be chosen so that $s' = s$ (and hence $d' = d$).
		\end{enumerate}
	\end{lemma}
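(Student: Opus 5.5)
The four items are elementary counting statements, and I would verify them in order. The only real content is choosing the right pair of codewords in item~4 and keeping careful track of the ceilings $\lceil\kappa\rceil$ in the defect computations. Throughout, write $\kappa=\log_q M$ and, for any subcode of a shortening, $\kappa'$ for its dimension.

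For item~1, I would observe that deleting coordinate $i$ is injective on the set $\{\mathbf c\in\mathcal C: c_i=a\}$. Two such codewords agree in position $i$, so their distance is unchanged by the deletion. Hence $M'$ equals the size of this set, and every pairwise distance in $\mathcal C_i^{(a)}$ is a distance of $\mathcal C$, giving $d'\ge d$.

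For item~2, I would apply pigeonhole to the partition of $\mathcal C$ by the value in coordinate $i$: some symbol $a$ has $M'\ge\lceil M/q\rceil$. Taking any subset $\mathcal C'$ of size exactly $\lceil M/q\rceil$ keeps minimum distance $d'\ge d$. This needs $\lceil M/q\rceil\ge2$, which holds when $k\ge2$ and which I would note as the implicit assumption. Its dimension satisfies $\kappa'=\log_q\lceil M/q\rceil\ge\kappa-1$, so $\lceil\kappa'\rceil\ge\lceil\kappa\rceil-1$. Therefore
\[
s'=(n-1)-\lceil\kappa'\rceil+1-d'\le (n-1)-(\lceil\kappa\rceil-1)+1-d=s.
\]
Item~3 is then immediate from item~1 and the definition of symbol-uniformity.

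For item~4, which is the only step needing a choice, I would pick two codewords $\mathbf u,\mathbf v$ at distance exactly $d$. They agree in $n-d=\lceil\kappa\rceil-1+s$ positions. This is at least $1$ because $M\ge q^k$ with $k\ge 2$ (the standing assumption) gives $\lceil\kappa\rceil\ge2$. Choose $i$ to be such a position and $a=u_i=v_i$. Both words survive in $\mathcal C_i^{(a)}$ at distance $d$, so $d'\le d$, and combined with item~1 this gives $d'=d$. By item~3, $M'=M/q$, so $\kappa'=\kappa-1$ exactly and $\lceil\kappa'\rceil=\lceil\kappa\rceil-1$. Substituting into the defect formula gives $s'=(n-1)-\lceil\kappa\rceil+2-d=s$.

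I expect the main obstacle to be the bookkeeping: ensuring the agreement set is nonempty in item~4 and that the ceiling drops by exactly one. Both follow from the hypothesis $q^k\le M<q^{k+1}$ together with $k\ge2$.
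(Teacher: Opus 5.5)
Your proposal is correct and follows essentially the same route as the paper. Items 1--3 are handled by direct counting. For item 4 you shorten at a coordinate where a minimum-distance pair agrees, with $a$ their common symbol there, so that $d'=d$. Your version also spells out two points the paper leaves implicit: the ceiling bookkeeping for $\lceil\kappa'\rceil$, and the use of $M'=M/q$ to pass from $d'=d$ to $s'=s$.
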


	\begin{proof}
		Items (1)--(3) follow directly from the definitions and simple counting arguments. For item (4), let $\mathbf{x}, \mathbf{y} \in \mathcal{C}$ be a pair at minimum distance $d$, and let $i$ be a coordinate in which $x_i = y_i$; such a coordinate exists, since $d \le n - k + 1 < n$ when $k \ge 2$. Shortening at coordinate $i$ with $a = x_i$ retains both $\mathbf{x}$ and $\mathbf{y}$, whose distance is unchanged, so $d' = d$ and hence $s' = s$.
	\end{proof}
	
	With these tools in place, we turn to the main bound of the paper.
	
	\section{Length-Maximal Codes}
	\label{sec:maximal_arc_bound}
	
	In this section we prove our main upper bound on the length of a code in terms of its alphabet size $q$, its (integer) dimension $k = \log_q M$, and its Singleton defect $s$. We then show that codes achieving this bound possess a rigid combinatorial structure: they are necessarily symbol-uniform and 	their pairwise distances are sharply constrained.
	
	\subsection{The Base Case: Codes of Dimension 2}
	\label{sec:base_case}
	
	We begin with codes of size $M = q^2$.
	
	\begin{lemma}\label{lem:2d_bound}
		If $\mathcal{C}$ is an $(n, q^2, d)_q$ code with Singleton defect 	$s = n - d - 1$, then
		\[
		n \le (s+1)(q+1).
		\]
		Moreover, equality holds if and only if $\mathcal{C}$ is symbol-uniform and equidistant, that is, every pair of distinct codewords agrees in
		exactly $s+1$ coordinates.
	\end{lemma}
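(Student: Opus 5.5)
We prove the inequality by double counting agreements, which is the standard proof of the $q$-ary Plotkin bound, and read off the equality conditions from the two inequalities used. Let $M=q^2$, and let
\[
T=\sum_{\{\mathbf{x},\mathbf{y}\}}\bigl(n-\dist(\mathbf{x},\mathbf{y})\bigr)
\]
be the total number of agreements, summed over the $\binom{M}{2}$ unordered pairs of distinct codewords.

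First we bound $T$ from above. Every pair is at distance at least $d$, so it agrees in at most $n-d=s+1$ coordinates. Hence
\[
T\le \binom{q^2}{2}(s+1),
\]
with equality if and only if every pair agrees in exactly $s+1$ coordinates, i.e. the code is equidistant.

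Next we bound $T$ from below, one coordinate at a time. Fix a coordinate $j$ and let $m_a$ be the number of codewords with $c_j=a$, so that $\sum_a m_a=q^2$. Coordinate $j$ contributes $\sum_a\binom{m_a}{2}$ agreements. By convexity (Cauchy--Schwarz), this is at least $q\binom{q}{2}$, with equality if and only if $m_a=q$ for all $a$, i.e. the code is symbol-uniform in coordinate $j$. Summing over coordinates gives
\[
T\ge nq\binom{q}{2}.
\]

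Combining the two bounds gives
\[
n\,\frac{q^2(q-1)}{2}\le \frac{q^2(q^2-1)}{2}(s+1),
\]
which simplifies to $n\le (s+1)(q+1)$. This is the same as the Plotkin form \eqref{eq:plotkin_length} applied with $M=q^2$ and $d=n-s-1$, so one could alternatively cite Theorem~\ref{thm:plotkin_qary} for the inequality. We do the count directly because we need its equality cases.

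For the equivalence:
\begin{itemize}
\item If $n=(s+1)(q+1)$, the chain of inequalities is tight, so both bounds on $T$ are equalities. Equality in the lower bound forces symbol-uniformity in every coordinate, and equality in the upper bound forces every pair to agree in exactly $s+1$ coordinates.
\item Conversely, if the code is symbol-uniform and equidistant with $s+1$ agreements per pair, then both bounds are equalities. Equating them yields $n=(s+1)(q+1)$.
\end{itemize}

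There is no real obstacle. The only care needed is that the equality characterization comes from tracking both inequalities simultaneously. In particular, "equidistant" must mean every pair is at distance exactly $d$, which is automatic once every pair agrees in exactly $s+1=n-d$ positions.
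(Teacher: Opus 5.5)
Your proposal is correct and is essentially the paper's own proof. Both double-count agreements, bounding the total above by $\binom{q^2}{2}(s+1)$ via the minimum distance and below by $nq\binom{q}{2}$ via per-coordinate convexity, and both read off the equality characterization from tightness of the two inequalities.
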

	
	\begin{proof}
		Consider the complete graph on the $q^2$ codewords of $\mathcal{C}$. For distinct codewords $\mathbf{x}, \mathbf{y}$, let $a(\mathbf{x},\mathbf{y})$ denote the number of coordinates in which they agree. Since any two distinct codewords agree in at most $n-d = s+1$ positions,
		\[
		\sum_{\{\mathbf{x},\mathbf{y}\}} a(\mathbf{x},\mathbf{y}) \le \binom{q^2}{2}(s+1).
		\]
		
		On the other hand, count the agreements coordinate-wise. For each coordinate $i$, let $n_{i,a}$ be the number of codewords with symbol $a\in\A$ in position $i$, so $\sum_a n_{i,a}=q^2$. The contribution of coordinate $i$ is
		\[
		A_i = \sum_{a\in\A} \binom{n_{i,a}}{2}.
		\]
		By convexity of $\binom{\cdot}{2}$, $A_i$ is minimised when $n_{i,a}=q$ for all $a$, yielding
		\[
		A_i \ge q\binom{q}{2} = \frac{q^2(q-1)}{2},
		\]
		with equality if and only if $\mathcal{C}$ is symbol-uniform in coordinate $i$.
		
		Summing over all $n$ coordinates and combining, we obtain
		\begin{equation}\label{eqn:2d_double_count}
			n \cdot \frac{q^2(q-1)}{2} \le \sum_{i=1}^n A_i \le \frac{q^2(q^2-1)}{2}(s+1).
		\end{equation}
		
		This immediately yields $n \le (s+1)(q+1)$.
		
		Equality in \eqref{eqn:2d_double_count} forces $A_i = \frac{q^2(q-1)}{2}$ for every $i$, which by convexity requires symbol-uniformity in every coordinate. It also forces $a(\mathbf{x},\mathbf{y}) = s+1$ for every pair of distinct codewords (equidistance). Conversely, if $\mathcal{C}$ is symbol-uniform and equidistant with agreement count exactly $s+1$, then equality holds throughout \eqref{eqn:2d_double_count}, and thus $n = (s+1)(q+1)$.
	\end{proof}
	
	\begin{corollary}\label{cor:2d_LM_GB}
	Let $q$ be a prime power. With $N_q(q^2,d)$, $S_q(2,d)$, and $L_q(2,d)$ as defined in the Introduction, $N_q(q^2,d)=S_q(2,d)=L_q(2,d)= d+\left\lceil \frac{d}{q}\right\rceil=g_q(2,d)$.
	\end{corollary}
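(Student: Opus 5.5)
Since each of $N_q$, $S_q$, $L_q$ is a minimum over a smaller class than the previous one, we have $N_q(q^2,d)\le S_q(2,d)\le L_q(2,d)$ at once: linear codes are systematic after a coordinate permutation, and systematic codes are unrestricted. It therefore suffices to prove two inequalities: the lower bound $N_q(q^2,d)\ge d+\lceil d/q\rceil$, valid for all codes, and the upper bound $L_q(2,d)\le d+\lceil d/q\rceil$, which requires a linear construction. The identity $g_q(2,d)=\lceil d\rceil+\lceil d/q\rceil=d+\lceil d/q\rceil$ is immediate from the definition.

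For the lower bound we apply Lemma~\ref{lem:2d_bound}. Take an $(n,M,d)_q$ code with $M\ge q^2$ and pass to a subcode of exactly $q^2$ words; this does not decrease the minimum distance. If the subcode's minimum distance $d''$ is larger than $d$, we still only need a bound in terms of $d$. Writing $s=n-1-d''$, Lemma~\ref{lem:2d_bound} gives $n\le (n-d'')(q+1)$, which rearranges to $qn\ge (q+1)d''\ge (q+1)d$. Hence $n\ge d+d/q$, and since $n$ is an integer, $n\ge d+\lceil d/q\rceil$. One could instead use the Plotkin bound \eqref{eq:plotkin_length} with $M=q^2$, which yields the same quantity $d(1+1/q)$. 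The argument needs $d\le n$, which holds trivially.

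For the upper bound we construct a linear $[d+\lceil d/q\rceil,2,d]_q$ code. Write $d=tq-r$ with $0\le r<q$ and $t=\lceil d/q\rceil$, so the target length is $n=t(q+1)-r$. Take the $q+1$ points of $\mathrm{PG}(1,q)$, each repeated $t$ times; this gives a $[t(q+1),2,tq]_q$ code, because each nonzero codeword vanishes on exactly one point and hence on exactly $t$ coordinates. Now delete $r$ coordinate columns, using distinct points, which is possible since $r\le q-1<q+1$. A codeword whose zero point is among the deleted ones has weight $tq-(r-1)$, and every other codeword has weight $tq-r$ or more. So the minimum distance is exactly $tq-r=d$, and it is attained by a codeword vanishing on an undeleted point. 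The dimension remains $2$, since at least two distinct points survive when $t\ge1$. The case $t=1$ needs a separate check: then $d=q-r\ge1$, $n=d+1$, and we keep $q+1-r\ge2$ distinct points.

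The main point to watch is the rounding in the lower bound: it must give $\lceil d/q\rceil$ exactly, and the subcode reduction must remain valid when the minimum distance increases. Neither step is a real obstacle. Combining the chain of inequalities with the two bounds proves the corollary.
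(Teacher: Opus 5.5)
Your proposal is correct and follows the paper's proof: the lower bound comes from Lemma~\ref{lem:2d_bound} rearranged as $qn\ge(q+1)d$, it is combined with the chain $N_q(q^2,d)\le S_q(2,d)\le L_q(2,d)$, and the upper bound is a linear $[g_q(2,d),2,d]_q$ code. The only difference is that you build that code explicitly, by taking each point of $\mathrm{PG}(1,q)$ $\lceil d/q\rceil$ times and deleting one copy of each of $r$ distinct points, whereas the paper cites Solomon--Stiffler; your version is self-contained.
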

	\begin{proof}
		From Lemma~\ref{lem:2d_bound}, every $(n,q^2,d)_q$ $A^s$MDS code satisfies $n\le (s+1)(q+1)=(n-d)(q+1)$, or equivalently $n\ge d+\left\lceil \frac{d}{q}\right\rceil =g_q(2,d)$. As this lower bound applies to the nonlinear, systematic, and linear classes alike, $g_q(2,d)\le N_q(q^2,d)\le S_q(2,d)\le L_q(2,d)$. Conversely, for every $d$ there is a linear $[\,g_q(2,d),\,2,\,d\,]_q$ code \cite{SS65}, so $L_q(2,d)=g_q(2,d)$ and the three lengths coincide.  The length-maximal codes of Corollary~\ref{cor:2d_existence} realise the extremal cases $d=(s+1)q$.
	\end{proof}
	\begin{remark}
		The binary case of Corollary~\ref{cor:2d_LM_GB} was established in \cite{Guerrini2015Optimal}.
	\end{remark}

	\subsubsection{The Resolvable-Design Correspondence}
	\label{sec:resolvable_correspondence}

	The equality case of Lemma~\ref{lem:2d_bound} places us in classical territory. A length-maximal code of dimension two is an equidistant code with $q^2$ codewords meeting the Plotkin bound \eqref{eq:plotkin_length} with equality, and Semakov and Zinov'ev \cite{SemakovZinoviev1968} established that such codes are equivalent to resolvable balanced incomplete block designs. Proposition~\ref{prop:2d_design_equiv} below is the specialisation of their correspondence to the present setting, organised by the Singleton defect and stated for multidesigns (since repeated parallel classes arise as soon as $s \ge 1$). We include the short proof for completeness, as we shall make reference in the sequel.

	We allow repeated blocks throughout this subsection. A \emph{resolvable} $2$-$(v,K,\lambda)$ \emph{multidesign} is a multiset of $K$-subsets 	(\emph{blocks}) of a $v$-set in which every pair of points lies in exactly $\lambda$ blocks, counted with multiplicity, together with a partition of the block occurrences into \emph{parallel classes}, each class partitioning the point set \cite{BethJungnickelLenz1999}. Repetitions are necessary here because distinct code coordinates may induce the same partition. The letter $K$ denotes the block size, not the code dimension $k$.

	\begin{proposition}\label{prop:2d_design_equiv}
		For $q \ge 2$ and $s \ge 0$, the length-maximal $(n, q^2, d)_q$ $A^s$MDS codes correspond, up to code equivalence, to the resolvable $2$-$(q^2,\, q,\, s+1)$ multidesigns having $n$ parallel classes.
	\end{proposition}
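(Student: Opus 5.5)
The plan is to build the two directions of the correspondence explicitly and check that each reverses the other up to code equivalence. Lemma~\ref{lem:2d_bound} does the main work.

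\emph{From codes to designs.} Let $\mathcal C$ be a length-maximal $(n,q^2,d)_q$ $A^s$MDS code, so $n=(s+1)(q+1)$. Take the $q^2$ codewords as points. For each coordinate $i$ and symbol $a\in\A$, form the block $B_{i,a}=\{\mathbf c\in\mathcal C : c_i=a\}$. By Lemma~\ref{lem:2d_bound}, $\mathcal C$ is symbol-uniform, so every block has size $q$. For fixed $i$, the blocks $B_{i,a}$ partition the point set, and so form a parallel class. This gives $n$ parallel classes. We keep repeated blocks as separate occurrences, because distinct coordinates may induce the same partition. A pair of distinct points $\mathbf x,\mathbf y$ lies in $B_{i,a}$ exactly when $x_i=y_i=a$. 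Hence the number of blocks containing the pair equals the agreement count $a(\mathbf x,\mathbf y)$, which is $s+1$ by the equidistance part of Lemma~\ref{lem:2d_bound}. The result is a resolvable $2$-$(q^2,q,s+1)$ multidesign with $n$ parallel classes.

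\emph{From designs to codes.} Start with a resolvable $2$-$(q^2,q,s+1)$ multidesign with parallel classes $\mathcal P_1,\dots,\mathcal P_n$. In each class, label its $q$ blocks bijectively by the symbols of $\A$. Map each point $p$ to the word whose $i$-th entry is the label of the block of $\mathcal P_i$ containing $p$. Two distinct points get the same symbol in coordinate $i$ exactly when they share a block of $\mathcal P_i$. So their words agree in exactly $s+1$ coordinates. Since $s+1<n$, the map is injective, and we obtain $q^2$ codewords. The code is symbol-uniform, since each block has $q$ points, and it is equidistant with distance $n-s-1$.

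The step needing care is computing $n$, because the equality case of Lemma~\ref{lem:2d_bound} should not be assumed for the constructed code. Instead, count pairs of points through blocks directly. Each class contributes $q\binom q2$ pairs, so
\[
n\,q\binom q2=(s+1)\binom{q^2}{2},
\]
which gives $n=(s+1)(q+1)$. Then $d=n-s-1$, so the defect is $s$, and the code is length-maximal. The same count, or the converse in Lemma~\ref{lem:2d_bound}, confirms this.

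\emph{Well-definedness up to equivalence.} There are three choices to account for. The labelling of blocks within each class is a symbol permutation in that coordinate. Reordering the parallel classes is a positional permutation. In the other direction, the symbols only name blocks, so applying a code equivalence to $\mathcal C$ produces the same multidesign up to relabelling points and reordering classes. The two constructions are mutually inverse: the blocks of the code built from a design are the original blocks, and the code rebuilt from $\mathcal C$'s design is $\mathcal C$ up to symbol labelling. The main obstacle is being precise about what ``correspond'' means, given repeated blocks and the freedom in labelling. I will handle it by treating parallel classes as an indexed family of partitions, matched with coordinates, and treating symbol labels as arbitrary.
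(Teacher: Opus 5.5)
Your proposal is correct and follows the same route as the paper. In both, Lemma~\ref{lem:2d_bound} supplies symbol-uniformity and equidistance, each coordinate becomes a parallel class, and pairwise agreements become pair–block incidences. Your explicit pair count giving $n=(s+1)(q+1)$, the injectivity check, and the bookkeeping for equivalence are details the paper leaves implicit; it records the length only afterwards, as the design identity $r=\lambda(v-1)/(K-1)$.
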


	\begin{proof}
		Let $\mc{C}$ be such a code. By Lemma~\ref{lem:2d_bound}, $\mc{C}$ is symbol-uniform and equidistant, every pair of codewords agreeing in exactly $s+1$ coordinates. 
		Take the $q^2$ codewords as points. Each coordinate $i$ sorts the points into $q$ classes according to the symbol carried there. By symbol-uniformity each class has size $q$, so the coordinate $i$ provides a parallel class of $q$ blocks. 
		Two codewords agree in coordinate $i$ precisely when their corresponding points belong to a common block occurrence of that class. Equidistance implies that any two points lie together in exactly $s+1$ block occurrences. Hence we have a resolvable $2$-$(q^2,q,s+1)$ multidesign with one parallel class per coordinate. Conversely, labelling each point of such a multidesign by the block it occupies in each class yields a symbol-uniform, equidistant $(n,q^2,d)_q$ code with the stated parameters.
	\end{proof}

	The length formula $n = (s+1)(q+1)$ is, from this viewpoint, simply the design identity $r = \lambda(v-1)/(K-1)$ for the number of parallel classes: $r = (s+1)(q^2-1)/(q-1) = (s+1)(q+1)$, whence also $n \equiv s+1 \pmod q$.

	Proposition~\ref{prop:2d_design_equiv} converts the existence question for dimension-$2$ length-maximal codes into a question about resolvable multidesigns.

	\begin{corollary}\label{cor:2d_existence}
		Let $q \ge 2$ and $s \ge 0$.
		\begin{enumerate}
			\item For $s = 0$, a length-maximal code exists if and only if an affine plane of order $q$ exists. Moreover, the code is equivalent to a linear code if and only if the corresponding affine plane is Desarguesian. 
			\item If $q$ is a prime power, a length-maximal $(n, q^2, d)_q$ $A^s$MDS code exists for every $s \ge 0$.
			\item If a non-Desarguesian affine plane of order $q$ exists, then a \emph{genuinely nonlinear} length-maximal $(n, q^2, d)_q$ $A^s$MDS code exists for every $s \ge 0$. The least such $q$ is $9$.
		\end{enumerate}
	\end{corollary}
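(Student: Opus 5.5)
The plan is to reduce all three parts to Proposition~\ref{prop:2d_design_equiv}. Length-maximal dimension-two codes are the same thing as resolvable $2$-$(q^2,q,s+1)$ multidesigns with $(s+1)(q+1)$ parallel classes. The key observation is that such a multidesign can be obtained by taking $s+1$ copies of a resolvable $2$-$(q^2,q,1)$ design. A resolvable $2$-$(q^2,q,1)$ design is exactly an affine plane of order $q$, with its $q+1$ parallel classes of lines. Repeating each parallel class $s+1$ times multiplies $\lambda$ by $s+1$ and keeps resolvability. In code language, this is just repeating each coordinate of the MDS $(q+1,q^2,q)_q$ code $s+1$ times. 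The agreement count per pair goes from $1$ to $s+1$, the length is $(s+1)(q+1)$, and $d=(s+1)q$.

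For part (1), with $s=0$ we have $\lambda=1$. A $2$-$(q^2,q,1)$ design is automatically an affine plane of order $q$, with resolvability given by parallelism. Conversely, any affine plane has $q+1$ parallel classes. So existence of the code is equivalent to existence of the plane. For the linearity statement, first take a linear $[q+1,2,q]_q$ code. Its coordinate partitions of $\F_q^2$ are the cosets of the kernels of $q+1$ linear functionals, and these are the lines of $\mathrm{AG}(2,q)$, so the plane is Desarguesian. Conversely, labelling the Desarguesian plane by the functionals $x\mapsto x_2-mx_1$ and $x\mapsto x_1$ gives a linear code. We also need that equivalent codes give isomorphic planes. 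Symbol permutations do not change the partitions, and coordinate permutations only reorder the classes. Hence a code equivalent to a linear one yields $\mathrm{AG}(2,q)$, up to relabelling points by the codeword bijection. This bookkeeping is the only delicate step: we must check that "equivalent to linear" translates precisely into "isomorphic to $\mathrm{AG}(2,q)$". The subtle direction is that an isomorphism of planes induces a code equivalence. It does, because relabelling points is just a reindexing of codewords, and relabelling blocks within a class is a symbol permutation.

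Part (2) follows at once from the repetition construction applied to $\mathrm{AG}(2,q)$, that is, the linear code repeated $s+1$ times.

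For part (3), apply the repetition construction to a non-Desarguesian plane $\Pi$. We then argue that the resulting code is not equivalent to a linear one. Suppose it were equivalent to a linear code $\mathcal{L}$. Each coordinate partition of $\mathcal{L}$ is the coset partition of some linear functional kernel. The collection of distinct partitions is invariant under equivalence, and for the repeated code this collection is exactly the $q+1$ parallel classes of $\Pi$. So the lines of $\Pi$ would be cosets of $1$-dimensional subspaces of $\F_q^2$, and $\Pi\cong\mathrm{AG}(2,q)$, a contradiction. Finally, the least order of a non-Desarguesian affine plane is $9$. This holds because every projective plane of order $\le 8$ is Desarguesian (orders $2,3,4,5,7,8$ are classical results, and order $6$ does not exist by Bruck--Ryser), while a Hall plane of order $9$ exists.
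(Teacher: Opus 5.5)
Your proof is correct and has the same skeleton as the paper's. Everything is reduced to Proposition~\ref{prop:2d_design_equiv}. Parts (1) and (2) come from identifying resolvable $2$-$(q^2,q,1)$ designs with affine planes and from repeating each parallel class $s+1$ times. The minimality of $9$ rests on the classification of planes of order at most $8$, plus an explicit non-Desarguesian plane of order $9$.

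Where you genuinely differ is the linearity question in parts (1) and (3). The paper does not argue this directly. It regards the code as a net code and cites three results of \cite{AldersonNetCodes2008}:
\begin{itemize}
\item a net code is equivalent to a linear code if and only if its net is Desarguesian (Theorem~2.5);
\item equivalent net codes are based on isomorphic nets (Theorem~2.4);
\item repeating coordinates does not change the underlying net (Lemma~2.1).
\end{itemize}

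You instead prove the special cases you need by hand:
\begin{itemize}
\item the coordinate partitions of a two-dimensional linear code are the coset partitions of kernels of linear functionals on $\F_q^2$;
\item the set of distinct coordinate partitions, transported along the codeword bijection, is an invariant of code equivalence, so the multiplicity $s+1$ is invisible;
\item $q+1$ distinct such partitions must exhaust the one-dimensional subspaces of $\F_q^2$, so the point bijection is an isomorphism $\Pi\cong\mathrm{AG}(2,q)$.
\end{itemize}

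Your route is self-contained and shows transparently why repetition is harmless. The paper's route is shorter and places the statement inside the general theory of net codes of arbitrary degree and index.

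One small detail is worth stating explicitly: the functionals are nonzero. This holds because every transported partition has $q$ blocks, by symbol-uniformity.
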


	\begin{proof}
(1) A resolvable $2$-$(q^2,q,1)$ design is precisely an affine plane of order $q$, so the first assertion follows from Proposition~\ref{prop:2d_design_equiv}. Under this correspondence, the associated code is an index-$1$ net code based on that affine plane. By \cite[Theorem~2.5]{AldersonNetCodes2008}, such a code is equivalent to a linear code if and only if its underlying net is Desarguesian. Since the net has degree $q+1$, this is equivalent to the affine plane being isomorphic to $\mathrm{AG}(2,q)$.\\
		(2) For prime-power $q$ an affine plane of order $q$ exists, giving a resolvable $2$-$(q^2, q, 1)$ design. Repeating each of its parallel classes $s+1$ times produces a resolvable $2$-$(q^2, q, s+1)$ multidesign, equivalent by Proposition~\ref{prop:2d_design_equiv} to the required code. \\
		(3) Let $\pi$ be a non-Desarguesian affine plane of order $q$, and apply the construction of~(2), repeating each parallel class $s+1$ times. By Proposition~\ref{prop:2d_design_equiv}, the resulting multidesign corresponds to a length-maximal code $\mc C$. In the terminology of \cite{AldersonNetCodes2008}, $\mc C$ is a net code of uniform index $s+1$ based on $\pi$; repetition does not change its underlying net \cite[Lemma~2.1]{AldersonNetCodes2008}.
		
		If $\mc C$ were equivalent to a linear code, then it would be a Desarguesian net code by \cite[Theorem~2.5]{AldersonNetCodes2008}. Since equivalent net codes are based on isomorphic nets \cite[Theorem~2.4]{AldersonNetCodes2008}, this would imply
		\[
		\pi\cong\mathrm{AG}(2,q),
		\]
		contrary to the choice of $\pi$. Hence $\mc C$ is not equivalent to a linear code.

%

		The least order of a non-Desarguesian affine plane is $9$. The first example was constructed by Veblen and Maclagan-Wedderburn \cite{VeblenMaclaganWedderburn1907}; minimality follows from the classification through order 8, completed by Hall, Swift, and Walker \cite{HallSwiftWalker1956}.
	\end{proof}

	By contrast, for $s \ge 1$ and $q$ \emph{not} a prime power the existence question is open.

	\begin{remark}\label{rem:2d_open}
		The smallest undecided case is $q = 6$, $s = 1$. A length-maximal $(14, 36, 12)_6$ code is equivalent to a resolvable $2$-$(36, 6, 2)$ multidesign, whose existence is not settled by the non-existence of an affine plane of order $6$. More generally, determining the non-prime-power orders $q$ for which such a resolvable multidesign exists would complete the existence theory of dimension-$2$ length-maximal codes. It is likewise natural to ask whether, for prime-power $q$ and $s \ge 1$, every length-maximal code must arise from an affine plane by the repetition construction of Corollary~\ref{cor:2d_existence}.
	\end{remark}

	The dimension-$2$ estimate is the first member of a general shortening argument.

	\subsection{The General Bound}
	\label{sec:general_bound}

	\begin{theorem}[Shortening--Plotkin bound]\label{thm:main_bound}
		If $\mathcal{C}$ is an $(n, q^k, d)_q$ $A^s$MDS code with $k \ge 2$,
		then
		\begin{equation}\label{eqn:maximal_arc_bound}
			n \le (s+1)(q+1) + k - 2.
		\end{equation}
	\end{theorem}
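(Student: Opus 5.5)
Induction on $k$, using Lemma~\ref{lem:2d_bound} as the base case and Lemma~\ref{lem:shortening}(2) for the inductive step. For $k=2$ the claim $n\le (s+1)(q+1)$ is exactly Lemma~\ref{lem:2d_bound}. So let $k\ge 3$ and assume the bound for every code of dimension $k-1$, of any length and any defect.

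Take an $(n,q^k,d)_q$ $A^s$MDS code $\mathcal C$. Fix any coordinate $i$. By Lemma~\ref{lem:shortening}(2), some symbol $a$ has at least $\lceil q^k/q\rceil=q^{k-1}$ codewords with $c_i=a$. Keep exactly $q^{k-1}$ of these, delete coordinate $i$, and call the result $\mathcal C'$. Then $\mathcal C'$ is an $(n-1,q^{k-1},d')_q$ code with $d'\ge d$: shortening keeps distinct codewords distinct and does not change pairwise distances, since deleted positions agree. Its Singleton defect is
\[
s'=(n-1)-(k-1)+1-d' = n-k+1-d' \le n-k+1-d = s.
\]

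Apply the induction hypothesis to $\mathcal C'$:
\[
n-1\le (s'+1)(q+1)+(k-1)-2\le (s+1)(q+1)+k-3.
\]
This gives $n\le (s+1)(q+1)+k-2$, as required.

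Two points need checking. The first is that the integer-dimension hypothesis survives: shortening may produce more than $q^{k-1}$ words, so we must pass to a subcode of size exactly $q^{k-1}$. This is allowed because taking a subcode can only increase the minimum distance, which lowers the defect, and the bound is monotone in $s$. The second is that $\mathcal C'$ has dimension at least $2$, which holds since $k-1\ge 2$; the degenerate case $d'$ undefined (one codeword) does not arise. Neither is a real obstacle. The substantive content is the Plotkin double count already carried out in Lemma~\ref{lem:2d_bound}.

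Alternatively, one can shorten $k-2$ times at once to reach a $q^2$-word code of length $n-k+2$ and defect at most $s$, and then apply Lemma~\ref{lem:2d_bound} directly. I would present the argument in this form, because it makes the later equality analysis clear: equality forces $s'=s$ at each step and forces equality in Lemma~\ref{lem:2d_bound} at the bottom.
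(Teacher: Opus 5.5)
Your proof is correct and is essentially the paper's argument: shorten $k-2$ times, passing to a subcode of exactly $q^{k-j}$ words at each step so that $d'\ge d$ and $s'\le s$, then apply the Plotkin estimate to the resulting $q^2$-word code. The only difference is at the last step: the paper quotes the $q$-ary Plotkin bound \eqref{eq:plotkin_length} directly, whereas you invoke Lemma~\ref{lem:2d_bound}, which is the same agreement count specialised to $q^2$ codewords.
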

	
	\begin{proof}
		Apply Lemma~\ref{lem:shortening} successively $k-2$ times. This produces a code $\mathcal C'$ with parameters $(n-k+2,q^2,d')_q$, where $d'\ge d$. The  Plotkin bound \eqref{eq:plotkin_length} gives
		\[
		 n-k+2\ge \left\lceil d'\frac{q+1}{q}\right\rceil \ge d\frac{q+1}{q}.
		\]
		Substituting $d=n-k+1-s$ and rearranging yields $n\le (s+1)(q+1)+k-2$.
	\end{proof}
	
	\begin{definition}\label{def:length_maximal}
		A code attaining the bound in Theorem~\ref{thm:main_bound} is called \emph{length-maximal}.
	\end{definition}
	
	\begin{remark}\label{rem:projective_connection}
		For a linear $[n,k,d]_q$ $A^s$MDS code, the columns of a generator matrix form an $(n, k+s-1)$-arc in $\mathrm{PG}(k-1,q)$, and
		codes attaining $n = (s+1)(q+1)+k-2$ correspond precisely to \emph{maximal arcs}. This connection, and the terminology \emph{length-maximal}, were introduced in \cite{alderson2025projectivesystems}. For $k = 3$, the theorem of Ball, Blokhuis, and Mazzocca
		\cite{Ball1997MaximalArcs} implies that non-trivial linear length-maximal codes (those with $s < q-2$) do not exist for odd $q$; for even $q$
		they exist at least whenever $(s+2) \mid q$, via Denniston's constructions \cite{Denniston1969}.
	\end{remark}
	
	\begin{corollary} \label{cor:griesmer_d_small}
		Let $\mathcal{C}$ be an $(n,q^k,d)_q$ A$^s$MDS code. If $k=2$, or if $k\ge 3$ and $d\le q^2$, then $\mathcal{C}$ respects the Griesmer bound.
	\end{corollary}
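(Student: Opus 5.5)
We compare the bound of Theorem~\ref{thm:main_bound} with the Griesmer sum $g_q(k,d)$, splitting into the cases $k=2$ and $k\ge 3$.

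\emph{Case $k=2$.} This is exactly Corollary~\ref{cor:2d_LM_GB}: Lemma~\ref{lem:2d_bound} gives $n\ge d+\lceil d/q\rceil=g_q(2,d)$ for every $(n,q^2,d)_q$ code. Nothing further is needed.

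\emph{Case $k\ge3$, $d\le q^2$.} We use shortening rather than Theorem~\ref{thm:main_bound} directly, because the latter loses the ceiling. Apply Lemma~\ref{lem:shortening}(2) successively $k-2$ times. This yields a code $\mathcal C'$ with parameters $(n-k+2,\ge q^2,d')_q$ and $d'\ge d$. Passing to a subcode of exactly $q^2$ words keeps minimum distance at least $d$, and deleting further codewords does not lower distance. Hence Corollary~\ref{cor:2d_LM_GB} (or the Plotkin bound \eqref{eq:plotkin_length}, with integrality) applied to this $(n-k+2,q^2,d'')_q$ code gives
\[
 n-k+2\ \ge\ d''+\lceil d''/q\rceil\ \ge\ d+\lceil d/q\rceil .
\]
The last inequality holds because $x\mapsto x+\lceil x/q\rceil$ is nondecreasing. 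Therefore
\[
n\ \ge\ d+\lceil d/q\rceil+(k-2).
\]

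Now evaluate $g_q(k,d)=\sum_{i=0}^{k-1}\lceil d/q^i\rceil$ under the hypothesis $1\le d\le q^2$. For every $i\ge2$ we have $0<d/q^i\le 1$, so $\lceil d/q^i\rceil=1$. Hence
\[
g_q(k,d)=d+\lceil d/q\rceil+(k-2),
\]
which matches the shortened bound exactly, and $n\ge g_q(k,d)$ follows.

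The only point needing care is the monotonicity step. The shortened code may have a larger minimum distance $d''\ge d$, or more than $q^2$ words; both only strengthen the inequality, by monotonicity and by passing to a subcode respectively. The terms $i\ge2$ contribute exactly $k-2$ precisely because $d\le q^2$; this is where the hypothesis enters, and it explains why the statement stops at $d\le q^2$. For larger $d$ the term $\lceil d/q^2\rceil$ exceeds $1$, and the one-step-to-dimension-two argument no longer suffices.
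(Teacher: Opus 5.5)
Your proof is correct and is essentially the paper's argument with the proof of Theorem~\ref{thm:main_bound} (shorten $k-2$ times, then apply the dimension-two Plotkin estimate) written out inline, followed by the same observation that $\lceil d/q^i\rceil=1$ for $i\ge 2$ when $d\le q^2$. One correction to your framing: Theorem~\ref{thm:main_bound} does not lose the ceiling. Substituting $s+1=n-k+2-d$ gives $n-d-k+2\ge d/q$, and since the left side is an integer this yields $\lceil d/q\rceil$; the paper proceeds exactly this way, so your detour through explicit shortening is harmless but unnecessary.
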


	\begin{proof}
		From Theorem \ref{thm:main_bound}, $n\le (s+1)(q+1)+k-2$. Substituting $s+1=n-k-d+2$ yields 
		\begin{equation}\label{eqn:singleton_to_griesmer}
			n\ge d+\frac{d}{q} +k-2.
		\end{equation}
		As $n$ is an integer, \eqref{eqn:singleton_to_griesmer} gives $n\ge d+\lceil d/q\rceil+k-2$. If $k=2$, or if $k\ge 3$ and $d\le q^2$, then $\lceil d/q^i\rceil = 1$ for $2\le i\le k-1$, so $g_q(k,d)= d+\lceil d/q\rceil +k-2$, and hence $n\ge g_q(k,d)$, the Griesmer bound \eqref{eqn:GriesmerBound}.
	\end{proof}
	
	\begin{remark}
		Corollary~\ref{cor:griesmer_d_small} subsumes Theorem~4 of \cite{Guerrini2015Optimal}, which shows that systematic codes with $d\le 2q$ respect the Griesmer bound.
	\end{remark}
	
	\begin{corollary} \label{cor:griesmer_s_small}
		Let $\mathcal{C}$ be an $(n,q^k,d)_q$ A$^s$MDS code, $k\ge 2$. If $s\le  q-1$, then $\mathcal{C}$ respects the Griesmer bound.
	\end{corollary}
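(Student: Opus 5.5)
The plan is to reduce the statement to Corollary~\ref{cor:griesmer_d_small}. That corollary already settles the case $k=2$ unconditionally. For $k\ge3$ it only needs the hypothesis $d\le q^2$, so the whole task is to show that $s\le q-1$ forces $d\le q^2$.

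The upper bound on $d$ comes from combining the Singleton defect identity with Theorem~\ref{thm:main_bound}. Write $d=n-k+1-s$ and insert the length bound $n\le (s+1)(q+1)+k-2$ of Theorem~\ref{thm:main_bound}. This gives
\[
 d\le (s+1)(q+1)+k-2-k+1-s=(s+1)q .
\]
So the minimum distance of any $A^s$MDS code of integer dimension $k\ge2$ is at most $(s+1)q$. This is the only substantive step, and it is immediate from results already proved.

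Under the hypothesis $s\le q-1$ we get $(s+1)q\le q^2$, hence $d\le q^2$. Now split into cases. If $k=2$, Corollary~\ref{cor:griesmer_d_small} applies directly. If $k\ge3$, the condition $d\le q^2$ is exactly the hypothesis of the second case of Corollary~\ref{cor:griesmer_d_small}. In either case $\mathcal C$ respects the Griesmer bound $n\ge g_q(k,d)$.

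I expect no real obstacle here. The one point to check is that the bound $d\le(s+1)q$ uses only Theorem~\ref{thm:main_bound}, which holds for every $k\ge2$ with no linearity assumption. I would also remark that the condition $d\le q^2$ is reached exactly at $s=q-1$. Beyond that value, the $\lceil d/q^2\rceil$ term in $g_q(k,d)$ may exceed $1$, and this is the point at which the refined bounds of Section~\ref{sec:refinements} are needed.
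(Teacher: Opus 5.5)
Your proposal is correct and follows the paper's proof essentially verbatim: substitute $d=n-k+1-s$ into Theorem~\ref{thm:main_bound} to get $d\le (s+1)q\le q^2$, then invoke Corollary~\ref{cor:griesmer_d_small}. Your explicit case split on $k=2$ versus $k\ge 3$ is just a spelled-out version of the paper's single appeal to that corollary.
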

	\begin{proof}
		By definition, $d=n-k+1-s$, so Theorem  \ref{thm:main_bound} provides   \begin{align*}
			d=n-k+1-s & \le (s+1)(q+1)+k-2-k+1-s\\ &= (s+1)(q+1) - (s+1) = q(s+1)\le q^2,
		\end{align*}  
		and the result follows from Corollary~\ref{cor:griesmer_d_small}.
	\end{proof}

	The following lemma provides a key consequence of a code being sufficiently long, that shortening preserves defect. It will be used repeatedly in the sequel.
	
	\begin{lemma}[Defect Preservation]\label{lem:defect_preserved}
		Let $\mathcal{C}$ be an $(n, q^k, d)_q$ $A^s$MDS code with $k \ge 3$, and let $\mathcal{C}'$ be an $(n-1, q^{k-1}, d')_q$ $A^{s'}$MDS code contained in a shortened code $\mathcal{C}_i^{(a)}$ (in particular, $\mathcal{C}'$ may be $\mathcal{C}_i^{(a)}$). If \[
		n \ge s(q+1) + k - 1,
		\]
		then $s' = s$ and $d' = d$.
	\end{lemma}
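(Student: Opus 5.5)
The plan is to squeeze $s'$ between two inequalities: an upper bound from the definitions of $d'$ and $s'$, and a lower bound from Theorem~\ref{thm:main_bound} applied to $\mathcal{C}'$, which is legitimate because $\mathcal{C}'$ has integer dimension $k-1\ge 2$.

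For the upper bound, note that $\mathcal{C}'\subseteq\mathcal{C}_i^{(a)}$. Any two distinct words of $\mathcal{C}'$ come from two distinct codewords of $\mathcal{C}$ that agree in coordinate $i$. Deleting that coordinate therefore leaves their distance unchanged, so $d'\ge d$. The defect of $\mathcal{C}'$ is then
\[
s'=(n-1)-(k-1)+1-d' = n-k+1-d' \le n-k+1-d = s .
\]
In particular $s'\le s$, and we have $s'=s$ exactly when $d'=d$.

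For the lower bound, apply Theorem~\ref{thm:main_bound} to the $(n-1,q^{k-1},d')_q$ code $\mathcal{C}'$. This gives
\[
n-1\le (s'+1)(q+1)+(k-1)-2,
\]
that is, $n\le (s'+1)(q+1)+k-2$. Suppose for contradiction that $s'\le s-1$. Then
\[
n\le s(q+1)+k-2 < s(q+1)+k-1,
\]
which contradicts the hypothesis $n\ge s(q+1)+k-1$. Hence $s'\ge s$. Combined with the upper bound, this gives $s'=s$, and therefore $d'=d$ from the displayed identity.

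No step is a real obstacle. The only point needing care is the justification that $d'\ge d$ for a subcode of a shortened code. One should also check that, if $\mathcal{C}'$ has fewer than two codewords, its minimum distance is not meaningful. This cannot happen here, because $|\mathcal{C}'| = q^{k-1}\ge q^2 > 1$.
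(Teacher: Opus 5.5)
Your proof is correct and follows essentially the same route as the paper. Both show $d'\ge d$ (so $s'\le s$), then assume $s'\le s-1$ and apply Theorem~\ref{thm:main_bound} to the dimension-$(k-1)$ code $\mathcal{C}'$ to get $n\le s(q+1)+k-2$, which contradicts the hypothesis. Your extra checks that $k-1\ge 2$ and $|\mathcal{C}'|>1$ are details the paper leaves implicit.
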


	\begin{proof}
		Since $\mathcal{C}'$ inherits minimum distance $d' \ge d$ from $\mathcal{C}$, we have $s' = n - k + 1 - d' \le s$. Suppose for contradiction that $s' \le s-1$. Applying Theorem~\ref{thm:main_bound} to $\mathcal{C}'$ gives
		\[
		n - 1 \le (s'+1)(q+1) + (k-1) - 2 \le s(q+1) + k - 3,
		\]
		so $n \le s(q+1)+k-2$, contradicting the hypothesis. Hence $s'=s$, and $d' = (n-1)-s-(k-1)+1 = n-s-k+1 = d$.
	\end{proof}
	
	The shortening--Plotkin bound has an equivalent minimum-distance formulation.
	
	\subsection{A Singleton-Type Reformulation}
	\label{sec:new_singleton}
	
	Substituting $s = n-k+1-d$ into \eqref{eqn:maximal_arc_bound} and isolating $d$ yields the following equivalent form. It may also obtained directly by shortening to $q^2$ codewords and applying the Plotkin bound.
	
	\begin{corollary}\label{cor:new_singleton}
		If $\mathcal{C}$ is an $(n, q^k, d)_q$ $A^s$MDS code with $k \ge 2$, then
		\[
		\left\lceil d \cdot \frac{q+1}{q} \right\rceil \le n - k + 2.
		\]
	\end{corollary}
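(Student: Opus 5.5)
The plan is to shorten down to $q^2$ codewords, exactly as in the proof of Theorem~\ref{thm:main_bound}, and then apply the Plotkin length bound \eqref{eq:plotkin_length}. The ceiling is kept throughout rather than discarded.

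Apply Lemma~\ref{lem:shortening}(\ref{item:shortening_size}) repeatedly, $k-2$ times. Each step deletes one coordinate and keeps at least $\lceil M/q\rceil$ codewords. When the current size is $q^j$, the size after the step is therefore at least $q^{j-1}$. We pass to a subcode of exactly that size, which Lemma~\ref{lem:shortening}(\ref{item:shortening_size}) allows. Minimum distance cannot decrease under shortening, and a subcode has minimum distance at least that of the code containing it. After $k-2$ steps we therefore obtain an $(n-k+2,\,q^2,\,d')_q$ code with $d'\ge d$.

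Next apply \eqref{eq:plotkin_length} to this code with $M=q^2$. The factor is $(1-q^{-2})/(1-q^{-1})=(q+1)/q$, so
\[
n-k+2\ \ge\ \left\lceil d'\,\frac{q+1}{q}\right\rceil\ \ge\ \left\lceil d\,\frac{q+1}{q}\right\rceil .
\]
The last inequality holds because $x\mapsto\lceil x(q+1)/q\rceil$ is nondecreasing and $d'\ge d$. This is exactly the claim.

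A second route is purely algebraic. Substitute $s=n-k+1-d$ into \eqref{eqn:maximal_arc_bound}, which gives $d(q+1)/q\le n-k+2$. Since $n-k+2$ is an integer, we may take the ceiling of the left side. Either route works.

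No real obstacle is expected. The only point needing care is the bookkeeping in the repeated shortening: when a shortened code has more than $q^{j-1}$ codewords, we restrict to a subcode of exactly $q^{j-1}$ codewords. This does not lower the minimum distance, so the Plotkin bound can be applied with $M=q^2$ exactly.
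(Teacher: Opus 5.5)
Your proposal is correct and matches the paper. The paper's proof is your second route: it substitutes $s=n-k+1-d$ into Theorem~\ref{thm:main_bound} and uses the integrality of $n-k+2$. The text just before the corollary also notes your first route, shortening to $q^2$ codewords and applying \eqref{eq:plotkin_length} with the ceiling retained.
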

	
	\begin{proof}
		From $n \le (s+1)(q+1) + k - 2$ and $s = n - k + 1 - d$, substitution gives $n \le (n - k + 2 - d)(q+1) + k - 2$, which provides $d \cdot \frac{q+1}{q} \le n - k + 2$.
	\end{proof}
	
	\begin{remark}
		Binary systematic codes satisfy $M = 2^k$, so Corollary~\ref{cor:new_singleton} gives $\lceil \frac{3}{2}d \rceil \le n - k + 2$, recovering \eqref{eq:GMS_bound} as a special case.
	\end{remark}
	
We now turn from length bounds to the internal structure of codes that achieve them.
	
	\subsection{Structural Consequences of Length-Maximality}
	\label{sec:structure}
	
	We now show that length-maximal codes are highly structured, in that they must be symbol-uniform, their pairwise distances take only a restricted set of values, and their defect satisfies a divisibility condition.
	
	\begin{theorem}[Symbol-uniformity under shortening]
		\label{thm:optimal_uniform}
		If $\mc{C}$ is a length-maximal $(n, q^k, d)_q$ $A^s$MDS code with $k \ge 2$, then $\mc{C}$ is symbol-uniform. Moreover, any code $\mc{C}'$ of dimension at least $2$
		obtained by successively shortening $\mc{C}$ is also symbol-uniform.
	\end{theorem}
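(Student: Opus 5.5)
The plan is to run an induction on $k$, with Lemma~\ref{lem:2d_bound} as the base case and Lemma~\ref{lem:defect_preserved} keeping the defect fixed as we shorten. The key point is that length-maximality passes to shortenings, provided each shortened code has exactly $q^{k-1}$ codewords.

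\textbf{Base case $k=2$.} This is exactly the equality case of Lemma~\ref{lem:2d_bound}: a length-maximal $(n,q^2,d)_q$ code has $n=(s+1)(q+1)$, so it is symbol-uniform.

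\textbf{Inductive step $k\ge 3$.} Let $\mathcal C$ be length-maximal, so $n=(s+1)(q+1)+k-2$. Fix a coordinate $i$ and a symbol $a$, and put $M_a=|\{\mathbf c\in\mathcal C: c_i=a\}|$.

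1. First show that $M_a\le q^{k-1}$ for every $a$. Suppose instead $M_a>q^{k-1}$. Then $\mathcal C_i^{(a)}$ is an $(n-1,M_a,d')_q$ code with $d'\ge d$.
   - Choose any subset of it of size $q^{k-1}+1$, or shorten further.
   - The cleanest route is to repeat the argument of Theorem~\ref{thm:main_bound} with the refined Plotkin inequality \eqref{eq:plotkin_length}, using the true size rather than a power of $q$.
   - Shortening $\mathcal C_i^{(a)}$ another $k-3$ times, each time keeping at least the average fraction $1/q$, yields a length-$(n-k+2)$ code with more than $q^2$ codewords, since $\lceil M_a/q^{k-3}\rceil>q^2$ whenever $M_a>q^{k-1}$. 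Its distance is at least $d$.
   - Plotkin with $M>q^2$ then gives $n-k+2\ge d\,\frac{1-1/M}{1-1/q}>d\frac{q+1}{q}$.
   - Substituting $d=n-k+1-s$ gives $n<(s+1)(q+1)+k-2$, contradicting length-maximality.

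2. Since $\sum_a M_a=q^k$ and each $M_a\le q^{k-1}$, every $M_a=q^{k-1}$. Hence $\mathcal C$ is symbol-uniform in coordinate $i$, and as $i$ was arbitrary, symbol-uniform.

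3. For the second assertion, each $\mathcal C_i^{(a)}$ is an $(n-1,q^{k-1},d')_q$ code. Since $n\ge s(q+1)+k-1$, Lemma~\ref{lem:defect_preserved} gives $d'=d$ and defect $s$. Then
\[
n-1=(s+1)(q+1)+(k-1)-2,
\]
so $\mathcal C_i^{(a)}$ is length-maximal of dimension $k-1$. By induction it is symbol-uniform, and so are all of its successive shortenings down to dimension $2$.

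\textbf{Main obstacle.} The delicate step is step 1. Plotkin's ceiling-free inequality is strict only when $M>q^2$ exactly; a mere $M\ge q^2$ gives equality. So I must verify that the iterated shortening of a set of size exceeding $q^{k-1}$ really produces strictly more than $q^2$ words at length $n-k+2$.

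Choosing at each step the most frequent symbol gives size at least $\lceil M/q\rceil$, and $\lceil (q^{j}+1)/q\rceil=q^{j-1}+1$. So strict excess propagates down to $q^2+1$ words, and the strict inequality then follows from \eqref{eq:plotkin_length} because $1-1/M$ is increasing in $M$. An alternative, if that feels fragile, is to argue directly with the agreement-counting convexity of Lemma~\ref{lem:2d_bound} generalized to the dimension-$2$ shortening.
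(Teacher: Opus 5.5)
Your proof is correct, but for the first assertion it takes a different route from the paper. The paper handles an oversized fibre (a symbol occurring in at least $q^{k-1}+1$ codewords of some coordinate) combinatorially. By Lemma~\ref{lem:defect_preserved}, every $q^{k-1}$-subset of the shortened fibre is a length-maximal code of dimension $k-1$, hence symbol-uniform by the induction hypothesis. Comparing two such subsets whose symmetric difference is $\{u,v\}$ then forces $u$ and $v$ to agree in every coordinate.

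You instead carry the excess down by largest-fibre shortenings to length $n-k+2$ while keeping at least $q^2+1$ words; your identity $\lceil (q^j+1)/q\rceil=q^{j-1}+1$ is exactly what is needed. You then use that the factor $1-1/M$ in \eqref{eq:plotkin_length} is strictly increasing in $M$ to get $n-k+2>d(q+1)/q$, that is, $n<(s+1)(q+1)+k-2$.

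Your version shows directly that symbol-uniformity is exactly the absence of slack in the Plotkin step of Theorem~\ref{thm:main_bound}. For $k\ge3$ it needs neither the induction hypothesis nor Lemma~\ref{lem:defect_preserved}; these enter only in your step~3, which coincides with the paper's final paragraph.

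The paper's swap argument, for its part, never re-applies Plotkin to sizes that are not powers of $q$. It only transfers symbol-uniformity down from dimension $k-1$, which is why the same argument can be recycled in Lemma~\ref{lem:binary_symbol_uniform_bound} under a hypothesis weaker than length-maximality.
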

	
	\begin{proof}
		We proceed by induction on $k$. The base case $k=2$ is the equality	characterisation in Lemma~\ref{lem:2d_bound}.
		
		Assume $k \ge 3$ and that $\mathcal{C}$ is length-maximal but not symbol-uniform. Some symbol $\alpha \in \A$ appears in at least $q^{k-1}+1$ codewords in some coordinate, say coordinate $1$. Consider the shortened code  $\mc{C}_1^{(\alpha)}$.	Since $|\mc{C}_1^{(\alpha)}| \ge q^{k-1}+1$, we may choose distinct	$u\ne  v \in \mc{C}_1^{(\alpha)}$.
		
		Since $n = (s+1)(q+1)+k-2 \ge s(q+1)+k-1$, Lemma~\ref{lem:defect_preserved} applies, so any $q^{k-1}$-subset $\mathcal{C}'\subseteq \mc{C}_1^{(\alpha)}$ has defect $s$, and is length-maximal. By the induction hypothesis, $\mc{C}'$  is symbol-uniform. Thus, by considering any two $q^{k-1}$-subsets of $\mc{C}_1^{(\alpha)}$ with symmetric difference $\{u, v\}$, it follows that $u$ and $v$ must carry the same symbol in each coordinate, contradicting $u \ne v$. Hence $\mathcal{C}$ is symbol-uniform.

		For the final claim, since $\mathcal{C}$ is symbol-uniform, every shortened code $\mathcal{C}_i^{(a)}$ has exactly $q^{k-1}$ codewords and, by Lemma~\ref{lem:defect_preserved}, defect $s$; its length is $(s+1)(q+1)+(k-1)-2$, so it is again length-maximal. Iterating, every code of dimension at least $2$ obtained from $\mathcal{C}$ by successive shortening is length-maximal, and is therefore symbol-uniform by the first part.
	\end{proof}

	\begin{corollary}[Orthogonal-array structure]\label{cor:orthogonal_array}
		Every length-maximal $(n,q^k,d)_q$ $A^s$MDS code with $k\ge 2$ is an orthogonal array
		\[
		\operatorname{OA}(q^k,n,q,k-1)
		\]
		of index $q$. More explicitly, if $1\le j\le k-1$, then any prescribed symbols in any $j$ prescribed coordinates occur together in exactly $q^{k-j}$ codewords.
	\end{corollary}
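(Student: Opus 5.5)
The plan is an induction on $j$, using Theorem~\ref{thm:optimal_uniform} together with the fact, established in its proof, that every shortened code $\mathcal{C}_i^{(a)}$ of a length-maximal code of dimension $k\ge3$ is again length-maximal, of dimension $k-1$ and with the same defect $s$. I will prove the explicit statement: for $1\le j\le k-1$, any $j$ prescribed coordinates $i_1,\dots,i_j$ and symbols $a_1,\dots,a_j$ occur together in exactly $q^{k-j}$ codewords. Taking $j=k-1$ then gives that every $(k-1)$-tuple occurs exactly $q=q^k/q^{k-1}$ times in any $k-1$ columns, which is the $\operatorname{OA}(q^k,n,q,k-1)$ of index $q$. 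The lower strengths follow as well, but they are implied by strength $k-1$ anyway.

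For $j=1$ the claim is exactly symbol-uniformity, which is the first part of Theorem~\ref{thm:optimal_uniform}. For the inductive step, suppose $2\le j\le k-1$. I first shorten at coordinate $i_1$ with symbol $a_1$. By symbol-uniformity, $\mathcal{C}_{i_1}^{(a_1)}$ has exactly $q^{k-1}$ codewords. Since $k\ge j+1\ge3$, Lemma~\ref{lem:defect_preserved} applies, so this shortened code has defect $s$ and length $(s+1)(q+1)+(k-1)-2$. It is therefore a length-maximal code of dimension $k-1\ge2$.

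The codewords of $\mathcal{C}$ with prescribed symbols $a_1,\dots,a_j$ in coordinates $i_1,\dots,i_j$ correspond bijectively to the codewords of $\mathcal{C}_{i_1}^{(a_1)}$ carrying $a_2,\dots,a_j$ in the images of $i_2,\dots,i_j$. That is $j-1$ prescribed coordinates in a length-maximal code of dimension $k-1$, and $j-1\le (k-1)-1$. The induction hypothesis, applied in dimension $k-1$, then gives exactly $q^{(k-1)-(j-1)}=q^{k-j}$ such codewords. To make this clean, the induction should be on $k$, with the statement quantified over all $j\le k-1$, so that the hypothesis for dimension $k-1$ is available.

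The only point needing care is that shortening keeps length-maximality. This needs both the exact count $q^{k-1}$, from symbol-uniformity, and the preserved defect, from Lemma~\ref{lem:defect_preserved}, which in turn requires $k\ge3$. Both are already recorded in the proof of Theorem~\ref{thm:optimal_uniform}, so no real obstacle remains beyond bookkeeping the dimension ranges: we shorten only while the dimension stays at least $2$, and this holds because $j\le k-1$.
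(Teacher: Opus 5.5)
Your proposal is correct and is essentially the paper's argument. The paper shortens successively in the first $j-1$ prescribed coordinates and applies symbol-uniformity to the resulting length-maximal code of dimension $k-j+1\ge 2$, using the ``moreover'' part of Theorem~\ref{thm:optimal_uniform}; your induction on $k$ carries out the same shortenings one at a time. Your explicit check that each shortened code has exactly $q^{k-1}$ words and defect $s$ is the bookkeeping the paper leaves implicit. That check uses Lemma~\ref{lem:defect_preserved}, whose hypothesis $n\ge s(q+1)+k-1$ holds automatically at $n=(s+1)(q+1)+k-2$.
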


	\begin{proof}
		Fix $j$ coordinates and prescribed symbols. Shorten successively in the first $j-1$ of those coordinates. The resulting code has dimension $k-j+1\ge 2$ and is symbol-uniform by Theorem~\ref{thm:optimal_uniform}; hence the prescribed symbol in the last coordinate occurs $q^{k-j}$ times. This is the defining condition for an orthogonal array of strength $k-1$ and index $q$.
	\end{proof}
	
	\begin{theorem}[Distance Distribution of Length-Maximal Codes]
		\label{thm:optimal_weights}
		If $\mathcal{C}$ is a length-maximal $(n, q^k, d)_q$ $A^s$MDS code with $k \ge 2$, then for any two distinct codewords $u, v \in \mathcal{C}$,
		\[
		\dist(u,v) \in \{d\} \cup \{n, n-1, \ldots, n-k+3\}.
		\]
	\end{theorem}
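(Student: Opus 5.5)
Suppose $u,v$ are distinct codewords with $d<\dist(u,v)\le n-k+2$. We show this is impossible by shortening down to dimension two while keeping both $u$ and $v$, and then invoking the equidistance forced in Lemma~\ref{lem:2d_bound}. The number of agreements is $a=n-\dist(u,v)$, so the assumption reads $k-2\le a< n-d=s+k-1$. It is cleaner to split into two cases: $a\ge k-2$, and $a<k-2$ (the latter is the allowed range, giving distances $\ge n-k+3$). So assume $a\ge k-2$ and $a\ne s+k-1$; the goal is to force $a=s+k-1$, i.e.\ $\dist(u,v)=d$.

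Choose $k-2$ coordinates $i_1,\dots,i_{k-2}$ on which $u$ and $v$ agree; this is possible since $a\ge k-2$. Shorten successively at these coordinates, each time using the common symbol of $u$ and $v$. By Theorem~\ref{thm:optimal_uniform} each intermediate code is symbol-uniform. By Lemma~\ref{lem:defect_preserved}, applied at each step, the final code is a length-maximal $(n-k+2,q^2,d)_q$ code containing the images $u',v'$. The hypothesis of Lemma~\ref{lem:defect_preserved} holds at each stage because length-maximality is inherited, exactly as in the proof of Theorem~\ref{thm:optimal_uniform}. We also need the final code to have dimension exactly $2$; this holds by Corollary~\ref{cor:orthogonal_array}, since it has $q^{2}$ codewords.

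The shortened coordinates were all agreements, so $u'$ and $v'$ agree in exactly $a-(k-2)$ of the remaining coordinates. Lemma~\ref{lem:2d_bound} says the dimension-two length-maximal code is equidistant, with every pair agreeing in exactly $s+1$ coordinates. Hence $a-(k-2)=s+1$, so $a=s+k-1$ and $\dist(u,v)=n-s-k+1=d$.

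Consequently every pair either has $\dist(u,v)=d$ or agrees in at most $k-3$ positions, that is, $\dist(u,v)\ge n-k+3$. This is the claim. There is no real obstacle. The only care needed is to check that every intermediate shortened code is again length-maximal of the right dimension, so that both the induction and Lemma~\ref{lem:2d_bound} apply. For $k=2$ no shortening occurs and the statement is Lemma~\ref{lem:2d_bound} directly; the set $\{n,\dots,n-k+3\}$ is then empty and the code is equidistant.
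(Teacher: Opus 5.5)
Your proposal is correct and is essentially the paper's argument with the induction unrolled. The paper shortens at a single agreement coordinate and invokes the inductive hypothesis, while you shorten at $k-2$ agreement coordinates at once and apply the dimension-two equidistance of Lemma~\ref{lem:2d_bound} directly; both rely on Lemma~\ref{lem:defect_preserved} and Theorem~\ref{thm:optimal_uniform} to keep each shortened code length-maximal.
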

	
	\begin{proof}
		We proceed by induction on $k$. For $k=2$, the result follows from Lemma~\ref{lem:2d_bound}.\\
		Assume $k \ge 3$. By Theorem~\ref{thm:optimal_uniform}, $\mathcal{C}$ is symbol-uniform. Let $u\ne v \in \mathcal{C}$. If $u$ and
		$v$ agree in no coordinate then $\dist(u,v) = n$. Otherwise, let $i$ be a coordinate in which $u_i = v_i$. The shortened code $
		\mathcal{C}_i^{(u_i)}$ contains both $u'$ and $v'$ (the shortening of $u$ and $v$ respectively). By 	Lemma~\ref{lem:defect_preserved}, $\mathcal{C}_i^{(u_i)}$ is a length-maximal $(n-1, q^{k-1}, d)_q$ $A^s$MDS code, and the induction hypothesis gives \[	\dist(u,v) = \dist(u',v') \in  \{d\} \cup \{n-1,\ \ldots,\ (n-1)-(k-1)+3\} = \{d\} \cup \{n-1, n-2, \ldots, n-k+3\},\]
		completing the induction.
	\end{proof}
	
	\begin{corollary}[Weight Distribution of Length-Maximal Codes]
		\label{cor:distance_distribution}
		If $\mathcal{C}$ is a length-maximal $(n, q^k, d)_q$ $A^s$MDS code with $k \ge 2$ and $\mathbf{0} \in \mathcal{C}$, then every nonzero codeword $\mathbf{c} \in \mathcal{C}$ has weight
		\[
		w(\mathbf{c}) \in \{d\} \cup \{n, n-1, \ldots, n-k+3\}.
		\]
	\end{corollary}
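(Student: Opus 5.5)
This follows directly from Theorem~\ref{thm:optimal_weights} once we note that weight is distance to the zero word. Since $\mathbf{0}\in\mathcal{C}$ by hypothesis, every nonzero codeword $\mathbf{c}$ forms, together with $\mathbf{0}$, a pair of distinct codewords. The Hamming weight satisfies $w(\mathbf{c})=\dist(\mathbf{c},\mathbf{0})$, because both count the coordinates where $\mathbf{c}$ has a nonzero symbol.

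Applying Theorem~\ref{thm:optimal_weights} to the pair $u=\mathbf{c}$, $v=\mathbf{0}$ then gives $w(\mathbf{c})\in\{d\}\cup\{n,n-1,\ldots,n-k+3\}$. This is exactly the claimed statement.

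Two routine points need checking. First, the normalisation $\mathbf{0}\in\mathcal{C}$ is achieved by a symbol permutation. Such permutations preserve Hamming distances, so they preserve length-maximality and the hypotheses of Theorem~\ref{thm:optimal_weights}. Second, here the symbol $0$ is simply a distinguished element of $\A$, and weight means the number of coordinates differing from that element.

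There is no real obstacle: all the substance lies in the inductive proof of Theorem~\ref{thm:optimal_weights}, which in turn uses the defect preservation of Lemma~\ref{lem:defect_preserved} and the symbol-uniformity of Theorem~\ref{thm:optimal_uniform}. Since normalising at any codeword works the same way, the same argument gives the corresponding statement from every codeword of $\mathcal{C}$.
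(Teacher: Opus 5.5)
Your proposal is correct and matches the paper's proof, which simply applies Theorem~\ref{thm:optimal_weights} to the pair $\mathbf{c},\mathbf{0}$ and uses $w(\mathbf{c})=\dist(\mathbf{c},\mathbf{0})$. The extra remarks about normalisation by symbol permutations are harmless but unnecessary here, since $\mathbf{0}\in\mathcal{C}$ is already a hypothesis.
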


	\begin{proof}
		Apply Theorem~\ref{thm:optimal_weights} with $\mathbf{v} = \mathbf{0}$.
	\end{proof}

	\begin{lemma}\label{lem:divisibility}
		If $\mathcal{C}$ is a length-maximal $(n, q^k, d)_q$ $A^s$MDS code with $k \ge 3$, then $s \le q-1$ and $(s+2) \mid q(q+1)$.
	\end{lemma}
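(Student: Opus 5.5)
The plan is to reduce to dimension three and then count agreements with a fixed codeword inside a single ``slice'' of the code. By Theorem~\ref{thm:optimal_uniform} and Lemma~\ref{lem:defect_preserved}, shortening $\mathcal{C}$ successively $k-3$ times produces a length-maximal $(n',q^3,d)_q$ $A^s$MDS code. It has the same defect $s$ and length $n'=(s+1)(q+1)+1$. Both conclusions involve only $s$ and $q$, so it suffices to treat $k=3$.

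\textbf{Parameters for $k=3$.} Here
\[
d=n-2-s=(s+1)q \qquad\text{and}\qquad n-d=s+2 .
\]
Theorem~\ref{thm:optimal_weights} gives $\dist(u,v)\in\{d,n\}$ for distinct $u,v\in\mathcal{C}$. Thus two distinct codewords agree either in exactly $s+2$ coordinates or in none. Corollary~\ref{cor:orthogonal_array} says $\mathcal{C}$ is an $\operatorname{OA}(q^3,n,q,2)$. Hence for any two coordinates $i\ne j$ and symbols $b,c$, exactly $q$ codewords satisfy $c_i=b$ and $c_j=c$.

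\textbf{The counting step.} Fix a codeword $x$, a coordinate $i$ and a symbol $b\ne x_i$. Let $S=\{\mathbf{c}\in\mathcal{C} : c_i=b\}$; by symbol-uniformity $|S|=q^2$, and $x\notin S$. Let $A_b$ be the number of members of $S$ at distance $d$ from $x$. I will double count pairs $(\mathbf{c},j)$ with $\mathbf{c}\in S$, $j\ne i$ and $c_j=x_j$.
\begin{itemize}
\item Counted by coordinate: each $j\ne i$ contributes exactly $q$ words, by strength $2$. The total is $(n-1)q$.
\item Counted by codeword: a word in $S$ agrees with $x$ nowhere (distance $n$) or in exactly $s+2$ coordinates (distance $d$). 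None of these agreements is at $i$, since $b\ne x_i$. The total is $A_b(s+2)$.
\end{itemize}
Therefore
\[
A_b=\frac{(n-1)q}{s+2}=\frac{(s+1)(q+1)q}{s+2}.
\]

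\textbf{The two conclusions.} Since $A_b$ is an integer and $\gcd(s+1,s+2)=1$, we get $(s+2)\mid q(q+1)$. Since $A_b\le |S|=q^2$, we get $(s+1)(q+1)\le q(s+2)$, which simplifies to $s+1\le q$, that is, $s\le q-1$.

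\textbf{Expected obstacle.} I expect the main difficulty to be choosing the right count. The na\"ive count over all codewords, using only strength one, gives $A(s+2)=n(q^2-1)$. That yields $s\le q-1$, because the number of disjoint codewords $q(q-1)(q-s-1)/(s+2)$ must be nonnegative. However, it only gives $(s+2)\mid q(q-1)(q+1)$. The second-moment identity from strength two turns out to be automatically satisfied. Restricting to a class $c_i=b\ne x_i$ is what removes the spurious factor $q-1$. It works because the excluded coordinate $i$ carries no agreements, while strength two still controls every other coordinate.
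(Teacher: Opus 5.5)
Your proof is correct and is essentially the paper's argument. The paper also reduces to $k=3$, normalises the fixed codeword to $\mathbf{0}$, takes the $q^2$ codewords with first entry $1$, and counts their zeros using symbol-uniformity of the shortened code $\mathcal{C}'^{(1)}_1$ (your strength-two count, phrased through Theorem~\ref{thm:optimal_uniform}). It then concludes exactly as you do from $(s+2)\mid q(s+1)(q+1)$ and $q(s+1)(q+1)\le q^2(s+2)$.
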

	
	\begin{proof}
		By Lemma~\ref{lem:defect_preserved}, we may successively shorten $\mathcal{C}$ to obtain a length-maximal $(n', q^3, d)_q$ $A^s$MDS code $\mathcal{C}'$ with $n' = (s+1)(q+1)+1$. We may assume (perhaps after relabelling) that $\mathbf{0} \in \mathcal{C}'$. By Corollary~\ref{cor:distance_distribution}, the nonzero codeword weights of $\mathcal{C}'$ lie in $\{d\} \cup \{n'\}$.
		
		Let $B$ denote the set of $q^2$ codewords of $\mathcal{C}'$ whose first entry is the symbol $1$. Since $\mathbf{0} \notin B$, each word in $B$ has weight $d$ or $n'$. A word of weight $d$ has
		$n' - d = s+2$ zeros, none in the first position. A word of weight $n'$ has no zeros. Thus every row of $B$ contributes either $0$ or $s+2$ zeros to columns	$2, \ldots, n'$.
		
		 The shortened code $\mathcal{C'}_1^{(1)}$ is a length-maximal $(n'-1, q^2, d)_q$ $A^s$MDS code  (Lemma~\ref{lem:defect_preserved}), so by Theorem~\ref{thm:optimal_uniform} is 	symbol-uniform. The total number of zero entries in $B$ is therefore $q \cdot (s+1)(q+1)$. Since each row contributes $0$ or $s+2$ zeros, $(s+2)$ divides $q(s+1)(q+1)$, and since	$\gcd(s+1, s+2) = 1$, we obtain $(s+2) \mid q(q+1)$.
		
		For the bound on $s$: the total zero count $q(s+1)(q+1)$ is at most  $q^2(s+2)$ (since each of the $q^2$ rows contributes at most $s+2$ zeros), so $q(s+1)(q+1) \le q^2(s+2)$, which simplifies to $s \le q-1$. 
			\end{proof}
		
	\begin{corollary}\label{cor:divisibility_gcd}
		If $\mathcal{C}$ is a length-maximal $(n, q^k, d)_q$ $A^s$MDS code 	with $k \ge 3$ and $\gcd(s+2, q) = 1$, then $(s+2) \mid (q+1)$. In particular, if $q$ is prime and $s < q-2$, then $(s+2) \mid (q+1)$.
	\end{corollary}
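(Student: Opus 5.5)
This is a direct consequence of Lemma~\ref{lem:divisibility}, so the plan is a short number-theoretic deduction from it.

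\textbf{First assertion.} By Lemma~\ref{lem:divisibility}, a length-maximal code with $k\ge 3$ satisfies $(s+2)\mid q(q+1)$. If $\gcd(s+2,q)=1$, then $s+2$ shares no factor with $q$. Euclid's lemma, in its coprime form, then gives $(s+2)\mid(q+1)$.

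\textbf{Second assertion.} Let $q$ be prime with $s<q-2$. Since $s\ge 0$, we have $2\le s+2\le q-1<q$. The only divisors of the prime $q$ are $1$ and $q$, so $\gcd(s+2,q)=1$. The first assertion then yields $(s+2)\mid(q+1)$.

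No real obstacle arises. The only point needing care is that the hypothesis $s<q-2$ is exactly what guarantees $s+2\neq q$, and hence coprimality. In the excluded case $s=q-2$ we have $s+2=q$, which divides $q(q+1)$ trivially, so nothing further can be concluded there.
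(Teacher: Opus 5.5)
Your proposal is correct and follows the paper's proof essentially verbatim: you invoke Lemma~\ref{lem:divisibility} to get $(s+2)\mid q(q+1)$ and cancel the coprime factor $q$. For the prime case you use $s+2<q$ to force $\gcd(s+2,q)=1$, exactly as the paper does.
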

	
	\begin{proof}
		By Lemma~\ref{lem:divisibility}, $(s+2) \mid q(q+1)$; since $\gcd(s+2, q) = 1$, this forces $(s+2) \mid (q+1)$. If $q$ is prime and $s < q-2$, then $s+2 < q$ gives $\gcd(s+2, q) = 1$.
	\end{proof}
	
	\begin{remark}
		For linear length-maximal codes the stronger condition $s=q-1$ or $(s+2) \mid q$ is required \cite{Ball1997MaximalArcs, alderson2025projectivesystems}. 	Lemma~\ref{lem:divisibility} shows that nonlinear length-maximal codes must satisfy the weaker condition that $s\le q-1$ and $(s+2) \mid q(q+1)$, leaving open whether the stronger linear condition is also necessary in the nonlinear
		setting. Sufficient conditions under which $(s+2)\mid q$ is recovered are given in Lemma~\ref{lem:full_secant_divisibility} and Corollary~\ref{cor:homogeneous_divisibility} below.
	\end{remark}

	The divisibility of Lemma~\ref{lem:divisibility} is one facet of a sharper structural fact, that the zero-sets of the minimum-weight codewords form a $2$-design.

	\begin{theorem}[Minimum-weight zero-sets form a $2$-design]
		\label{thm:zero_set_design}
		Let $\mathcal{C}$ be a length-maximal $(n, q^3, d)_q$ $A^s$MDS code with $\mathbf{0} \in \mathcal{C}$, so that $n = (s+1)(q+1)+1$, $d = (s+1)q$, and every nonzero codeword has weight $d$ or $n$. For a weight-$d$ codeword $\mathbf{c}$ write $Z(\mathbf{c}) = \{\, i : c_i = 0 \,\}$ for its zero-set, of size $s+2$. Then the family
		\[
			\bigl\{\, Z(\mathbf{c}) : \mathbf{c} \in \mathcal{C},\ w(\mathbf{c}) = d \,\bigr\},
		\]
		regarded as a collection of blocks on the point set $\{1, \dots, n\}$, is a $2$-$(n,\, s+2,\, q-1)$ design with replication number $r = q^2 - 1$ and $b = n(q^2-1)/(s+2)$ blocks (counted with multiplicity).
	\end{theorem}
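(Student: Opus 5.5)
The plan is to count, for each pair of coordinates, the codewords vanishing on both, using the orthogonal-array structure of Corollary~\ref{cor:orthogonal_array} with $k=3$ (strength $2$, index $q$). Since $\mathbf{0}\in\mathcal C$ and every nonzero codeword has weight $d$ or $n$ (Corollary~\ref{cor:distance_distribution}), a nonzero codeword has a zero in some coordinate exactly when it has weight $d$. In that case it has exactly $n-d=(s+2)$ zeros, so $|Z(\mathbf c)|=s+2$. Thus blocks are exactly the zero-sets of the nonzero codewords having at least one zero.

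\textbf{Replication number.} Fix a coordinate $i$. By symbol-uniformity (Theorem~\ref{thm:optimal_uniform}), exactly $q^2$ codewords have $c_i=0$. One of these is $\mathbf 0$, and the remaining $q^2-1$ are nonzero with a zero at $i$, hence of weight $d$. So point $i$ lies in exactly $r=q^2-1$ blocks.

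\textbf{Pair count.} Fix distinct coordinates $i\ne j$. By the strength-$2$ property, exactly $q^{3-2}=q$ codewords have $c_i=c_j=0$. Removing $\mathbf 0$ leaves $q-1$ codewords, each of weight $d$ and containing $\{i,j\}$ in its zero-set. Hence every pair lies in exactly $\lambda=q-1$ blocks, counted with multiplicity (distinct codewords may share a zero-set).

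\textbf{Block count and consistency.} Double counting incidences gives $b(s+2)=nr=n(q^2-1)$. As a check on $\lambda$, we need $r(s+2-1)=\lambda(n-1)$. Indeed $(q^2-1)(s+1)=(q-1)(s+1)(q+1)=(q-1)(n-1)$, which matches $n-1=(s+1)(q+1)$.

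There is no real obstacle here. The only point needing care is ensuring that the strength-$2$ statement is available for $k=3$: Corollary~\ref{cor:orthogonal_array} with $j=2\le k-1$ gives exactly this, via shortening to a length-maximal dimension-$2$ code, which is symbol-uniform.
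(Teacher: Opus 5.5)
Your proof is correct and follows essentially the paper's argument: the paper gets the $q$ codewords vanishing on $\{i,j\}$ by shortening at $i$ and using symbol-uniformity of the length-maximal dimension-two shortened code, which is exactly the content of Corollary~\ref{cor:orthogonal_array} with $j=2$, and it obtains $b$ by the same incidence count. The only cosmetic difference is the order for $r$: you count $r=q^2-1$ directly from symbol-uniformity and use $r(s+1)=\lambda(n-1)$ as a check, whereas the paper derives $r$ from $\lambda$ and mentions the direct count as an equivalent.
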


	\begin{proof}
		By Corollary~\ref{cor:distance_distribution} (with $k=3$) every nonzero codeword has weight $d$ or $n$; a weight-$n$ codeword has no zero entry, so every codeword possessing a zero coordinate is either $\mathbf{0}$ or a weight-$d$ word, whose zero-set has size $n-d = s+2$.

		Fix two coordinates $i \ne j$ and put $S = \{\, \mathbf{c} \in \mathcal{C} : c_i = 0 \,\}$. By symbol-uniformity (Theorem~\ref{thm:optimal_uniform}), $|S| = q^3/q = q^2$. Deleting coordinate $i$ from the words of $S$ produces the shortened code $\mathcal{C}_i^{(0)}$, a length-maximal $(n-1, q^2, d)_q$ $A^s$MDS code (Lemma~\ref{lem:defect_preserved}), which is itself symbol-uniform by Theorem~\ref{thm:optimal_uniform}. Hence in coordinate $j$ the symbol $0$ occurs exactly $|S|/q = q$ times among the words of $S$; that is, exactly $q$ codewords satisfy $c_i = c_j = 0$. One of these is $\mathbf{0}$, and the remaining $q-1$ each possess a zero coordinate and so have weight $d$. Therefore every pair of coordinates lies in exactly $q-1$ blocks, and the family is a $2$-design with $\lambda = q-1$.

		Each block has size $s+2$, so the replication number is $r = \lambda(n-1)/\bigl((s+2)-1\bigr) = (q-1)(s+1)(q+1)/(s+1) = q^2-1$; equivalently, each coordinate carries a zero in exactly $q^2-1$ of the $q^3-1$ nonzero codewords. Counting incidences yields $b(s+2) = n(q^2-1)$.
	\end{proof}


	A single block of \emph{full} multiplicity $q-1$ in the design of Theorem~\ref{thm:zero_set_design}, or equivalently, a single support shared by $q-1$ minimum-weight codewords, forces the same divisibility conditions of linear codes, provided $s \le q-2$.

	\begin{lemma}\label{lem:full_secant_divisibility}
		Let $\mathcal{C}$ be a length-maximal $(n, q^3, d)_q$ $A^s$MDS code with $s \le q-2$. If $\mathcal{C}$ contains $q-1$ weight-$d$ codewords sharing a common support, then $(s+2) \mid q$.
	\end{lemma}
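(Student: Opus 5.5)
The plan is to imitate the linear argument that a secant meeting a maximal arc in $s+2$ points forces $(s+2)\mid q$, replacing points off the arc by suitably chosen codewords. Let $\mathbf{c}^1,\dots,\mathbf{c}^{q-1}$ be the given weight-$d$ codewords with common support. Their common zero-set $Z$ has $|Z|=n-d=s+2$. Put $T=\{\mathbf{0},\mathbf{c}^1,\dots,\mathbf{c}^{q-1}\}$, a set of $q$ codewords that are all zero on $Z$.

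\emph{Step 1: the words of $T$ agree exactly on $Z$.} Any two distinct members of $T$ agree on all of $Z$, so their distance is at most $n-(s+2)=d$. Hence it equals $d$ exactly, and they agree in no coordinate outside $Z$. It follows that for each coordinate $i\notin Z$ the $q$ words of $T$ carry $q$ pairwise distinct symbols, i.e.\ every symbol of $\A$ exactly once.

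\emph{Step 2: a double count from an outside codeword.} Let $\mathbf{x}\in\mathcal{C}\setminus T$ and write $z=|\{i\in Z: x_i=0\}|$. Sum over $\mathbf{t}\in T$ the number of coordinates in which $\mathbf{x}$ and $\mathbf{t}$ agree.
\begin{itemize}
\item Each coordinate outside $Z$ contributes exactly $1$ to the sum, by Step 1.
\item Each coordinate of $Z$ where $x_i=0$ contributes $q$; the other coordinates of $Z$ contribute $0$.
\end{itemize}
So the sum is $n-(s+2)+qz$. On the other hand, Theorem~\ref{thm:optimal_weights} says every distance in $\mathcal{C}$ is $d$ or $n$ (here $k=3$). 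So each individual agreement count is $s+2$ or $0$. Therefore
\[
(s+2)\mid n-(s+2)+qz=(s+1)q+s+qz .
\]

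\emph{Step 3: choosing $\mathbf{x}$ with $z=0$.} This is the step where $s\le q-2$ is needed, and I expect it to be the only delicate point. By symbol-uniformity (Theorem~\ref{thm:optimal_uniform}), each coordinate carries the symbol $0$ in exactly $q^2$ codewords. Hence the number of codewords with some zero in $Z$ is at most $(s+2)q^2\le q^3$. This count is strict, since $\mathbf{0}$ is counted $s+2\ge2$ times. So some codeword $\mathbf{x}$ has no zero on $Z$; such an $\mathbf{x}$ is automatically outside $T$, and it has $z=0$.

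Substituting $z=0$ gives $(s+2)\mid (s+1)q+s$. Since $(s+1)q+s\equiv -q-2\equiv -q \pmod{s+2}$, we conclude $(s+2)\mid q$.
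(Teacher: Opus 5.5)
Your proof follows the paper's argument essentially step for step. The $q$ words $\mathbf 0,\mathbf c^1,\dots,\mathbf c^{q-1}$ give a permutation of $\A$ in each column outside $Z$. A codeword with no zero on $Z$ has total agreement $d$ with them; you get its existence, as the paper does, from symbol-uniformity and $s+2\le q$. Each individual agreement is $0$ or $s+2$. Working with $T$ directly also spares you the paper's check that no other codeword vanishes on $Z$, since that maximality is never needed.

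There is, however, an arithmetic slip at the end, masked by a second one. First, since $n-d=s+2$, you have $n-(s+2)=d=(s+1)q$, not $(s+1)q+s$. Second, the congruence $-q-2\equiv -q \pmod{s+2}$ holds only when $(s+2)\mid 2$, that is, only when $s=0$.

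The divisibility you actually display, $(s+2)\mid (s+1)q+s$, is false for genuine examples. For a Denniston maximal arc with $q=8$ and $s=2$ it would require $4\mid 26$. The two errors happen to cancel, so your stated conclusion is right but the written derivation is not.

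The correct line is $(s+2)\mid (s+1)q$. Then $(s+1)q\equiv -q \pmod{s+2}$, or equivalently $\gcd(s+1,s+2)=1$, gives $(s+2)\mid q$. With this fix your proof is complete.
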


	\begin{proof}
		We may assume $\mathbf{0} \in \mathcal{C}$ and, by Theorem~\ref{thm:optimal_uniform}, that $\mathcal{C}$ is symbol-uniform. Here $n = (s+1)(q+1)+1$ and $d = (s+1)q$, so $n-d = s+2$; by Theorem~\ref{thm:optimal_weights} any two distinct codewords agree in exactly $0$ or $s+2$ coordinates. Write $a(\mathbf{u},\mathbf{v})$ for the number of coordinates in which $\mathbf{u}$ and $\mathbf{v}$ agree.

		The $q-1$ given codewords share a zero-set of size $n-d = s+2$, which we take to be $\{1,\dots,s+2\}$. Let $B$ be the set of all codewords vanishing on $\{1,\dots,s+2\}$. Such a codeword has at least $s+2$ zeros, so it is either $\mathbf{0}$ or a weight-$d$ codeword with zero-set \emph{exactly} $\{1,\dots,s+2\}$. Two of the latter already agree on $\{1,\dots,s+2\}$, so---agreements being $0$ or $s+2$---they differ in every one of their $(s+1)q$ support coordinates; as those entries are nonzero, at most $q-1$ such codewords exist. The hypothesis attains this bound, so $|B| = q$.

		Restricted to the columns $\{s+3,\dots,n\}$, the $q$ rows of $B$ pairwise differ in every column; with $q$ rows and $q$ symbols, each of these $(s+1)q$ columns is a permutation of the alphabet.

		Choose a codeword $x$ nonzero in each of the first $s+2$ coordinates (such an $x$ exists; see below). In columns $1,\dots,s+2$ every row of $B$ is $0$ while $x$ is not, so $x$ agrees with no row there; in each of the remaining $(s+1)q$ columns exactly one row of $B$ matches $x$. Hence
		\[
		\sum_{r \in B} a(x,r) \;=\; (s+1)q .
		\]
		Each summand is $0$ or $s+2$, so $(s+2) \mid (s+1)q$; as $\gcd(s+1,s+2)=1$, this gives $(s+2) \mid q$.

		Finally, such an $x$ exists. Let $A$ be the set of codewords having a zero somewhere in $\{1,\dots,s+2\}$. Each coordinate is zero in exactly $q^2$ codewords (symbol-uniformity), of which $q$ lie in $B$, so
		\[
		|A| \;\le\; |B| + \sum_{i=1}^{s+2}\bigl(q^2 - q\bigr) \;=\; q + (s+2)(q^2-q) \;\le\; q^3 - q^2 + q \;<\; q^3,
		\]
		the penultimate step using $s+2 \le q$. Hence some codeword avoids $A$.
	\end{proof}

	\begin{remark}
		The hypothesis $s \le q-2$ is indeed required. The simplex code $[q^2+q+1,3,q^2]_q$ is length-maximal with $s = q-1$ and $(s+2) = q+1 \nmid q$.
	\end{remark}

	Since a linear code is closed under scalar multiplication, the $q-1$ nonzero scalar multiples of a minimum-weight codeword always supply such a shared support. The same holds under the weaker hypothesis of scalar-invariance alone.

	\begin{corollary}\label{cor:homogeneous_divisibility}
		Let $\mathcal{C}$ be a length-maximal $(n, q^3, d)_q$ $A^s$MDS code over $\mathbb{F}_q$ with $s \le q-2$ that is invariant under scalar multiplication, i.e.\ $\alpha\,\mathcal{C} = \mathcal{C}$ for every $\alpha \in \mathbb{F}_q^{\ast}$. Then $(s+2) \mid q$.
	\end{corollary}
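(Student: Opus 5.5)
The plan is to reduce the statement to Lemma~\ref{lem:full_secant_divisibility}. That lemma already gives $(s+2)\mid q$ as soon as $\mathcal{C}$ contains $q-1$ weight-$d$ codewords with a common support. So the only task is to produce such a family from scalar-invariance.

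First we need a nonzero weight-$d$ codeword to work with, and for that we need $\mathbf{0}\in\mathcal{C}$. We cannot simply normalise a codeword to zero by a symbol permutation, because that would destroy scalar-invariance. Instead we argue directly.

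\begin{itemize}
\item Scalar-invariance gives $0\cdot\mathbf{c}$? No: only $\alpha\in\mathbb{F}_q^\ast$ is allowed, so we need another route.
\item If $\mathbf{c}\in\mathcal{C}$ and $\alpha\neq 1$, then $\mathbf{c}$ and $\alpha\mathbf{c}$ agree exactly on the zero coordinates of $\mathbf{c}$.
\item By symbol-uniformity (Theorem~\ref{thm:optimal_uniform}), some codeword has a zero in coordinate $1$.
\item To get $\mathbf{0}$ itself, note that $\mathcal{C}$ is a union of orbits under $\mathbb{F}_q^\ast$. Each nonzero word has an orbit of size exactly $q-1$, since $\alpha\mathbf{c}=\mathbf{c}$ with $\mathbf{c}\neq\mathbf{0}$ forces $\alpha=1$. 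Hence $|\mathcal{C}\setminus\{\mathbf{0}\}|$ is divisible by $q-1$.
\item Since $q^3\equiv 1 \pmod{q-1}$, we get $|\mathcal{C}|\equiv 1$ or $0 \pmod{q-1}$ according as $\mathbf{0}\in\mathcal{C}$ or not. For $q\ge3$ this forces $\mathbf{0}\in\mathcal{C}$. For $q=2$ the hypothesis $s\le q-2$ gives $s=0$, and then $(s+2)=2\mid q$ holds trivially.
\end{itemize}

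With $\mathbf{0}\in\mathcal{C}$, Corollary~\ref{cor:distance_distribution} says every nonzero word has weight $d$ or $n$. Pick a word with a zero entry, which exists by symbol-uniformity; it has weight $d$. Its $q-1$ scalar multiples $\alpha\mathbf{c}$ are distinct and all have the same support. This is exactly the hypothesis of Lemma~\ref{lem:full_secant_divisibility}, which then yields $(s+2)\mid q$.

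The only delicate point is justifying $\mathbf{0}\in\mathcal{C}$ without destroying invariance, which the orbit count above handles. The same argument also ensures the multiples lie in $\mathcal{C}$ as genuinely distinct codewords.
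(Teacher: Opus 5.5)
Your proposal is correct and follows the paper's proof: take a weight-$d$ codeword, note that its $q-1$ nonzero scalar multiples are distinct codewords with a common support, and invoke Lemma~\ref{lem:full_secant_divisibility}. Your orbit-counting argument shows $\mathbf{0}\in\mathcal{C}$ for $q\ge 3$, since $q^3\equiv 1\pmod{q-1}$; together with choosing a \emph{nonzero} word that vanishes in coordinate~$1$ (possible since $q^2>1$ codewords do), this supplies the existence of a weight-$d$ codeword, which the paper's one-line proof takes for granted. The step is genuinely needed, because normalising a codeword to $\mathbf{0}$ by symbol permutations would destroy scalar-invariance.
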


	\begin{proof}
		Let $c$ be a weight-$d$ codeword. Its orbit $\{\alpha c : \alpha \in \mathbb{F}_q^{\ast}\}$ consists of $q-1$ distinct codewords, each of weight $d$ and with the same support as $c$. Apply Lemma~\ref{lem:full_secant_divisibility}.
	\end{proof}

	\begin{remark}
		Contrapositively, by Lemma~\ref{lem:full_secant_divisibility}, the condition $(s+2)\mid q$ can fail to hold only if no minimum-weight support is shared by as many as $q-1$ codewords, equivalently, only if the design of Theorem~\ref{thm:zero_set_design} has no block of full multiplicity $q-1$.
	\end{remark}

	\subsection{Dimensions Two and Three: Inner Distance Distributions}
	\label{sec:dim23}

	For a nonlinear code, Hamming weight depends on the choice of a distinguished zero word. The invariant object is the inner distance distribution about a codeword $x\in\mathcal C$,
	\[
	A_i(x):=\bigl|\{c\in\mathcal C:\dist(x,c)=i\}\bigr|.
	\]
	Independent symbol permutations in the coordinates may be used to send any fixed $x$ to $\mathbf0$; this preserves all distances and length-maximality. Thus the pointed Hamming weight enumerator of the normalised code is exactly the inner distance distribution about $x$. The calculations below show, in particular, that this distribution is independent of $x$. 

	\begin{proposition}[Minimum-distance count]\label{prop:min_weight_count}
		Let $\mathcal{C}$ be a length-maximal $(n, q^k, d)_q$ $A^s$MDS code with $k \ge 2$, and set $t = n-d = s+k-1$. For every $x\in\mathcal C$, the number of codewords at distance $d$ from $x$ is
		\[
		A_d(x) = (q^2-1)\,\frac{\binom{n}{k-2}}{\binom{t}{k-2}}.
		\]
	\end{proposition}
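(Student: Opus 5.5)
Normalise $x$ to $\mathbf 0$ by independent symbol permutations, so that $A_d(x)$ becomes the number of weight-$d$ codewords. We count incidences between codewords at distance $d$ from $\mathbf0$ and $(k-2)$-subsets $T$ of coordinates on which the codeword vanishes. The plan has three steps.

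\textbf{Step 1 (vanishing count).} Fix a $(k-2)$-subset $T$ of coordinates. By Corollary~\ref{cor:orthogonal_array} (strength $k-1\ge k-2$), exactly $q^{k-(k-2)}=q^2$ codewords vanish on $T$. For $k=2$, $T=\emptyset$ and this is all $q^2$ codewords. Shortening successively at the coordinates of $T$ with symbol $0$ gives a code $\mathcal C_T$ with $q^2$ words and length $n-k+2$. By Lemma~\ref{lem:defect_preserved}, applied at each step as in the proof of Theorem~\ref{thm:optimal_uniform}, $\mathcal C_T$ is a length-maximal $(n-k+2,q^2,d)_q$ code with defect $s$. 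Lemma~\ref{lem:2d_bound} then says $\mathcal C_T$ is equidistant with distance $d$. Hence the $q^2-1$ nonzero codewords vanishing on $T$ all have shortened weight $d$. Their weight in $\mathcal C$ is the same, since they are zero on $T$. So every nonzero codeword vanishing on $T$ has weight exactly $d$.

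\textbf{Step 2 (only weight-$d$ words contribute).} Conversely, we need to know which weight-$d$ codewords are counted. A codeword of weight $d$ has exactly $t=n-d=s+k-1\ge k-2$ zeros. It therefore vanishes on exactly $\binom{t}{k-2}$ of the $(k-2)$-subsets $T$. A nonzero codeword of weight $w>d$ has $w\in\{n-k+3,\dots,n\}$ by Theorem~\ref{thm:optimal_weights}, so it has at most $k-3$ zeros. Such a codeword vanishes on no $(k-2)$-subset, which is consistent with Step 1.

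\textbf{Step 3 (double count).} Count the pairs $(c,T)$ with $c\neq\mathbf0$, $|T|=k-2$, and $c|_T=0$. Counting by $T$ gives $\binom{n}{k-2}(q^2-1)$. Counting by $c$ gives $A_d(\mathbf0)\binom{t}{k-2}$. Equating the two counts yields the formula.

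The only delicate point is Step 1. There we must ensure that shortening $k-2$ times keeps the code length-maximal, so that Lemma~\ref{lem:2d_bound} yields equidistance. This follows because the shortened lengths stay on the bound $(s+1)(q+1)+k'-2$ at each intermediate dimension $k'$, and Lemma~\ref{lem:defect_preserved} requires $k'\ge3$ at each step before the last. Everything else is routine.
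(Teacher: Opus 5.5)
Your proposal is correct and is essentially the paper's argument: the paper writes your double count as the moment identity $\sum_{\mathbf c\ne\mathbf 0}\binom{z(\mathbf c)}{k-2}=\binom{n}{k-2}(q^2-1)$, obtained from Corollary~\ref{cor:orthogonal_array}, and then uses Theorem~\ref{thm:optimal_weights} to see that only words with $z(\mathbf c)=t$ contribute. The equidistance part of your Step~1 (shortening to dimension two and invoking Lemma~\ref{lem:2d_bound}) is valid but not needed, since Step~2 already shows that every nonzero codeword vanishing on a $(k-2)$-set has weight $d$.
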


	\begin{proof}
		Normalize the code by sending the chosen word $x$ to $\mathbf0$. By Theorem~\ref{thm:optimal_weights} the zero-count $z(\mathbf c) = n-w(\mathbf c)$ of a nonzero codeword lies in $\{0,1,\dots,k-3\}\cup\{t\}$, the value $t$ occurring exactly at distance $d$ from $x$. Corollary~\ref{cor:orthogonal_array} gives exactly $q^{k-j}$ codewords that are zero in any prescribed $j\le k-2$ coordinates. Therefore
		\[
		\sum_{\mathbf c \ne \mathbf 0} \binom{z(\mathbf c)}{j} = \binom{n}{j}\bigl(q^{k-j}-1\bigr).
		\]
		Take $j=k-2$. Only the case $z=t$ contributes on the left, giving $\binom{t}{k-2}A_d(x)=\binom{n}{k-2}(q^2-1)$.
	\end{proof}

	\begin{corollary}[Dimensions two and three]\label{cor:dim23_enumerator}
		Let $\mathcal{C}$ be a length-maximal $(n, q^k, d)_q$ $A^s$MDS code. For every baseword $x\in\mathcal C$, normalise $x$ to $\mathbf0$ and write $A_i=A_i(x)$.
		\begin{enumerate}
			\item If $k = 2$, then $\mathcal{C}$ is a one-weight code: with $n = (s+1)(q+1)$ and $d = (s+1)q$,
			\[
			A_0 = 1, \qquad A_d = q^2 - 1.
			\]
			\item If $k = 3$, then $\mathcal{C}$ is a two-weight code: with $n = (s+1)(q+1)+1$ and $d = (s+1)q$,
			\[
			A_d = \frac{n(q^2-1)}{s+2}, \qquad A_n = q^3 - 1 - A_d,
			\]
			and every other nonzero weight is absent.
		\end{enumerate}
	\end{corollary}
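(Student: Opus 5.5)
Both parts follow from results already established: part (1) from the equality case of Lemma~\ref{lem:2d_bound}, and part (2) from Proposition~\ref{prop:min_weight_count} combined with Corollary~\ref{cor:distance_distribution}. Throughout, we fix the baseword $x$ and apply an independent symbol permutation in each coordinate sending $x$ to $\mathbf 0$. This preserves all distances and length-maximality, so $A_i(x)$ is exactly the number of codewords of weight $i$ in the normalised code, and $A_0=1$.

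For $k=2$, length-maximality means equality in Lemma~\ref{lem:2d_bound}. Hence the code is equidistant, with every pair of distinct codewords agreeing in exactly $s+1$ coordinates. So every nonzero codeword has weight $n-(s+1)=(s+1)(q+1)-(s+1)=(s+1)q=d$, and therefore $A_d=q^2-1$. The same value also follows from Proposition~\ref{prop:min_weight_count} with $k=2$, where both binomials equal $1$.

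For $k=3$, the length is $n=(s+1)(q+1)+1$, and $d=n-k+1-s=(s+1)q$. Corollary~\ref{cor:distance_distribution} with $k=3$ gives the weight set $\{d\}\cup\{n,\ldots,n-k+3\}=\{d\}\cup\{n\}$, so only the weights $d$ and $n$ occur among nonzero codewords. Proposition~\ref{prop:min_weight_count} with $k=3$ and $t=s+2$ gives
\[
A_d=(q^2-1)\binom{n}{1}\Big/\binom{s+2}{1}=\frac{n(q^2-1)}{s+2}.
\]
Since the $q^3-1$ nonzero codewords split between these two weights, $A_n=q^3-1-A_d$.

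As a consistency check, this $A_d$ equals the block count $b$ of Theorem~\ref{thm:zero_set_design}, since each weight-$d$ codeword contributes exactly one block. The count is also independent of $x$, because the normalisation was arbitrary.

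No step is a real obstacle, as all the substance lies in the earlier results. The only care needed is to confirm that $d\neq n$, so that the two weights are genuinely distinct: we have $n-d=s+2\ge 2$. It is also worth noting that $A_n$ may vanish for some parameters, so "two-weight" should be read as "at most two nonzero weights".
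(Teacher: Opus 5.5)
Your proposal is correct and follows essentially the paper's route: Theorem~\ref{thm:optimal_weights} (via Corollary~\ref{cor:distance_distribution}, or directly the equidistance in Lemma~\ref{lem:2d_bound} when $k=2$) restricts the nonzero weights, Proposition~\ref{prop:min_weight_count} supplies $A_d$, and the total count gives $A_n$. Your caveat that ``two-weight'' should be read as ``at most two nonzero weights'' is apt, since at $s=q-1$ one gets $A_d=(q^2+q+1)(q-1)=q^3-1$ and hence $A_n=0$ (the simplex code).
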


	\begin{proof}
		In each case Theorem~\ref{thm:optimal_weights} limits the nonzero weights, and Proposition~\ref{prop:min_weight_count} supplies $A_d$. For $k = 2$ the sole nonzero weight is $d$, so $A_d = q^2-1$. For $k = 3$ the nonzero weights are $d$ and $n$, and $\binom{n}{1}/\binom{t}{1} = n/(s+2)$ gives $A_d$, so   $A_n = q^3 - 1 - A_d$.
	\end{proof}

	\begin{remark}
		The dimension-three count $A_d = n(q^2-1)/(s+2)$ is exactly the number $b$ of blocks, counted with multiplicity, of the zero-set $2$-design of Theorem~\ref{thm:zero_set_design}. The two-weight enumerator and the $2$-design therefore carry the same information.
	\end{remark}

	\begin{remark}\label{rem:s0_mds}
		At $s=0$ a length-maximal code is a $(q+k-1,q^k,q)_q$ MDS code. Its inner distance distribution is determined by $n$, $k$, and $q$ alone \cite{Bush1952,Alderson2005Extending,Alderson2020}; in orthogonal-array language, an MDS code is an $\operatorname{OA}(q^k,n,q,k)$ of index one.
	\end{remark}

	For $k \ge 4$ the intermediate classes $A_{n-1}, A_{n-2}, \dots$ may also be determined. We compute these next.

	\subsection{Dimension Four: Distance Distribution and Existence}
	\label{sec:dim4}

	When $k=4$ we may determine the entire inner distance distribution and indicate when full-distance codewords may occur and when no length-maximal code exists. Fix any baseword, normalise it to $\mathbf0$, and write $A_w$ for the number of codewords at distance $w$ from it. Then $n=(s+1)(q+1)+2$, $d=(s+1)q$, and (Lemma~\ref{lem:divisibility}) $s\le q-1$. Put $t=n-d=s+3$. By Theorem~\ref{thm:optimal_weights}, the nonzero distances are $d$, $n-1$, and $n$, corresponding after normalisation to $t$, $1$, and $0$ zeros.

	\begin{theorem}[$k=4$ inner distance distribution]\label{thm:dim4_enumerator}
		With the notation above,
		\[
		A_d = \frac{n(n-1)(q^2-1)}{(s+3)(s+2)}, \qquad A_{n-1} = n(q^3-1) - (s+3)\,A_d,
		\]
		and the number of full-weight codewords is
		\[
		A_n = \frac{q(q-1)\,(t-q-1)\bigl((q+1)t-(4q+2)\bigr)}{t} \;=\; \frac{q(q-1)\,(s+2-q)\bigl((q+1)s-(q-1)\bigr)}{s+3}.
		\]
	\end{theorem}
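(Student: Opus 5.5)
The plan is to use the binomial-moment identities from the proof of Proposition~\ref{prop:min_weight_count}, applied to the zero-count $z(\mathbf c)=n-w(\mathbf c)$ of the nonzero codewords of the normalised code. By Theorem~\ref{thm:optimal_weights} with $k=4$, each nonzero codeword has $z\in\{0,1,t\}$ with $t=s+3$. These three values occur $A_n$, $A_{n-1}$ and $A_d$ times. Corollary~\ref{cor:orthogonal_array} gives strength $3$, so for $j\in\{0,1,2\}$ exactly $q^{4-j}$ codewords vanish on any prescribed $j$ coordinates. Hence
\[
\sum_{\mathbf c\ne\mathbf 0}\binom{z(\mathbf c)}{j}=\binom{n}{j}\bigl(q^{4-j}-1\bigr),\qquad j=0,1,2.
\]
These are three linear equations in the three unknowns $A_n,A_{n-1},A_d$, and the system is triangular.

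The equations are solved from $j=2$ downward. For $j=2$, only $z=t$ contributes, which gives $\binom{t}{2}A_d=\binom n2(q^2-1)$. This is exactly the stated $A_d=n(n-1)(q^2-1)/((s+3)(s+2))$, and it is also Proposition~\ref{prop:min_weight_count} with $k=4$. For $j=1$, the contributions come from $z=1$ and $z=t$, giving $A_{n-1}+tA_d=n(q^3-1)$, which is the stated formula for $A_{n-1}$. For $j=0$, the total count gives
\[
A_n=q^4-1-A_{n-1}-A_d=q^4-1-n(q^3-1)+(t-1)A_d .
\]

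The only real work, and the step I expect to be the obstacle, is reducing this expression for $A_n$ to the claimed factored form. I would express everything in $t$ and $q$, using $s+1=t-2$, $n=(t-2)(q+1)+2$ and $n-1=(t-2)(q+1)+1$. Then $(t-1)A_d=n(n-1)(q^2-1)/t$, so
\[
tA_n=t\bigl(q^4-1-n(q^3-1)\bigr)+n(n-1)(q^2-1),
\]
which is a polynomial in $t$ of degree $2$. I would check that it vanishes at $t=q+1$, where $n=q^2+q+1$; here the code should have only weights $d$ and $n-1$, as for ovoid-type parameters. I would also check that it vanishes at $t=(4q+2)/(q+1)$. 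After that I would compare leading coefficients in $t$, which should be $q(q-1)(q+1)$, and compare the constant terms as a cross-check. Together these give
\[
A_n=q(q-1)(t-q-1)\bigl((q+1)t-(4q+2)\bigr)/t.
\]
Substituting $t=s+3$ then gives the second form, since $t-q-1=s+2-q$ and $(q+1)t-(4q+2)=(q+1)s-(q-1)$.

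Since these are equal-degree polynomial identities, I would verify the roots by direct substitution rather than expanding fully. Finally, none of these quantities depends on the chosen baseword, so the distribution is the same about every $x\in\mathcal C$.
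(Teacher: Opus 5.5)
Your proposal is correct and follows essentially the paper's own route: the $j=1$ moment (the paper gets it directly from symbol-uniformity), the $j=2$ moment (Proposition~\ref{prop:min_weight_count}) and the total count. Your check of two roots plus the leading coefficient $q(q-1)(q+1)$ is a clean way to carry out the step the paper only calls ``substituting and simplifying''. One arithmetic slip needs correcting: at $t=q+1$ (i.e.\ $s=q-2$) the length is $n=(t-2)(q+1)+2=q^2+1$, not $q^2+q+1$; with $n=q^2+1$ the quadratic does vanish there, and it also vanishes at $t=(4q+2)/(q+1)$, where $n=2q+2$.
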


	\begin{proof}
		We may assume $\mathbf 0 \in \mathcal C$, and by Theorem~\ref{thm:optimal_uniform} that $\mathcal C$ is symbol-uniform. With reference to Corollary \ref{cor:distance_distribution}, for nonzero $\mathbf{c}\in \mathcal C$ put $z(\mathbf c) = n - w(\mathbf c) \in \{0,1,t\}$; the three weight classes have sizes $A_n$, $A_{n-1}$, and $A_d$, with $A_n + A_{n-1} + A_d = q^4-1$.

		By symbol-uniformity each coordinate carries the symbol $0$ in exactly $q^3$ codewords, hence in $q^3-1$ nonzero codewords. Summing over the $n$ coordinates,
		 
		\begin{equation}
			\sum_{\mathbf c \ne \mathbf 0} z(\mathbf c) \;=\; n(q^3-1) \;=\; A_{n-1} + t\,A_d. \label{eqn1}
		\end{equation}

		Proposition~\ref{prop:min_weight_count}, in the case $k=4$, gives the minimum-weight count
		\[
		A_d = (q^2-1)\,\frac{\binom n2}{\binom t2} = \frac{n(n-1)(q^2-1)}{(s+3)(s+2)}.
		\]
		Equation~\eqref{eqn1} then yields $A_{n-1} = n(q^3-1) - t\,A_d$, and the total count gives $A_n = q^4-1-A_d-A_{n-1}$. Substituting and simplifying (using $n = (q+1)(t-2)+2$) yields the closed form for $A_n$, whose two expressions agree via $t = s+3$.
	\end{proof}

	As $s$ ranges over the admissible values $1 \le s \le q-1$, the sign of $A_n$ changes exactly once, yielding the following.

	\begin{corollary}\label{cor:dim4_trichotomy}
		No length-maximal $(n,q^4,d)_q$ $A^s$MDS code exists with $1 \le s \le q-3$. Consequently, if $\mathcal C$ is a length-maximal $(n,q^4,d)_q$ $A^s$MDS code, then exactly one of the following holds.
		\begin{enumerate}
			\item $s=0$, $q>2$, and $36\mid q$. (For $q=2$ the value $s=0$ coincides with $s=q-2$ and falls under case~(2).)
			\item $s = q-2$, and $A_n = 0$: $\mathcal C$ has no full-weight codeword.
			\item $s = q-1$, $(q+2)\mid 36$,  $A_{n-1} = 0$ and $A_n = \dfrac{q^2(q-1)^2}{q+2} > 0$, so $\mathcal C$ is a two-weight code with full-weight codewords.
		\end{enumerate}
	\end{corollary}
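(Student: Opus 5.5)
The plan is to read everything off the closed form for $A_n$ in Theorem~\ref{thm:dim4_enumerator}, namely
\[
A_n=\frac{q(q-1)(s+2-q)\bigl((q+1)s-(q-1)\bigr)}{s+3},
\]
together with $A_n\ge 0$, $A_{n-1}\ge 0$, integrality, and the constraint $s\le q-1$ from Lemma~\ref{lem:divisibility}.

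First, the sign analysis. For $q\ge 2$ and $s\ge 1$ the factor $(q+1)s-(q-1)$ is positive, since $(q+1)s\ge q+1>q-1$. The factor $s+2-q$ is negative exactly when $s\le q-3$. So for $1\le s\le q-3$ we get $A_n<0$, a contradiction; this proves nonexistence in that range. The remaining values are then $s=0$, $s=q-2$, and $s=q-1$.

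The three cases are handled as follows.
\begin{itemize}
\item \emph{$s=0$.} The code is a $(q+3,q^4,q)_q$ MDS code of maximal length. Theorem~\ref{thm:bruen_silverman}(2) (with $k=4$) gives $36\mid q$ whenever $q>2$. For $q=2$ we have $s=0=q-2$, so this case is absorbed into the next one, as noted in the statement.
\item \emph{$s=q-2$.} The factor $s+2-q$ vanishes, so $A_n=0$.
\item \emph{$s=q-1$.} Here $t=q+2$, and $n=q(q+1)+2=q^2+q+2$. Substituting gives
\[
A_n=\frac{q(q-1)\cdot 1\cdot\bigl((q+1)(q-1)-(q-1)\bigr)}{q+2}=\frac{q(q-1)\cdot q(q-1)}{q+2}=\frac{q^2(q-1)^2}{q+2}>0 .
\]
\end{itemize}

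Next, I would verify $A_{n-1}=0$ in the case $s=q-1$. We have $A_d=n(n-1)(q^2-1)/((q+2)(q+1))$ and $n-1=q^2+q+1$. I expect $n=q^2+q+2$ and $q+2$ to interact nicely, and the check is that $n(q^3-1)=(q+2)A_d$. That is,
\[
(q^3-1)(q+1)=(n-1)(q^2-1)=(q^2+q+1)(q-1)(q+1),
\]
which holds because $q^3-1=(q-1)(q^2+q+1)$. So $A_{n-1}=0$, and the code is two-weight with weights $d$ and $n$.

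For the divisibility condition, integrality of $A_n$ requires $(q+2)\mid q^2(q-1)^2$. Reduce modulo $q+2$ using $q\equiv -2$: then $q^2(q-1)^2\equiv 4\cdot 9=36$, so $(q+2)\mid 36$.

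I expect the main obstacle to be this integrality step. One must make sure it is $A_n$ itself that must be an integer, which it is, since it counts codewords. One must also make sure the reduction modulo $q+2$ is done correctly.

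Finally, the cases are mutually exclusive as stated. For $q>2$ the values $0$, $q-2$, $q-1$ are distinct. For $q=2$ only $s\in\{0,1\}$ occur, and these are covered by cases (2) and (3).
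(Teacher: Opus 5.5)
Your proposal is correct and follows essentially the same route as the paper. In both, the sign of the closed form for $A_n$ rules out $1\le s\le q-3$, Bruen--Silverman handles $s=0$, direct substitution settles $s=q-2$ and $s=q-1$, and reducing $q^2(q-1)^2\equiv 36 \pmod{q+2}$ gives $(q+2)\mid 36$. Your explicit check that $A_{n-1}=0$ at $s=q-1$, via $q^3-1=(q-1)(q^2+q+1)$, simply fills in a substitution the paper leaves implicit.
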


	\begin{proof}
		By Lemma~\ref{lem:divisibility}, $s \le q-1$. For $s \ge 1$, the sign of $A_n$ (Theorem~\ref{thm:dim4_enumerator}) is that of $t-q-1 = s+2-q$, which is negative for $1 \le s \le q-3$; since $A_n \ge 0$, no code exists in that range, and only $s \in \{0,\, q-2,\, q-1\}$ remains. Case (2) is immediate from Theorem~\ref{thm:dim4_enumerator}. Case (1) is part~(2) of Theorem~\ref{thm:bruen_silverman} with $k=4$; note that for $q = 2$ the value $s=0$ falls under case (2). For $s = q-1$, substitution gives $A_n = q^2(q-1)^2/(q+2)$ and, from Theorem~\ref{thm:dim4_enumerator}, $A_{n-1} = 0$. Moreover, integrality of $A_n$ requires $(q+2)\mid q^2(q-1)^2$. Observing  $q^2(q-1)^2 \equiv 36 \pmod{q+2}$ completes the result.
	\end{proof}
	
	\begin{remark}
		For each prime power $q$, codes as in part~(2) of Corollary~\ref{cor:dim4_trichotomy} exist owing to ovoids. An ovoid in $\mathrm{PG}(3,q)$ is a set of $q^2+1$ points meeting every line in at most two points, an elliptic quadric is a classical example \cite{Hirschfeld1985}. Every plane meets an elliptic quadric in at most $q+1$ points. Hence the points of the quadric form the projective system of a linear length-maximal $(q^2+1,q^4,q^2-q)_q$ $A^{q-2}$MDS code.
	\end{remark}

	\begin{corollary}\label{cor:dim4_existence}
		For an integer $q > 2$ with $36 \nmid q$, a length-maximal $(n,q^4,d)_q$ $A^s$MDS code exists only if $s \in \{q-2,\, q-1\}$. In the case $s=q-1$ additionally $(q+2)\mid 36$.
	\end{corollary}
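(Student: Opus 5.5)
The plan is to read Corollary~\ref{cor:dim4_existence} off the trichotomy of Corollary~\ref{cor:dim4_trichotomy}, with one extra step to rule out $s \ge q$. Let $\mathcal C$ be a length-maximal $(n,q^4,d)_q$ $A^s$MDS code with $q>2$ and $36\nmid q$.

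First, since $k=4\ge 3$, Lemma~\ref{lem:divisibility} gives $s\le q-1$, so $s\in\{0,1,\dots,q-1\}$. Corollary~\ref{cor:dim4_trichotomy} then places $\mathcal C$ in exactly one of three cases.

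\begin{enumerate}
\item Case~(1) has $s=0$ and forces $36\mid q$ by Theorem~\ref{thm:bruen_silverman}(2). This is excluded by hypothesis. Note that $q>2$, so $s=0$ does not coincide with $s=q-2$ here.
\item Case~(2) gives $s=q-2$.
\item Case~(3) gives $s=q-1$ together with $(q+2)\mid 36$. This comes from integrality of $A_n=q^2(q-1)^2/(q+2)$ in Theorem~\ref{thm:dim4_enumerator}, using $q\equiv -2 \pmod{q+2}$, so that $q^2(q-1)^2\equiv 4\cdot 9=36$.
\end{enumerate}

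This leaves exactly $s\in\{q-2,q-1\}$, with the extra divisibility condition when $s=q-1$.

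The only point needing care is that the range $1\le s\le q-3$ has already been excluded inside Corollary~\ref{cor:dim4_trichotomy}. That exclusion relies on the sign of the factor $s+2-q$ in $A_n$. The other factor, $(q+1)s-(q-1)$, is positive for $s\ge1$, so the sign is indeed that of $s+2-q$, and $A_n<0$ is impossible. I expect no genuine obstacle beyond confirming this sign. The argument is essentially a restatement of the trichotomy under the hypothesis $36\nmid q$.
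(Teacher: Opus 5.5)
Your proposal is correct and takes the same approach as the paper: the corollary follows immediately from Corollary~\ref{cor:dim4_trichotomy}, because the hypothesis $36\nmid q$ discards case~(1). Your extra checks (that $s\le q-1$ by Lemma~\ref{lem:divisibility}, and that the sign of $A_n$ is the sign of $s+2-q$ for $s\ge 1$) are already part of the proof of that trichotomy, so they are redundant but harmless.
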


	\begin{proof}
		Immediate from Corollary~\ref{cor:dim4_trichotomy}.
	\end{proof}

	\begin{remark}\label{rem:dim4_additive}
		In the additive (in particular linear) setting, Alderson and Ball \cite{alderson2026setssubspacesrestrictedhyperplane} establish the geometric counterpart of Corollary~\ref{cor:dim4_trichotomy}: a length-maximal additive code of dimension $4$ over $\mathbb F_{q^h}$ with $q>2$ must satisfy $n-d (=s+3) = q^h+1$ and have no full-weight (zero-secant) codewords. At $h=1$, Corollary~\ref{cor:dim4_trichotomy} recovers most of this combinatorially, for all (not necessarily additive) codes: the defect is confined to $\{0,\,q-2,\,q-1\}$, and in the case $s = q-2$ (that is, $n-d = q+1$) there is no full-weight codeword. Only the exclusion of $s = q-1$ is particular to the additive setting (cf.\ Remark~\ref{rem:dim4_q1_candidates}).
	\end{remark}

	\begin{remark}\label{rem:dim4_q1_candidates}
		Case~(3) of Corollary~\ref{cor:dim4_trichotomy} is the most restrictive. A length-maximal code of dimension four with $s = q-1$ has parameters $(q^2+q+2,\, q^4,\, q^2)_q$ and the forced two-weight enumerator $A_{n-1} = 0$, $A_n = q^2(q-1)^2/(q+2)$, $A_d = q^4-1-A_n$, and it can exist only if $(q+2)\mid 36$. As $q \ge 2$ gives $q+2 \ge 4$, the admissible alphabet sizes are exactly $q \in \{2,4,7,10,16,34\}$ (Table~\ref{tab:dim4_candidates}). The case $q = 2$ is realised by the extended Hamming code $[8,4,4]_2$, whose single full-weight codeword is the all-ones word. For every $q > 2$, by contrast, Remark~\ref{rem:dim4_additive} forbids a full-weight codeword in any additive (hence any linear) length-maximal dimension-four code, and for $q \in \{10,34\}$ the alphabet cannot be a field. So each of the five candidates with $q > 2$ would be \emph{genuinely nonlinear}. They thus form a complete, finite list of targets for the existence problem central to this paper, the smallest being a $(22,256,16)_4$ code.
	\end{remark}

	\begin{table}[htbp]
		\centering
		\renewcommand{\arraystretch}{1.15}
		\caption{The full-weight dimension-four family ($s = q-1$): length-maximal $(q^2+q+2,\, q^4,\, q^2)_q$ codes, which can exist only when $(q+2)\mid 36$. For $q > 2$ any such code is genuinely nonlinear (Remark~\ref{rem:dim4_additive}).}
		\label{tab:dim4_candidates}
		\begin{tabular}{@{}rclrrl@{}}
			\toprule
			$q$ & $q+2$ & parameters & $A_d$ & $A_n$ & existence \\
			\midrule
			$2$  & $4$  & $(8,16,4)_2$            & $14$      & $1$     & extended Hamming $[8,4,4]_2$ \\
			$4$  & $6$  & $(22,256,16)_4$         & $231$     & $24$    & open \\
			$7$  & $9$  & $(58,2401,49)_7$        & $2204$    & $196$   & open \\
			$10$ & $12$ & $(112,10^4,100)_{10}$   & $9324$    & $675$   & open; non-prime-power $q$ \\
			$16$ & $18$ & $(274,16^4,256)_{16}$   & $62335$   & $3200$  & open \\
			$34$ & $36$ & $(1192,34^4,1156)_{34}$ & $1301366$ & $34969$ & open; non-prime-power $q$ \\
			\bottomrule
		\end{tabular}
	\end{table}

	\subsection{Dimension Five: Non-Existence}
	\label{sec:dim5}

	The dimension $k=4$ enumerator propagates under shortening to a non-existence theorem in dimension $k=5$. Throughout, $\mathcal C$ is a length-maximal $(n,q^5,d)_q$ $A^s$MDS code, so $n = (s+1)(q+1)+3$, $d = (s+1)q$, and (Lemma~\ref{lem:divisibility}) $s \le q-1$. Set $t := n-d = s+4$; by Theorem~\ref{thm:optimal_weights} the nonzero weights are $d, n-2, n-1, n$, that is, $t, 2, 1, 0$ zeros with respective counts $A_d, A_{n-2}, A_{n-1}, A_n$.

	\begin{lemma}[$k=5$ inner distance distribution]\label{lem:dim5_enumerator}
		With the notation above,
		\[
		A_d = \frac{n(n-1)(n-2)(q^2-1)}{t(t-1)(t-2)}, \qquad
		A_{n-2} = \binom n2(q^3-1) - \binom t2\,A_d,
		\]
		\[
		A_{n-1} = n(q^4-1) - 2A_{n-2} - t\,A_d, \qquad
		A_n = q^5-1 - A_{n-1} - A_{n-2} - A_d.
		\]
	\end{lemma}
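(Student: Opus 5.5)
The plan is to follow the moment method used for Theorem~\ref{thm:dim4_enumerator}, with one extra moment. Normalise the chosen baseword to $\mathbf 0$. By Theorem~\ref{thm:optimal_uniform} and Corollary~\ref{cor:orthogonal_array}, $\mathcal C$ is symbol-uniform and is an $\operatorname{OA}(q^5,n,q,4)$. For nonzero $\mathbf c$, the zero-count $z(\mathbf c)=n-w(\mathbf c)$ lies in $\{0,1,2,t\}$ by Corollary~\ref{cor:distance_distribution}. The four unknowns $A_n,A_{n-1},A_{n-2},A_d$ are the numbers of nonzero codewords with $z=0,1,2,t$ respectively.

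The key identity is the one derived in the proof of Proposition~\ref{prop:min_weight_count}:
\[
\sum_{\mathbf c\ne\mathbf 0}\binom{z(\mathbf c)}{j}=\binom{n}{j}\bigl(q^{5-j}-1\bigr),\qquad 0\le j\le 3 .
\]
It holds because exactly $q^{5-j}$ codewords vanish on any prescribed $j\le k-2=3$ coordinates. These equations are triangular in the unknowns, and I would solve them from $j=3$ down to $j=0$.

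\begin{itemize}
\item For $j=3$, only $z=t$ contributes, since $t=s+4\ge 4>2$. This gives $\binom t3A_d=\binom n3(q^2-1)$, which is the stated formula for $A_d$ (equivalently, Proposition~\ref{prop:min_weight_count} with $k=5$).
\item For $j=2$, the words with $z=2$ contribute $1$ each and the words with $z=t$ contribute $\binom t2$ each. This gives $A_{n-2}+\binom t2A_d=\binom n2(q^3-1)$.
\item For $j=1$, we get $A_{n-1}+2A_{n-2}+tA_d=n(q^4-1)$.
\item For $j=0$, the total count gives $A_n=q^5-1-A_{n-1}-A_{n-2}-A_d$.
\end{itemize}

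Each line rearranges directly to the corresponding formula in the statement.

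There is no real obstacle here; the work is bookkeeping. The one point to check is that $t\ge 3$, so that $\binom t3\ne 0$ and the values $z\in\{0,1,2\}$ contribute nothing to the $j=3$ moment. This holds since $s\ge0$ gives $t=s+4\ge4$. One should also note that the derivation does not depend on which baseword was normalised, so the distribution is the same about every codeword.
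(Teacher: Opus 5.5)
Your proposal is correct and follows essentially the same route as the paper: normalise the baseword to $\mathbf 0$, restrict the zero-counts to $\{0,1,2,t\}$, and solve the triangular system of binomial moment identities from the top moment down. The only cosmetic difference is that the paper gets the $j=1,2$ moments by shortening at all-zero coordinates and quoting symbol-uniformity, and cites Proposition~\ref{prop:min_weight_count} for $A_d$. You take all moments directly from the orthogonal-array property of Corollary~\ref{cor:orthogonal_array}, which is itself proved by that same shortening argument.
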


	\begin{proof}
		As in Theorem~\ref{thm:dim4_enumerator}, take $\mathbf 0 \in \mathcal C$, assume $\mathcal C$ symbol-uniform, and write $z(\mathbf c) \in \{0,1,2,t\}$ for the number of zero coordinates in $\mathbf c$. Proposition~\ref{prop:min_weight_count}, in the case $k=5$, gives the minimum-weight count
		\[
		A_d = (q^2-1)\,\frac{\binom n3}{\binom t3} = \frac{n(n-1)(n-2)(q^2-1)}{t(t-1)(t-2)}.
		\]
		For the remaining classes, shortening at $j-1$ all-zero coordinates yields, by Lemma~\ref{lem:defect_preserved}, a length-maximal code of dimension $6-j \ge 3$, necessarily symbol-uniform (Theorem~\ref{thm:optimal_uniform}), so any $j$ coordinates are all-zero in exactly $q^{5-j}$ codewords; therefore
		\[
		\sum_{\mathbf c \ne \mathbf 0}\binom{z(\mathbf c)}{j} = \binom nj\,(q^{5-j}-1), \qquad j=1,2.
		\]
		The $j=2$ identity gives $A_{n-2}$, $j=1$ gives $A_{n-1}$, and the total count $q^5-1$ then provides $A_n$.
	\end{proof}

	\begin{lemma}\label{lem:dim5_critical}
		At the two largest admissible defects,
		\[
		A_n\big|_{s=q-2} = -\frac{q^3(q-1)^2(q-3)}{2(q+2)}, \qquad
		A_n\big|_{s=q-1} = -\frac{q^2(q-1)^2(q^2-q+2)}{q+3}.
		\]
		Hence $A_n|_{s=q-1} < 0$ for every $q \ge 2$, while $A_n|_{s=q-2} < 0$ for $q \ge 4$ and $= 0$ for $q = 3$.
	\end{lemma}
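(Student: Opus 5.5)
The plan is a direct substitution into Lemma~\ref{lem:dim5_enumerator}, followed by simplification and a sign analysis of the resulting closed forms. Everything is fixed by $q$ once $s$ is specialised, so the whole argument is a verification of two rational identities in $q$.
\begin{itemize}
\item For $s=q-1$: $n=q^2+q+3$, $t=q+3$, $d=q^2$.
\item For $s=q-2$: $n=q^2+2$, $t=q+2$, $d=q^2-q$.
\end{itemize}

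To organise the computation I will avoid computing $A_{n-2}$ and $A_{n-1}$ separately. Instead I will use the alternating-sum form of the moment identities. Set $S_j=\sum_{\mathbf c\ne\mathbf 0}\binom{z(\mathbf c)}{j}$. The proof of Lemma~\ref{lem:dim5_enumerator} (together with Proposition~\ref{prop:min_weight_count} for $j=3$) gives $S_j=\binom nj(q^{5-j}-1)$ for $0\le j\le 3$. Since $z(\mathbf c)\in\{0,1,2,t\}$, we have $\sum_{j=0}^{3}(-1)^j\binom{z}{j}=[z=0]$ for $z\in\{0,1,2\}$. For $z=t$ the same sum equals $-\binom{t-1}{3}$. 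Hence
\[
A_n=\sum_{j=0}^{3}(-1)^j\binom nj\bigl(q^{5-j}-1\bigr)+\binom{t-1}{3}A_d ,
\]
which is equivalent to the formulas of Lemma~\ref{lem:dim5_enumerator} but needs only $A_d$.

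Next I substitute $A_d=(q^2-1)\binom n3/\binom t3$ and the two parameter sets. The term $\binom{t-1}{3}/\binom t3=(t-3)/t$ simplifies, leaving a single denominator: $t=q+3$ in the first case and $t=q+2$ in the second. The factor $2$ in the denominator for $s=q-2$ comes from the binomials. I then clear denominators and factor the numerator polynomial in $q$. I expect $q^2(q-1)^2$ to split off in both cases, since $A_n$ should vanish at $q=1$ to second order and at $q=0$. The remaining factor should be $q(q-3)$, resp.\ $q^2-q+2$.

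This factoring step is the only real obstacle. It is routine but error-prone polynomial algebra of degree about $8$, and I would verify it against small values:
\begin{itemize}
\item At $q=3$, $s=1$ the parameters are those of the dual ternary Golay code $[11,5,6]_3$. Its weights $0,6,9$ with $A_6=132$, $A_9=110$ give $A_{11}=0$, matching the claimed zero.
\item At $q=2$ both formulas can be checked numerically.
\end{itemize}

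The sign conclusions are then immediate. We have $q^2-q+2=(q-\tfrac12)^2+\tfrac74>0$, together with $q\ge2$, $q+2>0$ and $q+3>0$. So $A_n|_{s=q-1}<0$ for all $q\ge2$. The expression $A_n|_{s=q-2}$ has the sign of $-(q-3)$, hence it is negative for $q\ge4$ and zero at $q=3$.
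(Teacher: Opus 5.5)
Your proposal is correct and is essentially the paper's argument. Both substitute the two parameter sets into the moment identities of Lemma~\ref{lem:dim5_enumerator} and simplify; the only difference is that you eliminate the intermediate classes in one step via the alternating sum, equivalently $A_n=\sum_{j=0}^{2}(-1)^j\binom{n}{j}(q^{5-j}-1)-\tfrac{3}{t}\binom{n}{3}(q^2-1)$. The paper instead computes $A_{n-2}$ and $A_{n-1}$ explicitly, finding $A_{n-1}=0$ at $s=q-2$ and $A_{n-2}=0$ at $s=q-1$ (values that reappear in Remark~\ref{rem:dim5_ball}). The factorisation you anticipate does hold: clearing denominators gives $2(q+2)A_n=-q^3(q-1)^2(q-3)$ and $2(q+3)A_n=-2q^2(q-1)^2(q^2-q+2)$, with the degree-$8$ and degree-$7$ terms cancelling, so your sign analysis completes the proof.
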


	\begin{proof}
		Both are direct substitutions into Lemma~\ref{lem:dim5_enumerator}. At $s=q-2$ we have $n = q^2+2$, $t = q+2$, giving $A_{n-1}=0$, $A_{n-2} = \tfrac12(q^2+2)(q^2+1)(q-1)$, and $A_d = (q^2+2)(q^2+1)q(q-1)/(q+2)$. As such,  $A_n = (q^5-1) - A_d - A_{n-2}$ collapses to the stated form, whose numerator is $-q^3(q-1)^2(q-3)$. \\
		At $s=q-1$ we have $n = q^2+q+3$, $t = q+3$, giving $A_{n-2}=0$ and $A_d = (q^2+q+3)(q^2+q+2)(q^2+q+1)(q-1)/\bigl((q+3)(q+2)\bigr)$. So  $A_n = (q^5-1) - A_{n-1} - A_d$ simplifies to the stated form. The observation $q^2-q+2 = (q-\tfrac12)^2 + \tfrac74>0$ gives the sign claims.
	\end{proof}
	
		\begin{theorem}\label{thm:dim5existence}
		If $\mathcal{C}$ is a length-maximal $(n,q^k,d)_q$ A$^s$MDS code with $k\ge 5$, then exactly one of the following holds.
		\begin{enumerate}
			\item $s=0$ and $q=2$; here $\mathcal{C}$ is the trivial $(k+1,2^k,2)_2$ MDS code, necessarily equivalent to linear.
			\item $s=0$ and $36\mid q$; here $\mathcal{C}$ is necessarily genuinely nonlinear (Remark~\ref{rem:bs_nonlinear}).
			\item $s=1$ and $q=3$.
		\end{enumerate}
	\end{theorem}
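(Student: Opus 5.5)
First I reduce to dimension five. If $k\ge 5$, Lemma~\ref{lem:defect_preserved} and Theorem~\ref{thm:optimal_uniform} let us shorten successively $k-5$ times and obtain a length-maximal $(n',q^5,d)_q$ $A^s$MDS code with the same $q$ and $s$. Every condition on $(q,s)$ derived in dimension five therefore transfers to $\mathcal C$. In dimension five, Lemma~\ref{lem:divisibility} gives $s\le q-1$.

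Next I use shortening once more, from dimension five to dimension four. The shortened code is length-maximal of dimension four with defect $s$, so Corollary~\ref{cor:dim4_trichotomy} gives $s\in\{0,q-2,q-1\}$.

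I now treat these three values separately.
\begin{itemize}
\item For $s=q-1$, Lemma~\ref{lem:dim5_critical} gives $A_n<0$ for all $q\ge2$, which is impossible.
\item For $s=q-2$ with $q\ge4$, Lemma~\ref{lem:dim5_critical} again gives $A_n<0$, which is impossible. So $s=q-2$ can occur only for $q\le 3$.
\item If $q=3$, then $s=q-2$ means $s=1$, which is case (3).
\item If $q=2$, then $s=q-2$ means $s=0$, which falls under the MDS analysis below.
\item For $s=0$, the code is a $(q+k-1,q^k,q)_q$ MDS code. If $q>2$, Theorem~\ref{thm:bruen_silverman}(2) forces $36\mid q$, which is case (2). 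Genuine nonlinearity then follows from Remark~\ref{rem:bs_nonlinear}.
\item For $s=0$ with $q=2$, the code is a $(k+1,2^k,2)_2$ code. Any two codewords differ in at least two places, so deleting the last coordinate is injective onto $\F_2^k$. Hence the code is $\{(x,f(x))\}$ with $f(x)\ne f(y)$ whenever $x,y$ are adjacent. Since the hypercube is connected and bipartite, $f$ is either parity or its complement. After a symbol permutation in the last coordinate, the code is the even-weight code, which is case (1).
\end{itemize}

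For $q=3$ I must also rule out $s=0$. This follows from Theorem~\ref{thm:bruen_silverman}(1), since $4\nmid 3$. A direct check is also possible: with $q=3$, $s=0$ the code is $(7,3^5,3)_3$, the shortened code is $(6,3^4,3)_3$, and $q=3$ is not $\equiv 0 \pmod{36}$.

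The cases are mutually exclusive: (1) and (3) are separated by $q$, and (2) requires $36\mid q$.

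The main obstacle is showing the case analysis is exhaustive. This depends on invoking the dimension-four trichotomy on a shortened code, and on making sure the small-$q$ instances (especially $q=2,3$, where $q-2$ coincides with $0$ or $1$) are not mishandled.
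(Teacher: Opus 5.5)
Your proposal is correct and follows essentially the same route as the paper. You shorten to dimension four to get $s\in\{0,q-2,q-1\}$ from Corollary~\ref{cor:dim4_trichotomy}, use Lemma~\ref{lem:dim5_critical} in dimension five to exclude $s=q-1$ and $s=q-2$ for $q\ge 4$, and invoke Theorem~\ref{thm:bruen_silverman}(2) for $s=0$, $q>2$. The only differences are minor: you prove directly, via the unique proper $2$-colouring of the hypercube, that a $(k+1,2^k,2)_2$ code is equivalent to the even-weight code, where the paper cites \cite[Theorem~4]{Alderson2020}; and your separate paragraph ruling out $q=3$, $s=0$ is redundant, since Theorem~\ref{thm:bruen_silverman}(2) already covers it.
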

	\begin{proof}
		By Lemma~\ref{lem:defect_preserved}, successive shortening yields a length-maximal $4$-dimensional code of the same defect $s$, so Corollary~\ref{cor:dim4_trichotomy} gives $s\in \{0,\, q-2,\, q-1\}$. Shortening instead to dimension $5$ and applying Lemma~\ref{lem:dim5_critical}, we have $s\ne q-1$, and if $s=q-2\ge 1$ then $q = 3$. If $s=0$, case (1) of Corollary~\ref{cor:dim4_trichotomy} forces $q=2$ or $36\mid q$; in the former case $\mathcal{C}$ is a binary MDS code, hence equivalent to linear \cite[Theorem~4]{Alderson2020}, namely the even-weight code of the stated parameters.
	\end{proof}

	\begin{theorem}\label{thm:dim5}
		Let $q \ge 4$. Every length-maximal code of dimension $k \ge 5$ over an alphabet of size $q$ is MDS. Consequently, if $36 \nmid q$, then no such code exists in any dimension $k \ge 5$.
	\end{theorem}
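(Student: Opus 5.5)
This is a direct consequence of Theorem~\ref{thm:dim5existence}. Let $\mathcal C$ be a length-maximal $(n,q^k,d)_q$ $A^s$MDS code with $k\ge 5$ and $q\ge 4$. By Theorem~\ref{thm:dim5existence}, exactly one of three cases holds.

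\begin{itemize}
\item[(1)] $s=0$ and $q=2$. This is excluded by the hypothesis $q\ge 4$.
\item[(2)] $s=0$ and $36\mid q$. Here $\mathcal C$ is MDS.
\item[(3)] $s=1$ and $q=3$. This is also excluded by $q\ge 4$.
\end{itemize}

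Hence only case~(2) survives, so $s=0$ and $\mathcal C$ is MDS. This proves the first assertion.

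For the second assertion, the only surviving case also carries the condition $36\mid q$. So if $36\nmid q$, none of the three cases can occur and no length-maximal code of dimension $k\ge5$ exists. Equivalently, one can argue directly: an MDS length-maximal code with $k\ge 5\ge 4$ and $q>2$ is a $(q+k-1,q^k,q)_q$ code, and Theorem~\ref{thm:bruen_silverman}(2) then forces $36\mid q$.

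The only step needing care is confirming that Theorem~\ref{thm:dim5existence} really covers every $k\ge5$ and not only $k=5$. Its proof handles this by shortening, via Lemma~\ref{lem:defect_preserved}, to dimensions four and five while keeping the defect $s$ fixed. So no separate induction on $k$ is required here, and I expect no real obstacle.
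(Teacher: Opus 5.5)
Your proof is correct. Theorem~\ref{thm:dim5existence} comes before Theorem~\ref{thm:dim5} and its proof does not use it, so the deduction is non-circular: cases (1) and (3) are excluded by $q\ge4$, leaving $s=0$ with $36\mid q$. This is essentially the paper's own argument. The paper simply reruns the same ingredients directly rather than citing Theorem~\ref{thm:dim5existence}: Corollary~\ref{cor:dim4_trichotomy} to get $s\in\{0,q-2,q-1\}$, Lemma~\ref{lem:dim5_critical} to exclude $s=q-2$ and $s=q-1$ for $q\ge4$, shortening for $k\ge6$, and Corollary~\ref{cor:dim4_existence} (that is, Bruen--Silverman) for the $36\nmid q$ clause.
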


	\begin{proof}
		Take first $k=5$. Shortening $\mathcal C$ at a zero coordinate gives, by Lemma~\ref{lem:defect_preserved}, a length-maximal $(n-1,q^4,d)_q$ code of the \emph{same} defect $s$. By Corollary~\ref{cor:dim4_trichotomy}, no length-maximal dimension-four code has defect $1 \le s \le q-3$; together with $s \le q-1$ (Lemma~\ref{lem:divisibility}), the defect of $\mathcal C$ lies in $\{0, q-2, q-1\}$. For $s = q-1$, Lemma~\ref{lem:dim5_critical} gives $A_n < 0$ (an absurdity). For $s = q-2$ with $q \ge 4$, likewise $A_n < 0$. Hence $s = 0$. For $k \ge 6$, the result follows inductively by shortening. This proves the first assertion.

		The second assertion follows from shortening and Corollary~\ref{cor:dim4_existence}.
	\end{proof}

	\begin{remark}\label{rem:dim5_ball}
		Theorem~\ref{thm:dim5} provides a fully nonlinear counterpart of the additive non-existence theorem of Alderson and Ball \cite{alderson2026setssubspacesrestrictedhyperplane}. The excluded boundary $q = 3$ is genuinely exceptional in that a length-maximal dimension-five code \emph{does} exist. At $q = 3$, $s = q-2 = 1$, the forced enumerator $A_n = A_{n-1} = 0$, $A_d = 132$, $A_{n-2} = 110$ is exactly the weight distribution of the two-weight $[11,5,6]_3$ \emph{dual ternary Golay code} (nonzero weights $6$ and $9$), the projective two-weight code underlying the Berlekamp--van Lint--Seidel strongly regular graph $\mathrm{SRG}(243,22,1,2)$ \cite{CalderbankKantor1986}; indeed, every code with these parameters is equivalent to it (Remark~\ref{rem:uniqueness}). This exception continues one dimension further in that the extended ternary Golay code $[12,6,6]_3$ (weights $\{6,9,12\}$) is also length-maximal. We shall see that dimension $6$ is the true limit for $q=3$ (Proposition~\ref{prop:q3_dimension}).
	\end{remark}

	\subsection{The Ternary Case and Completeness}
	\label{sec:ternary_completeness}

	For the exceptional ternary codes, we record the inner distance distribution when $k=6$ and show that $k=7$ is not possible.
	\begin{proposition}\label{prop:dim6_q3}
		A length-maximal code over $\mathbb{F}_3$ of dimension $6$ has defect $s=1$ and parameters $(12,3^6,6)_3$. About every baseword its inner distance distribution is
		\[
		A_0 = 1, \quad A_6 = 264, \quad A_9 = 440, \quad A_{12} = 24,
		\]
		all other $A_w = 0$. It is realised by the extended ternary Golay code.
	\end{proposition}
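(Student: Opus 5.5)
First I pin down the defect. By Theorem~\ref{thm:dim5existence} applied with $k=6$ and $q=3$, the only possibility is case~(3), since $q=3$ is neither $2$ nor a multiple of $36$. Hence $s=1$. Then $n=(s+1)(q+1)+k-2=2\cdot4+4=12$, $d=(s+1)q=6$, and $t=n-d=s+k-1=6$, giving the parameters $(12,3^6,6)_3$.

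Next I compute the distribution. Fix a baseword and normalise it to $\mathbf 0$. By Theorem~\ref{thm:optimal_weights} the nonzero distances lie in $\{6\}\cup\{9,10,11,12\}$, so the zero-count $z(\mathbf c)$ of a nonzero word lies in $\{0,1,2,3,6\}$. Corollary~\ref{cor:orthogonal_array} (strength $k-1=5$) gives the moment identities
\[
\sum_{\mathbf c\ne\mathbf 0}\binom{z(\mathbf c)}{j}=\binom{12}{j}\bigl(3^{6-j}-1\bigr),\qquad j=0,1,2,3,4,
\]
where $j=0$ is simply the total count $728$. These are five linear equations in the five unknowns $A_6,A_9,A_{10},A_{11},A_{12}$.

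I solve the system triangularly, as in Proposition~\ref{prop:min_weight_count} and Lemma~\ref{lem:dim5_enumerator}.
\begin{itemize}
\item[$j=4$:] only $z=6$ contributes, so $15A_6=495\cdot 8$, giving $A_6=264$.
\item[$j=3$:] $A_9+20A_6=220\cdot 26=5720$, giving $A_9=440$.
\item[$j=2$:] $3A_9+A_{10}+15A_6=66\cdot 80=5280$. Since $1320+3960=5280$, this forces $A_{10}=0$.
\item[$j=1$:] $3A_9+2A_{10}+A_{11}+6A_6=12\cdot 242=2904$. Since $1320+1584=2904$, this forces $A_{11}=0$.
\item[$j=0$:] $A_{12}=728-264-440=24$.
\end{itemize}
The distribution does not depend on the chosen baseword, since only the parameters entered.

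For realisation, I would cite that the extended ternary Golay code $[12,6,6]_3$ has weight enumerator $1+264y^6+440y^9+24y^{12}$. Its parameters meet \eqref{eqn:maximal_arc_bound} with equality, so it is length-maximal.

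The only delicate point is confirming that the nonnegativity of the solved values causes no contradiction. The arithmetic above settles this. One must also check that the moment identities legitimately extend to $j=4=k-2$, which they do because the strength is $5$.
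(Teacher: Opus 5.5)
Your proposal is correct and follows essentially the same route as the paper. Theorem~\ref{thm:dim5existence} forces $s=1$, and the orthogonal-array moment identities for $j=4,3,2,1$, together with the total count, are solved triangularly to give $A_6=264$, $A_9=440$, $A_{10}=A_{11}=0$, $A_{12}=24$; your arithmetic checks out, and the only cosmetic difference is that you get $A_6$ from the $j=4$ moment directly rather than quoting Proposition~\ref{prop:min_weight_count}, which is the same computation.
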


	\begin{proof}
		By shortening, Theorem \ref{thm:dim5existence} forces $s = 1$, giving $n = 12$, $d = 6$, and $t = n-d = 6$, with zero-classes $z(\mathbf c) \in \{0,1,2,3,6\}$. Proposition~\ref{prop:min_weight_count} in the case $k = 6$ gives $A_6 = (q^2-1)\binom{12}{4}/\binom{6}{4} = 8 \cdot 495/15 = 264$. The identities $\sum_{\mathbf c \ne \mathbf 0}\binom{z(\mathbf c)}{j} = \binom{12}{j}(3^{6-j}-1)$ for $j = 1,2,3$ then give $A_9 = 440$ and $A_{10} = A_{11} = 0$, and the total $3^6-1$ gives $A_{12} = 24$. These are the weights $\{6,9,12\}$ of the extended ternary Golay code.
	\end{proof}

	\begin{remark}\label{rem:uniqueness}
		The extremal examples above are unique. Delsarte and Goethals \cite{DelsarteGoethals1975} proved that any (possibly nonlinear) code with the parameters of a binary or ternary Golay code, or of an extension thereof, is equivalent to that Golay code; the same paper (Theorem~4.4) establishes that any $(11,3^5,6)_3$ code is equivalent to the \emph{expurgated} ternary Golay code, that is, to the dual ternary Golay code. Consequently every length-maximal $(12,3^6,6)_3$ code is equivalent to the extended ternary Golay code, and every length-maximal $(11,3^5,6)_3$ code is equivalent to the dual ternary Golay code.

		The length-maximal $(8,2^4,4)_2$ code in case~(3) of Corollary~\ref{cor:dim4_trichotomy} is likewise unique up to equivalence. Normalize at a codeword and puncture one coordinate. The result is a perfect $(7,2^4,3)_2$ code with distribution $1+7z^3+7z^4+z^7$. Its seven weight-$3$ supports form the unique Steiner triple system on seven points, the Fano plane; the weight-$4$ supports are their complements, and the remaining nonzero word has full support. Thus the punctured code is the binary Hamming code. To restore the deleted coordinate while raising every distance-$3$ pair to distance at least four, the new bits must alternate on the graph joining pairs at distance three. That graph is connected because the weight-$3$ words span the Hamming code. Hence the extension is unique up to complementing the new coordinate, and is the overall-parity extension. Finally, every binary MDS code is equivalent to a linear code \cite[Theorem~4]{Alderson2020}, so every length-maximal $(k+1,2^k,2)_2$ code is equivalent to the even-weight code.
	\end{remark}

	The same method employed for Theorem~\ref{thm:dim5} determines the $q=3$ case completely.

	\begin{proposition}\label{prop:q3_dimension}
		Over an alphabet of size $q=3$, no length-maximal code of dimension $k \ge 7$ exists. Therefore, the maximal dimension of a length-maximal code over an alphabet of size $3$ is $6$.
	\end{proposition}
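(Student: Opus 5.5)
The plan is to reduce to dimension exactly seven by shortening, pin down the defect using the dimension-five classification, and then obtain a contradiction from the forced minimum-distance count of Proposition~\ref{prop:min_weight_count}.

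\emph{Reduction to $k=7$.} Suppose a length-maximal code $\mathcal C$ over an alphabet of size $3$ has dimension $k\ge 7$. By Theorem~\ref{thm:optimal_uniform} together with Lemma~\ref{lem:defect_preserved}, each shortening $\mathcal C_i^{(a)}$ is again length-maximal, with the same defect $s$ and dimension one less. Applying this $k-7$ times yields a length-maximal $(n,3^7,d)_3$ $A^s$MDS code. It therefore suffices to rule out $k=7$.

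\emph{Fixing the defect.} Since $k\ge 5$, Theorem~\ref{thm:dim5existence} applies. Its cases~(1) and~(2) require $q=2$ or $36\mid q$, and neither holds for $q=3$. Hence case~(3) holds, so $s=1$. This gives
\[
n=(s+1)(q+1)+k-2=13,\qquad d=(s+1)q=6,\qquad t=n-d=s+k-1=7 .
\]

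\emph{Integrality contradiction.} Normalise any codeword to $\mathbf 0$. Proposition~\ref{prop:min_weight_count} with $k=7$ gives
\[
A_6=(q^2-1)\frac{\binom{13}{5}}{\binom{7}{5}}=\frac{8\cdot 1287}{21}.
\]
Since $1287=3^2\cdot 11\cdot 13$, the numerator $8\cdot 1287$ is not divisible by $7$. So $A_6$ is not an integer, which is absurd. Hence no length-maximal ternary code of dimension $7$ exists, and by the reduction none of any dimension $k\ge 7$. Proposition~\ref{prop:dim6_q3} shows that dimension $6$ is attained by the extended ternary Golay code, so $6$ is the maximal dimension.

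\emph{Main obstacle and fallback.} The step needing care is confirming that the hypotheses of Proposition~\ref{prop:min_weight_count} hold at $k=7$. The relevant facts are that the possible zero-counts are $\{0,1,2,3,4\}\cup\{7\}$, by Theorem~\ref{thm:optimal_weights}, and that the $j=k-2=5$ moment identity isolates the value $z=t$; both are available in general. Should the integrality route fail for some reason, I would instead solve the full system
\[
\sum_{\mathbf c\ne\mathbf 0}\binom{z(\mathbf c)}{j}=\binom{13}{j}\bigl(3^{7-j}-1\bigr),\qquad j=1,\dots,5,
\]
for $A_{13},A_{12},A_{11},A_{10},A_9$, together with the total count $3^7-1$. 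I would then look for a negative value, in the same way as Lemma~\ref{lem:dim5_critical}.
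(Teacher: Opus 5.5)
Your proof is correct and follows essentially the paper's route. You reduce to $k=7$ by shortening, use Theorem~\ref{thm:dim5existence} to force $s=1$ (so $n=13$, $d=6$, $t=7$), and get a contradiction from the moment identities. The only difference is that the paper solves the full system and observes that both $A_6=3432/7$ and $A_{13}=-324/7$ are impossible, whereas you stop at the top moment via Proposition~\ref{prop:min_weight_count}, whose non-integral value $A_6=8\cdot 1287/21=3432/7$ already suffices.
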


	\begin{proof}
		By shortening, it suffices to consider $k=7$. Again by shortening, Theorem \ref{thm:dim5existence} forces $s = 1$, so $n = 13$, $d = 6$, and $t = n-d = 7$, with zero-classes $z(\mathbf c) \in \{0,1,2,3,4,7\}$. The identities $\sum_{\mathbf c \ne \mathbf 0}\binom{z(\mathbf c)}{j} = \binom{13}{j}(3^{7-j}-1)$ for $j = 1,\dots,5$, together with the total $3^7-1$, force
		\[
		A_6 = \tfrac{3432}{7}, \quad A_9 = 1430, \quad A_{10} = A_{11} = 0, \quad A_{12} = 312, \quad A_{13} = -\tfrac{324}{7}.
		\]
		As $A_6, A_{13} \notin \mathbb{Z}$ (and $A_{13} < 0$), no length-maximal code of dimension $7$ exists.
	\end{proof}

	Assembling Theorem~\ref{thm:dim5existence} with the uniqueness results of Remark~\ref{rem:uniqueness}, the existence question in dimensions five and above is settled whenever $36 \nmid q$: only linear examples occur.

	\begin{corollary}[Linearity in dimension at least five]\label{cor:dim5_linearity}
		Let $\mathcal{C}$ be a length-maximal $(n, q^k, d)_q$ $A^s$MDS code with $k \ge 5$ and $36 \nmid q$. Then $\mathcal{C}$ is equivalent to a linear code, namely the even-weight $[k+1, k, 2]_2$ code, the dual ternary Golay $[11,5,6]_3$ code, or the extended ternary Golay $[12,6,6]_3$ code. In particular, a genuinely nonlinear length-maximal code of dimension $k \ge 5$ must be an MDS code with $36 \mid q$.
	\end{corollary}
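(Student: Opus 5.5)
The plan is a case split driven by Theorem~\ref{thm:dim5existence}. Since $k\ge5$, that theorem puts $\mathcal C$ in one of three cases: (1) $s=0$, $q=2$; (2) $s=0$, $36\mid q$; (3) $s=1$, $q=3$. The hypothesis $36\nmid q$ rules out case (2), so only (1) and (3) need to be handled. The final sentence of the corollary is then just the contrapositive: a length-maximal code with $k\ge5$ that is genuinely nonlinear cannot fall in (1) or (3). It therefore lies in case (2), which is MDS with $36\mid q$.

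In case (1) the code is a length-maximal $(k+1,2^k,2)_2$ code. Theorem~\ref{thm:dim5existence}(1) and Remark~\ref{rem:uniqueness}, via \cite[Theorem~4]{Alderson2020}, already identify it as equivalent to the even-weight $[k+1,k,2]_2$ code.

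In case (3) we have $q=3$ and $s=1$. First, Proposition~\ref{prop:q3_dimension} bounds the dimension: it gives $k\le 6$, so $k\in\{5,6\}$. The length is fixed by $n=(s+1)(q+1)+k-2=k+6$, and the distance by $d=(s+1)q=6$.
\begin{itemize}
\item For $k=5$ the parameters are $(11,3^5,6)_3$. By the Delsarte--Goethals result recorded in Remark~\ref{rem:uniqueness}, the code is equivalent to the dual ternary Golay code.
\item For $k=6$ the parameters are $(12,3^6,6)_3$. Proposition~\ref{prop:dim6_q3} gives these parameters, and Remark~\ref{rem:uniqueness} then gives equivalence to the extended ternary Golay code.
\end{itemize}

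The only step needing care is making sure the cited uniqueness theorems apply to arbitrary, possibly nonlinear codes with exactly these parameters. They do: Delsarte--Goethals is stated for unrestricted codes, and the binary MDS linearity result also holds without any linearity assumption. Everything else is bookkeeping on $n$ and $d$.
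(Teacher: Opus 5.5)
Your proposal is correct and follows the paper's proof essentially verbatim. You use the same case split from Theorem~\ref{thm:dim5existence}, the binary MDS linearity result \cite[Theorem~4]{Alderson2020} for $q=2$, and, for $q=3$, the dimension cap of Proposition~\ref{prop:q3_dimension} together with the Delsarte--Goethals uniqueness recorded in Remark~\ref{rem:uniqueness}.
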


	\begin{proof}
		Since $36 \nmid q$, Theorem~\ref{thm:dim5existence} leaves two cases. If $s = 0$ and $q = 2$, then $\mathcal{C}$ is a binary MDS code, equivalent to linear by \cite[Theorem~4]{Alderson2020}, necessarily the even-weight code. If $s = 1$ and $q = 3$, then $k \le 6$ (Proposition~\ref{prop:q3_dimension}), so $\mathcal{C}$ has parameters $(11, 3^5, 6)_3$ or $(12, 3^6, 6)_3$, and Remark~\ref{rem:uniqueness} gives equivalence to the dual or extended ternary Golay code, respectively.
	\end{proof}

	Collecting the dimensional bounds and enumerators yields a complete description.

	\begin{proposition}\label{prop:completeness}
		The inner distance distribution about every baseword of a length-maximal $(n,q^k,d)_q$ code is uniquely determined by $q$, $k$, and $s$. Equivalently, after any baseword is normalised to $\mathbf0$, the resulting Hamming weight enumerator is parameter-determined. For $s=0$ this is the distance distribution of a $(q+k-1,q^k,q)_q$ MDS code \cite{Alderson2020}. For $s\ge1$ a length-maximal code exists only when $k\le4$, or $q=3$ and $k\le6$, and the distribution is the explicit one furnished by Corollary~\ref{cor:dim23_enumerator}, Theorem~\ref{thm:dim4_enumerator}, Lemma~\ref{lem:dim5_enumerator}, or Proposition~\ref{prop:dim6_q3}.
	\end{proposition}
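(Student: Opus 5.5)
The statement assembles earlier results, so the plan is to combine them through a general moment argument and then consult the existence classification.

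First I fix a baseword $x$ and normalise it to $\mathbf0$. Put $z(\mathbf c)=n-w(\mathbf c)$ for each nonzero codeword. By Theorem~\ref{thm:optimal_weights}, $z(\mathbf c)\in\{0,1,\dots,k-3\}\cup\{t\}$ with $t=s+k-1$. By Corollary~\ref{cor:orthogonal_array}, any $j\le k-1$ prescribed coordinates are simultaneously zero in exactly $q^{k-j}$ codewords. Counting pairs (codeword, $j$-set of zero coordinates) then gives
\[
\sum_{\mathbf c\ne\mathbf0}\binom{z(\mathbf c)}{j}=\binom nj\bigl(q^{k-j}-1\bigr),\qquad 0\le j\le k-2 .
\]
Write $B_m$ for the number of nonzero codewords with $z=m$. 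These identities form a linear system in the $k-1$ unknowns $B_0,\dots,B_{k-3},B_t$, one equation for each $j=0,\dots,k-2$.

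The coefficient matrix has entries $\binom{m}{j}$ with $m\in\{0,\dots,k-3,t\}$. The block on $m\le k-3$, $j\le k-3$ is unitriangular. In the last row ($j=k-2$), every entry vanishes except $\binom{t}{k-2}$, and this is nonzero because $t\ge k-1$. So the matrix is block triangular with nonzero diagonal and hence invertible. The unique solution depends only on $n,k,q,t$, and therefore only on $q,k,s$; in particular it is independent of $x$. This is exactly Proposition~\ref{prop:min_weight_count} extended by back-substitution, as in the proofs of Theorem~\ref{thm:dim4_enumerator} and Lemma~\ref{lem:dim5_enumerator}. This settles the general determination claim.

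For $s=0$ I would only observe that the moment computation above already determines the distribution, and that it must agree with the known MDS distribution of \cite{Alderson2020} (Remark~\ref{rem:s0_mds}). For $s\ge1$, Theorem~\ref{thm:dim5existence} excludes $k\ge5$ unless $q=3$ and $s=1$. Proposition~\ref{prop:q3_dimension} then caps the ternary case at $k\le6$. The remaining cases $k=2,3,4,5,6$ are read off from Corollary~\ref{cor:dim23_enumerator}, Theorem~\ref{thm:dim4_enumerator}, Lemma~\ref{lem:dim5_enumerator}, and Proposition~\ref{prop:dim6_q3}.

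The only delicate point is the invertibility of the moment system, namely that the distinct zero-counts are at most $k-1$ in number while $k-1$ independent binomial moments are available. I expect the main obstacle to be the bookkeeping of the case $t\le k-3$. This cannot occur, since $t=s+k-1\ge k-1$, which I would note explicitly.
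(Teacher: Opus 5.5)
Your proposal is correct and matches the paper's proof: the same triangular system of orthogonal-array moments (top moment isolating $A_d$, back-substitution for the rest), followed by the dimension restrictions for $k\ge5$ and Proposition~\ref{prop:q3_dimension} for $q=3$. The differences are minor and both valid. You obtain the binary case from Theorem~\ref{thm:dim5existence}, using $s\le q-1$ and the negativity of $A_n$ at $s=q-1$ in Lemma~\ref{lem:dim5_critical}, rather than from Corollary~\ref{cor:binary_refinement_general} and Lemma~\ref{lem:binary_amds_bounds}. You also treat $s=0$ with the same moment system instead of citing the MDS weight distribution.
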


	\begin{proof}
		Let $s \ge 1$. By Theorem~\ref{thm:optimal_weights}, the zero-count of a nonzero codeword lies in the set $\{0,1,\dots,k-3\} \cup \{t\}$, so there are $k-1$ weight classes. The associated linear system is triangular: the top moment $j = k-2$ isolates $A_d$ (Proposition~\ref{prop:min_weight_count}), each successive lower moment introduces exactly one new class, and the total $q^k-1$ fixes the full-weight count $A_n$; back-substitution therefore determines all class sizes uniquely. The dimension restrictions come from Theorem~\ref{thm:dim5} (for $q \ge 4$) and Proposition~\ref{prop:q3_dimension} (for $q = 3$). For $q = 2$, Corollary~\ref{cor:binary_refinement_general} of Section~\ref{sec:binary_refinements}, which is proved independently of the present section, rules out $s \ge 2$, as it gives $n \le 3s+k-1 < 3s+k+1 = (s+1)(q+1)+k-2$; and Lemma~\ref{lem:binary_amds_bounds} caps the dimension at $k \le 4$ when $s = 1$. For $s = 0$ the code is MDS, with weight distribution determined by $n$, $k$, and $q$ alone \cite{Alderson2020}.
	\end{proof}

	Table~\ref{tab:enumerators} collects the resulting distributions dimension by dimension.

	\begin{table}[htbp]
		\centering
		\renewcommand{\arraystretch}{1.6}
		\caption{The inner distance distribution about any baseword of a length-maximal $(n,q^k,d)_q$ code with defect $s\ge1$, where $n=(s+1)(q+1)+k-2$, $d=(s+1)q$, and $t=n-d=s+k-1$; after normalising the baseword to zero, this is the pointed Hamming weight enumerator. For $s=0$ see Remark~\ref{rem:s0_mds}. The minimum-distance count is $A_d=(q^2-1)\binom{n}{k-2}/\binom{t}{k-2}$ (Proposition~\ref{prop:min_weight_count}). In the $k=5$ row, negativity of $A_n$ for the admissible nonzero defects yields non-existence when $q\ge4$ (Theorem~\ref{thm:dim5}). The $k=6$ row concerns $q=3$, $s=1$ (Proposition~\ref{prop:dim6_q3}).}
		\label{tab:enumerators}
		\begin{tabular}{@{}clll@{}}
			\toprule
			$k$ & nonzero distances & $A_d$ & $A_n$ (distance-$n$ count) \\
			\midrule
			$2$ & $d$ & $q^2-1$ & $0$ \\
			$3$ & $d,\,n$ & $\dfrac{n(q^2-1)}{s+2}$ & $q^3-1-A_d$ \\
			$4$ & $d,\,n{-}1,\,n$ & $\dfrac{n(n-1)(q^2-1)}{(s+3)(s+2)}$ & $\dfrac{q^2(q-1)^2}{q+2}\ (s{=}q{-}1);\ \ 0\ (s{=}q{-}2)$ \\
			$5$ & $d,\,n{-}2,\,n{-}1,\,n$ & $\dfrac{n(n-1)(n-2)(q^2-1)}{(s+4)(s+3)(s+2)}$ & $0\ (q{=}3);\ \ {<}0\ (q{\ge}4)$ \\
			$6$ & $6,\,9,\,12$ & $(q^2-1)\dbinom{n}{4}\big/\dbinom{t}{4}$ & $24\ (q{=}3)$ \\
			$\ge 7$ & \multicolumn{3}{l}{no code with $s\ge1$ exists (Propositions~\ref{prop:q3_dimension}, \ref{prop:completeness})} \\
			\bottomrule
		\end{tabular}
	\end{table}

	In the next section we show that additional hypotheses such as a binary alphabet or a large defect each permit  tighter bounds.

	\section{Refinements and Applications}
	\label{sec:refinements}
	
	In this section we first obtain parity-sensitive binary length bounds, extending a result of Guerrini, Meneghetti, and Sala. We then give the full shortening--Plotkin hierarchy, whose members improve Theorem~\ref{thm:main_bound} as the defect grows.

	\subsection{Binary Refinements}
	\label{sec:binary_refinements}

	We first recall the Hamming (sphere-packing) bound, which will be useful in this section.
	
	\begin{theorem}[Hamming Bound \cite{Hamming1950, MacWilliamsSloane1977}]
		\label{thm:hamming}
		Let $\mathcal{C}$ be an $(n,M,d)_q$ code, where $t = \left\lfloor \frac{d-1}{2} \right\rfloor$ is the maximum number of errors the code can correct. Then
		\[
		M \leq \frac{q^n}{\sum_{i=0}^{t} \binom{n}{i} (q-1)^i}.
		\]
	\end{theorem}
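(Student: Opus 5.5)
The statement is the Hamming (sphere-packing) bound, and I will prove it with the classical ball-packing argument. It needs no linearity, so it applies verbatim to arbitrary $(n,M,d)_q$ codes over an alphabet $\A$ with $|\A|=q$.

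Let $t=\lfloor (d-1)/2\rfloor$. For each codeword $\mathbf c\in\mathcal C$, consider the Hamming ball
\[
B_t(\mathbf c)=\{\mathbf x\in\A^n : \dist(\mathbf x,\mathbf c)\le t\}.
\]
There are two steps.

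First, I count the points of a single ball. A word at distance exactly $i$ from $\mathbf c$ is determined by two choices: the $i$ positions where it differs from $\mathbf c$, in $\binom{n}{i}$ ways, and a symbol different from $c_j$ in each of those positions, in $(q-1)^i$ ways. Hence
\[
|B_t(\mathbf c)|=\sum_{i=0}^{t}\binom{n}{i}(q-1)^i,
\]
and this number does not depend on $\mathbf c$.

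Second, I show the balls around distinct codewords are pairwise disjoint. Suppose $\mathbf x\in B_t(\mathbf u)\cap B_t(\mathbf v)$ with $\mathbf u\ne\mathbf v$. The triangle inequality for the Hamming metric gives
\[
\dist(\mathbf u,\mathbf v)\le \dist(\mathbf u,\mathbf x)+\dist(\mathbf x,\mathbf v)\le 2t\le d-1,
\]
which contradicts the minimum distance being $d$.

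Since the $M$ disjoint balls all lie in $\A^n$, we get
\[
M\sum_{i=0}^{t}\binom{n}{i}(q-1)^i\le q^n,
\]
and dividing by the ball size gives the bound.

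No step is a real obstacle. The only points needing care are the exact ball-size count, where each of the $i$ differing positions has exactly $q-1$ alternative symbols, and the inequality $2t\le d-1$, which follows directly from the definition of $t$ as a floor.
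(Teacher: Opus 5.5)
Your proof is correct: it is the classical sphere-packing argument, with the exact count of the radius-$t$ ball and disjointness of the balls following from $2t\le d-1$ and the triangle inequality. The paper does not prove this bound itself; it only cites it from Hamming and MacWilliams--Sloane, and your disjoint-balls argument is the standard proof given in those sources.
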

	
	We now proceed with a preliminary observation  that sufficiently long binary codes are forced to be symbol-uniform.
	
	\begin{lemma}\label{lem:binary_symbol_uniform_bound}
		Let $\mathcal{C}$ be a binary $(n, 2^k, d)_2$ $A^s$MDS code with $k \ge 2$.
		If $n > 3s + k - 1$, then $\mathcal{C}$ is symbol-uniform.
	\end{lemma}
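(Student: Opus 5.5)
The plan is to argue by contradiction: assume some coordinate is not balanced, shorten down to a three-word code, and apply a Plotkin-type averaging argument to three words. For $q=2$ the bound of Theorem~\ref{thm:main_bound} reads $n\le 3s+k+1$. So the hypothesis leaves only $n\in\{3s+k,\,3s+k+1\}$, but the argument below does not need to split into these cases.

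Suppose $\mathcal C$ is not symbol-uniform in some coordinate, say coordinate $1$. Then some symbol $\alpha$ occurs there in at least $2^{k-1}+1$ codewords. The shortened code $\mathcal C_1^{(\alpha)}$ therefore has length $n-1$, at least $2^{k-1}+1$ codewords, and minimum distance at least $d$ (Lemma~\ref{lem:shortening}(1)).

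Next, apply Lemma~\ref{lem:shortening}(2) a further $k-2$ times, each time choosing the majority symbol. Each step replaces the size $M$ by at least $\lceil M/2\rceil$, and $\lceil (2^{j}+1)/2\rceil = 2^{j-1}+1$. Hence after these $k-2$ steps we obtain a binary code with at least $2^{1}+1=3$ codewords. Its length is $n-1-(k-2)=n-k+1$ and its minimum distance is still at least $d=n-k+1-s$. When $k=2$ no further shortening is needed, since $\mathcal C_1^{(\alpha)}$ already has at least $3$ words.

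Now take any three codewords of this code and sum their three pairwise distances, counting coordinate by coordinate. In a single binary coordinate, three words give at most $2$ disagreeing pairs, so the total is at most $2(n-k+1)$. On the other hand each pair is at distance at least $d$, so the total is at least $3d = 3(n-k+1-s)$. Combining these gives $3(n-k+1-s)\le 2(n-k+1)$, that is, $n-k+1\le 3s$, or $n\le 3s+k-1$. This contradicts the hypothesis $n>3s+k-1$, so $\mathcal C$ is symbol-uniform.

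The only delicate point is the size bookkeeping in the second paragraph. One must check that starting from $2^{k-1}+1$ rather than $2^{k-1}$ words leaves exactly three words, not two, after the $k-2$ majority shortenings. The ceiling identity above guarantees this, and the three-word version of Plotkin is exactly what produces the threshold $3s+k-1$.
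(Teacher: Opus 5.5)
Your proof is correct, and it takes a more direct route than the paper's.

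The paper argues by induction on $k$. Its base case $k=2$ is precisely your three-word count, phrased via agreements: three words share coordinate $1$, each of the other $n-1$ coordinates contains at least one agreeing pair, and each pair agrees in at most $s+1$ places, so $n-1\le 3s$. For $k\ge 3$ it proceeds in three steps.
\begin{enumerate}
\item It takes $2^{k-1}$-subsets of the overfull fibre and uses Lemma~\ref{lem:defect_preserved} to see that, after the fixed coordinate is deleted, they have defect $s$ and satisfy the length hypothesis in dimension $k-1$.
\item It applies the inductive hypothesis to make each such subset symbol-uniform.
\item It compares two such subsets whose symmetric difference is a pair $\{u,v\}$, which forces $u=v$.
\end{enumerate}

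You instead carry the surplus of one codeword down through $k-2$ majority shortenings, using $\lceil (2^j+1)/2\rceil = 2^{j-1}+1$. This lands you directly in the base-case configuration: three distinct words of length $n-k+1$, pairwise at distance at least $d$. A single count then finishes the argument.

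Your version replaces the induction and the subset-swapping step with one inequality. It uses only Lemma~\ref{lem:shortening}(1)--(2), with no appeal to Lemma~\ref{lem:defect_preserved} or Theorem~\ref{thm:main_bound}. It also makes transparent that the threshold $3s+k-1$ is exactly the binary three-word Plotkin count. What the paper's formulation buys is uniformity with the proof of Theorem~\ref{thm:optimal_uniform}, which uses the same subset-swapping induction.
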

	\begin{proof}
		The base case $k = 2$: suppose $\mathcal{C}$ is not symbol-uniform. Some coordinate contains a symbol appearing in at least $3$ codewords; say
		$a_1 = b_1 = c_1 = 1$ for distinct $\mathbf{a}, \mathbf{b}, \mathbf{c} \in \mathcal{C}$. In each of the remaining $n-1$ coordinate positions at least two of these three codewords must agree, contributing at least one agreement to each position. Since any two codewords agree in at most $n - d = s+1$ positions, we get $n-1 \le 3(s+1)-3 = 3s$, contradicting $n > 3s+1$.
		
		Assume now that $k\ge3$ and that the result holds in dimension $k-1$. If $\mathcal C$ is not symbol-uniform, some coordinate has a fibre containing at least $2^{k-1}+1$ codewords. Any $2^{k-1}$-subset of this fibre, after deletion of the fixed coordinate, has defect $s$ by Lemma~\ref{lem:defect_preserved}; its length $n-1$ satisfies $n-1>3s+(k-1)-1$, so it is symbol-uniform by induction. Choose two such subsets whose symmetric difference is a pair $\{u,v\}$. Equality of their symbol counts in every remaining coordinate forces $u$ and $v$ to agree everywhere, a contradiction. Thus $\mathcal C$ is symbol-uniform.
	\end{proof}
	
	Before discussing further refinements, let us situate the binary case. For $s = 0$, the MDS case is completely settled, in that every binary MDS code is equivalent to linear \cite[Theorem~4]{Alderson2020}, and thus satisfies $n\le k+1$ (with equality achievable), and the weight spectra of binary MDS codes are determined explicitly. In particular, no genuinely nonlinear binary MDS codes exist, so binary MDS codes offer no room for nonlinear improvement.
	
	For binary codes with $s = 1$, linear length-maximal ($n=k+4$) codes exist if and only if $k \le 4$, and for $k \ge 5$ a linear binary AMDS code has maximum length $k+2$ (see e.g.\ Corollary~4.25 in \cite{alderson2025projectivesystems}). The nonlinear case mirrors that of the linear.

	\begin{lemma}\label{lem:binary_amds_bounds}
		A binary $(n, 2^k, d)_2$ AMDS code satisfies $n \le k+2$ for $k \ge 5$. In particular, length-maximal binary $A^1$MDS codes (of length $k+4$) exist
		if and only if $2 \le k \le 4$.
	\end{lemma}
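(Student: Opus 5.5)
The plan is to combine Theorem~\ref{thm:main_bound} with the Hamming bound (Theorem~\ref{thm:hamming}), puncturing once where necessary, and then to exhibit explicit linear codes for the existence half. First I would pin down the possible lengths. A binary $(n,2^k,d)_2$ AMDS code has $s=1$, so $d=n-k$. Theorem~\ref{thm:main_bound} with $q=2$ gives
\[
n\le 2\cdot 3+k-2=k+4 .
\]
So if the claim $n\le k+2$ failed for some $k\ge 5$, we would have $n=k+3$ (and $d=3$) or $n=k+4$ (and $d=4$). I would rule these out one at a time.

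The case $n=k+3$, $d=3$ is direct. The code corrects $t=1$ error, so Theorem~\ref{thm:hamming} gives
\[
2^k\le \frac{2^{k+3}}{1+(k+3)} .
\]
This means $k+4\le 8$, i.e.\ $k\le 4$, a contradiction. For $n=k+4$, $d=4$, I would puncture any one coordinate. Distinct codewords stay distinct and their distances drop by at most one, so the result is a $(k+3,2^k,d')_2$ code with $d'\ge 3$. The Hamming bound with $t=1$ again gives $k+4\le 8$, the same contradiction. Hence $n\le k+2$ whenever $k\ge 5$. I do not expect any step here to be a real obstacle; the only thing to check is that puncturing preserves the size $2^k$, and it does because $d\ge 2$.

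For the ``in particular'' statement, the same argument shows that a length-maximal $A^1$MDS code, of length $k+4>k+2$, cannot exist for $k\ge 5$. For $2\le k\le 4$ I would give explicit linear examples of length $k+4$ with $d=4$:
\begin{itemize}
\item $k=2$: the $[6,2,4]_2$ code obtained by repeating each coordinate of the $[3,2,2]_2$ even-weight code twice; this is also the $q=2$, $s=1$ case of Corollary~\ref{cor:2d_existence}.
\item $k=3$: the simplex code $[7,3,4]_2$.
\item $k=4$: the extended Hamming code $[8,4,4]_2$.
\end{itemize}
In each case $d=n-k$, so $s=1$, and each length equals $(1+1)(2+1)+k-2=k+4$, so all three are length-maximal.
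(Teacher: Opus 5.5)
Your proof is correct and follows essentially the paper's argument: the Hamming bound excludes length $k+3$, puncturing reduces longer lengths to that case, and linear codes supply the examples for $2\le k\le 4$. The differences are minor. You cap $n\le k+4$ via Theorem~\ref{thm:main_bound}, so that one puncture with $d'\ge 3$ suffices; the paper instead punctures repeatedly and uses the fact that binary MDS codes have length at most $k+1$ to keep the defect equal to $1$. You also exhibit $[6,2,4]_2$, $[7,3,4]_2$ and $[8,4,4]_2$ explicitly, where the paper cites the existence of such codes.
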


	\begin{proof}
		Suppose first that $n = k+3$, so that $d = n-k = 3$. The Hamming bound (Theorem~\ref{thm:hamming}) gives
		\[ 2^k \le \frac{2^n}{\sum_{i=0}^{1} \binom{n}{i}} = \frac{2^{k+3}}{1 + (k+3)} =\frac{8 \cdot 2^k}{k+4}, \]
		which requires $k \le 4$. Hence no binary $(k+3, 2^k, 3)_2$ $A^1$MDS code exists for $k \ge 5$.

		This excludes \emph{all} lengths $n \ge k+3$: an $A^1$MDS code has $d = n-k \ge 3$, so puncturing a coordinate causes no collisions and yields a binary $(n-1, 2^k, d')_2$ code with $d' \ge d-1$; for $n-1 \ge k+2$ the value $d' = (n-1)-k+1$ would make it a binary MDS code of length exceeding $k+1$, which is impossible, so $d' = (n-1)-k$ and the code is again $A^1$MDS. Puncturing a code of length $n \ge k+3$ down to length $k+3$ therefore produces a $(k+3, 2^k, 3)_2$ $A^1$MDS code, a contradiction. Thus $n \le k+2$ for $k \ge 5$.

		Finally, for $2 \le k \le 4$ linear length-maximal examples of length $k+4$ exist (Corollary~4.25 of \cite{alderson2025projectivesystems}).
	\end{proof}
	The argument extends to $q = 3$, and more generally yields a bound in terms of the $q$-ary Hamming ball volume for every alphabet size $q$.
	
	\begin{lemma}\label{lem:qary_amds_hamming}
		Let $\mathcal{C}$ be an $(n, q^k, d)_q$ $A^1$MDS code with $k \ge 2$ and $n = k + 2q - 1$ (one step below length-maximal). Then
		\[
		V(k + 2q - 1,\, q-1) \le q^{2q-1},
		\]
		where $V(n, t) = \sum_{i=0}^{t} \binom{n}{i}(q-1)^i$ denotes the volume of the $q$-ary Hamming ball of radius $t$. Consequently, if $q = 2$, then $k \le 4$, and if $q = 3$, then $k \le 6$.
	\end{lemma}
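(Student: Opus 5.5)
Since $n=k+2q-1$ and $s=1$, we have $d=n-k=2q-1$, so the packing radius is $\lfloor (d-1)/2\rfloor=q-1$. The plan is to mimic the first paragraph of the proof of Lemma~\ref{lem:binary_amds_bounds}: apply the Hamming bound (Theorem~\ref{thm:hamming}) directly to $\mathcal C$. This gives
\[
q^k\le \frac{q^{k+2q-1}}{V(k+2q-1,\,q-1)},
\]
and clearing denominators yields $V(k+2q-1,q-1)\le q^{2q-1}$. This is the displayed inequality. No structural input (symbol-uniformity, shortening) is needed; the content is entirely sphere packing.

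For the consequences, note that $V(k+2q-1,q-1)$ is strictly increasing in $k$, so in each case it suffices to find the first $k$ at which the inequality fails.

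For $q=2$ the condition reads $1+(k+3)\le 8$, i.e.\ $k\le 4$, recovering Lemma~\ref{lem:binary_amds_bounds}.

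For $q=3$ the condition reads $1+2(k+5)+4\binom{k+5}{2}\le 3^5=243$. I would evaluate at $k=6$: here $n=11$, and $1+22+4\cdot 55=243$, so equality holds. This is the perfect ternary Golay case, consistent with the dual Golay code being present. At $k=7$: here $n=12$, and $1+24+4\cdot 66=289>243$, so the inequality fails. Monotonicity then gives $k\le 6$.

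The only point requiring care is the monotonicity claim, together with noting that the inequality is applied at the exact length $n=k+2q-1$; both are immediate. If one also wanted the bound for all $n\ge k+2q-1$, the expected obstacle would be a puncturing reduction as in Lemma~\ref{lem:binary_amds_bounds}. However, the statement fixes $n$, so this reduction is not needed here.
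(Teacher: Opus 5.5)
Your proposal is correct and is essentially the paper's proof: you compute $d=2q-1$, apply the Hamming bound with packing radius $q-1$, and evaluate $V(k+3,1)=k+4\le 8$ and $V(k+5,2)=1+2(k+5)^2\le 243$, with your monotonicity remark spelling out a step the paper leaves implicit. One small correction to an aside: the code meeting equality at $k=6$, $n=11$ is the perfect $[11,6,5]_3$ Golay code, not the dual Golay $[11,5,6]_3$ code.
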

	
	\begin{proof}
		Since $s = 1$ and $n = k + 2q - 1$, we have $d = n - k = 2q - 1$, so $t = \lfloor (d-1)/2 \rfloor = q - 1$. The Hamming bound (Theorem~\ref{thm:hamming}) gives
		\[
		q^k \le \frac{q^n}{V(n,\, q-1)} = \frac{q^{k+2q-1}}{V(k+2q-1,\, q-1)},
		\]
		which rearranges to $V(k+2q-1,\, q-1) \le q^{2q-1}$.
		
		For $q = 2$: $V(k+3, 1) = 1 + (k+3) \le 8$ gives $k \le 4$, recovering the binary case above.\\
		For $q = 3$: $	V(k+5, 2) = 1 + 2(k+5)^2 \le 3^5$ gives $k \le 6$, with equality at $k=6$.
	\end{proof}

	\begin{remark}\label{rem:hamming_dimension_caps}
		Lemma~\ref{lem:qary_amds_hamming} recovers the dimensional bounds on length-maximal $A^1$MDS codes for $q=2$ and $q=3$, as in Lemma~\ref{lem:binary_amds_bounds} and Proposition~\ref{prop:q3_dimension} respectively. Indeed, a length-maximal $A^1$MDS code has length $k+2q$ and minimum distance $2q$, and puncturing any coordinate yields a $(k+2q-1,\, q^k,\, d')_q$ code with $d' \ge 2q-1$, to which the sphere-packing computation in the proof of Lemma~\ref{lem:qary_amds_hamming} applies verbatim, since $\lfloor (d'-1)/2 \rfloor \ge q-1$.
	\end{remark}

	Let us now focus on the cases where $s\ge 2$.  First, we have the following refinement for 3-dimensional codes.
	
	\begin{lemma}\label{lem:binary_refinement}
		Let $j \ge 0$ be an integer. If $\mathcal{C}$ is a binary $(n, 2^3, d)_2$ $A^s$MDS code with $s \ge 3j+2$, then
		\[
		n \le 3s + 2 - 2j.
		\]
	\end{lemma}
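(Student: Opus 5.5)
The plan is to apply the parity-sensitive binary Plotkin bound (Theorem~\ref{thm:plotkin_binary}) directly to the $8$-word code, with no shortening, and finish with an integrality check. Write $d=n-2-s$ throughout (since $k=3$), and put $s=3j+2+r$ with $r\ge 0$. In terms of $r$ the target becomes $n\le 7j+8+3r$.

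\emph{Preliminary case: Plotkin does not apply.} Suppose $2d\le n$ when $d$ is even, or $2d+1\le n$ when $d$ is odd. In either situation $2(n-2-s)\le n$, so $n\le 2s+4$. Since $s\ge 3j+2\ge 2j+2$, we get $2s+4\le 3s+2-2j$, and the claim holds.

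\emph{Case $d$ even and $2d>n$.} Part~(1) of Theorem~\ref{thm:plotkin_binary} gives $8\le 2\lfloor d/(2d-n)\rfloor$. Hence $4(2d-n)\le d$, that is, $7d\le 4n$. Substituting $d=n-2-s$ gives $3n\le 7s+14$, so $n\le \lfloor (7s+14)/3\rfloor = 7j+9+\lfloor (1+7r)/3\rfloor$.
\begin{itemize}
\item For $r=1$ this equals $7j+11$, which is the target.
\item For $r=2$ it equals $7j+14$, again the target.
\item For $r\ge 3$ we have $\lfloor(1+7r)/3\rfloor\le 3r-1$, so the bound is at most the target.
\item For $r=0$ the bound is $7j+9$, one more than the target $7j+8$. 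Parity closes the gap. At $n=7j+9$ and $s=3j+2$ we get $d=4j+5$, which is odd, contradicting the case assumption. So $n\le 7j+8$.
\end{itemize}
I expect this $r=0$ boundary to be the only delicate point, because the plain $q$-ary Plotkin bound from Theorem~\ref{thm:plotkin_qary} misses the target by exactly one there.

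\emph{Case $d$ odd and $2d+1>n$.} Part~(2) of Theorem~\ref{thm:plotkin_binary} gives $4(2d+1-n)\le d+1$, that is, $7d+3\le 4n$. Substituting $d=n-2-s$ gives $3n\le 7s+11$, so $n\le\lfloor(7s+11)/3\rfloor = 7j+8+\lfloor 7r/3\rfloor$. Since $\lfloor 7r/3\rfloor\le 3r$ for every $r\ge 0$, this is at most $7j+8+3r = 3s+2-2j$.

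In carrying this out I would verify the small floor computations for general $r$ explicitly, since the whole argument depends on them. No symbol-uniformity or shortening results from Section~\ref{sec:maximal_arc_bound} are needed. The hypothesis $s\ge 3j+2$ enters only in two places: through $s\ge 2j+2$ in the non-Plotkin case, and through $r\ge 0$ in the floor estimates.
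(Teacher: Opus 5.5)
Your proof is correct and rests on the same ingredients as the paper's: the parity-split binary Plotkin bound applied directly to the $8$-word code, plus the observation that $n=3s+3-2j$ forces $d$ odd (your $r=0$, $n=7j+9$ step). The paper organises this as a contradiction from $n\ge 3s+3-2j$, which makes Plotkin automatically applicable and avoids your floor bookkeeping. There is one harmless arithmetic slip in your odd case: $\lfloor(7s+11)/3\rfloor = 7j+8+\lfloor(1+7r)/3\rfloor$, not $7j+8+\lfloor 7r/3\rfloor$, and the two differ when $r\equiv 2 \pmod 3$. Since $\lfloor(1+7r)/3\rfloor\le 3r$ for all $r\ge 0$, your conclusion is unaffected.
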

	
	\begin{proof}
		Suppose to the contrary that $n \ge 3s+3-2j$. From $s = n - 3 + 1 - d$ we have $d = n-2-s$.

		If $d$ is odd, then $s \ge 3j+2$ provides
		\[
		2d+1 - n = n - 3 - 2s \ge (3s+3-2j) - 3 - 2s = s - 2j \ge j+2 > 0,
		\]
		so part~(2) of the binary Plotkin bound (Theorem~\ref{thm:plotkin_binary}) applies. Since
		\[
		\frac{d+1}{2d+1-n} = 1 + \frac{s+2}{n-3-2s} \le 1 + \frac{s+2}{s-2j} = 2 + \frac{2j+2}{s-2j} \le 2 + \frac{2j+2}{j+2} = 4 - \frac{2}{j+2} < 4,
		\]
		we obtain the contradiction $8 = |\mathcal{C}| \le 2\left\lfloor\frac{d+1}{2d+1-n}\right\rfloor \le 6$.

		If $d$ is even, then, as $d = n-2-s$ is odd when $n = 3s+3-2j$, necessarily $n \ge 3s+4-2j$, whence
		\[
		2d - n = n - 4 - 2s \ge s - 2j \ge j+2 > 0,
		\]
		and part~(1) of Theorem~\ref{thm:plotkin_binary} gives, exactly as above,
		\[
		8 = |\mathcal{C}| \le 2\left\lfloor \frac{d}{2d-n} \right\rfloor = 2\left\lfloor 1 + \frac{s+2}{n-4-2s} \right\rfloor \le 2\left\lfloor 2 + \frac{2j+2}{s-2j} \right\rfloor \le 6,
		\]
		again a contradiction. Hence $n \le 3s+2-2j$.
	\end{proof}

	By leveraging Lemma~\ref{lem:defect_preserved}, and shortening, Lemma~\ref{lem:binary_refinement} provides an extension to codes of dimension $k \ge 3$.
	
	\begin{corollary}\label{cor:binary_refinement_general}
		If $\mathcal{C}$ is a binary $(n, 2^k, d)_2$ $A^s$MDS code with $k \ge 3$ and $s \ge 2$, then
		\[
		n \le 3s + k - 1.
		\]
	\end{corollary}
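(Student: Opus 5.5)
We reduce to dimension three by repeated shortening and then apply Lemma~\ref{lem:binary_refinement} with $j=0$. Write $n = 3s+k-1+m$ and suppose, for contradiction, that $m \ge 1$, so $n \ge 3s+k$. We want a binary $(n-k+3, 2^3, d')_2$ code whose defect is still at least $s$, and we get it by induction on $k$.

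Shortening once uses Lemma~\ref{lem:shortening}(2). It produces an $(n-1, 2^{k-1}, d')_2$ subcode $\mathcal C' \subseteq \mathcal C_i^{(a)}$ with $d' \ge d$ and defect $s' \le s$. To keep $s' = s$ we apply Lemma~\ref{lem:defect_preserved}, which needs $k \ge 3$ and $n \ge s(q+1)+k-1 = 3s+k-1$ when $q=2$. This holds under our assumption, so $s'=s$. The new length satisfies $n-1 \ge 3s+(k-1)$, so the hypothesis $n \ge 3s+k$ is inherited. We repeat this $k-3$ times. Each step is valid because the current dimension is at least $4$ before the step and Lemma~\ref{lem:defect_preserved} only needs dimension at least $3$. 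We end with a binary $(n-k+3, 2^3, d)_2$ $A^s$MDS code, since the defect has stayed exactly $s$.

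Now apply Lemma~\ref{lem:binary_refinement} with $j=0$, which needs $s \ge 2$ and gives $n-k+3 \le 3s+2$. Hence $n \le 3s+k-1$, contradicting $n \ge 3s+k$. In fact the contradiction argument is not needed: the bound holds directly. If $n \le 3s+k-1$ we are done. Otherwise the hypothesis of Lemma~\ref{lem:defect_preserved} holds at every stage and the chain gives $n \le 3s+k-1$.

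The main point requiring care is that defect preservation needs a length lower bound at each shortening step, and that this bound propagates. Since $3s+k-1$ drops by exactly one when $k$ drops by one, the inequality $n \ge 3s+k-1$ is preserved under shortening, so the induction is clean. We also check that Lemma~\ref{lem:binary_refinement} is stated for an arbitrary $(n,2^3,d)_2$ code, so the extracted subcode, of size exactly $2^3$, qualifies.
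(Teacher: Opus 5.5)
Your proof is correct and follows essentially the same route as the paper: you shorten $k-3$ times, use Lemma~\ref{lem:defect_preserved} (whose hypothesis propagates because $3s+k-1$ drops by one with $k$) to keep the defect exactly $s$, and then apply Lemma~\ref{lem:binary_refinement} with $j=0$ to the resulting $(n-k+3,2^3,d)_2$ code. Your closing remark that the contradiction is ``not needed'' only restates the same case split, so it is harmless.
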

	
	\begin{proof}
		Since $s \ge 2$, Lemma~\ref{lem:binary_refinement} applies with $j=0$, establishing the result for $k=3$. For $k>3$, suppose $n \ge 3s+k$.  By Lemma \ref{lem:shortening}, successive shortening may be performed to produce an $(n-(k-3), 2^3, d)_2$ $A^s$MDS code $\mathcal{C}'$, the defect $s$ being preserved since $n \ge 3s+k$ (Lemma~\ref{lem:defect_preserved}). Applying Lemma~\ref{lem:binary_refinement} to $\mathcal{C}'$ with $j=0$ gives $n-(k-3) \le 3s+2$, hence $n \le 3s+k-1$, a contradiction.
	\end{proof}
	
	\begin{example}\label{ex:NR}
		The Nordstrom--Robinson code $(16, 2^8, 6)_2$ 	(see \cite{NordstromRobinson1967, MacWilliamsSloane1977}) has $k=8$, $d=6$, and Singleton defect $s = 3$, and as such, this famous code once more plays an extremal role, meeting the bound of Corollary~\ref{cor:binary_refinement_general} with equality.
	\end{example}

	Using the bound in Corollary~\ref{cor:binary_refinement_general}, an argument entirely similar to that in the proof of Corollary~\ref{cor:griesmer_d_small} provides the following.
	\begin{corollary}\label{cor:GBbinary_s2}
	If $\mathcal{C}$ is a binary $(n, 2^k, d)_2$ $A^s$MDS code with $k \ge 3$, $s \ge 2$, and $d\le 8$, then $\mc{C}$ respects the Griesmer bound.
	\end{corollary}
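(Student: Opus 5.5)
We want: binary, $k\ge3$, $s\ge2$, $d\le 8$ implies $n\ge g_2(k,d)$. We follow the proof of Corollary~\ref{cor:griesmer_d_small}, with Theorem~\ref{thm:main_bound} replaced by Corollary~\ref{cor:binary_refinement_general}.

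First we convert the length bound into a lower bound on $n$ in terms of $d$. Corollary~\ref{cor:binary_refinement_general} gives $n\le 3s+k-1$. Substituting $s=n-k+1-d$ yields $n\le 3n-3k+3-3d+k-1$, that is,
\[
2n\ge 3d+2k-2,\qquad\text{so}\qquad n\ge \left\lceil \tfrac{3d}{2}\right\rceil+k-1 .
\]
The ceiling is justified because $n$ is an integer.

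Next we compare this with the Griesmer sum $g_2(k,d)=\sum_{i=0}^{k-1}\lceil d/2^i\rceil$.
\begin{itemize}
\item For $d\le 8$, every term with $i\ge 3$ equals $1$, since $d/2^i\le 1$.
\item Hence $g_2(k,d)=d+\lceil d/2\rceil+\lceil d/4\rceil+(k-3)$. This holds for $k\ge3$; when $k=3$ the sum simply stops at $i=2$.
\item It therefore suffices to check $\lceil 3d/2\rceil+k-1\ge d+\lceil d/2\rceil+\lceil d/4\rceil+k-3$.
\item Since $\lceil 3d/2\rceil=d+\lceil d/2\rceil$, this reduces to $\lceil d/4\rceil\le 2$, which holds exactly when $d\le 8$.
\end{itemize}

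The main point needing care is the bookkeeping of which Griesmer terms collapse to $1$. The $i=2$ term $\lceil d/4\rceil$ can be as large as $2$, and it is absorbed by the slack of $2$ coming from the $k-1$ versus $k-3$ constants. That slack is precisely what the improvement of Corollary~\ref{cor:binary_refinement_general} over Theorem~\ref{thm:main_bound} provides. No further obstacle is expected; the hypotheses $k\ge3$ and $s\ge2$ are used only to invoke Corollary~\ref{cor:binary_refinement_general}.
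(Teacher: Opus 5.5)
Your proof is correct and is essentially the argument the paper intends. Like the paper, you combine Corollary~\ref{cor:binary_refinement_general} with $s=n-k+1-d$ to get $n\ge d+\lceil d/2\rceil+k-1$, then observe that for $d\le 8$ the Griesmer sum equals $d+\lceil d/2\rceil+\lceil d/4\rceil+k-3$ with $\lceil d/4\rceil\le 2$, and your bookkeeping of the collapsing terms, including the case $k=3$, is accurate.
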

	
	By Corollary~\ref{cor:griesmer_s_small}, binary codes with $s\le1$ respect the Griesmer bound. Pairing this with Corollary~\ref{cor:GBbinary_s2} gives the following.
	\begin{corollary}\label{cor:binary_d8_griesmer}
	 If $\mc{C}$ is an $(n, 2^k, d)_2$ $A^s$MDS code with $k\ge 3$, and  $d\le 8$, then $\mc{C}$ respects the Griesmer bound.
	\end{corollary}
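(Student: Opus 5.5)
The plan is to split on the defect $s$. For $s\le 1$, Corollary~\ref{cor:griesmer_s_small} with $q=2$ already gives the Griesmer bound, since $s\le q-1=1$. For $s\ge 2$ and $k\ge 3$, Corollary~\ref{cor:GBbinary_s2} applies directly under the hypothesis $d\le 8$. These two cases cover every defect $s\ge 0$, so the statement follows.

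For completeness I would also sketch why Corollary~\ref{cor:GBbinary_s2} holds, following the pattern of Corollary~\ref{cor:griesmer_d_small}. Corollary~\ref{cor:binary_refinement_general} gives $n\le 3s+k-1$. Substituting $s=n-k+1-d$ yields $n\le 3(n-k+1-d)+k-1$, that is,
\[
2n\ge 3d+2k-2, \qquad\text{so}\qquad n\ge \left\lceil \tfrac{3d}{2}\right\rceil+k-1 .
\]
The remaining step is to compare this with $g_2(k,d)=d+\lceil d/2\rceil+\lceil d/4\rceil+\lceil d/8\rceil+\sum_{i\ge 4}\lceil d/2^i\rceil$. When $d\le 8$, every term with $i\ge 3$ equals $1$, so
\[
g_2(k,d)=d+\lceil d/2\rceil+\lceil d/4\rceil+(k-3).
\]
Since $d+\lceil d/2\rceil=\lceil 3d/2\rceil$, the Griesmer bound holds provided $\lceil d/4\rceil-3\le -1$, that is, $\lceil d/4\rceil\le 2$, which is exactly $d\le 8$.

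The only point needing care is confirming that the two ranges together exhaust all cases: Corollary~\ref{cor:griesmer_s_small} requires only $k\ge 2$, while the hypothesis here is $k\ge 3$, so this is immediate and I expect no real obstacle.
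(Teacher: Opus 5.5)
Your proposal is correct and matches the paper's argument exactly: you combine Corollary~\ref{cor:griesmer_s_small} for $s\le 1=q-1$ with Corollary~\ref{cor:GBbinary_s2} for $s\ge 2$. Your sketch of the latter correctly supplies the details the paper leaves as ``entirely similar'': from Corollary~\ref{cor:binary_refinement_general} you get $n\ge\lceil 3d/2\rceil+k-1$, and comparing this with $g_2(k,d)$ gives the bound for $d\le 8$.
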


	For large defect, we obtain the following stratified bound.	
	\begin{lemma}\label{lem:binary_refinement_k}
		Let $j \ge 0$ be an integer. If \(\mathcal{C}\) is a binary \((n, 2^k, d)_2\) $A^s$MDS code with $k\ge 3$ and \(s \ge 3j+2\), then
		\[
		n \le 3s - 2j+k-1.
		\]
	\end{lemma}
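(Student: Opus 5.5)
The plan mirrors the proof of Corollary~\ref{cor:binary_refinement_general}: reduce to dimension three by shortening, then apply Lemma~\ref{lem:binary_refinement}. The case $k=3$ is exactly Lemma~\ref{lem:binary_refinement}, since $3s-2j+3-1=3s+2-2j$. For $k>3$ we argue by contradiction and assume $n\ge 3s-2j+k$.

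The first step is to shorten $k-3$ times using Lemma~\ref{lem:shortening}(2). Each time we pick the more frequent symbol in the chosen coordinate, so the shortened code has at least half the codewords, and we pass to a subcode of size exactly $2^{k-1},2^{k-2},\dots$. After the $k-3$ shortenings we obtain a binary $(n-(k-3),2^3,d')_2$ code of defect $s'\le s$ with $d'\ge d$.

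The main obstacle is showing that the defect is preserved along the way. Lemma~\ref{lem:defect_preserved} gives $s'=s$ at a step in dimension $\kappa$ and length $m$ provided $m\ge s(2+1)+\kappa-1=3s+\kappa-1$. Under our assumption the code at dimension $\kappa$ (with $4\le\kappa\le k$) has length
\[
m = n-(k-\kappa)\ge 3s-2j+\kappa .
\]
This exceeds $3s+\kappa-1$ only when $j=0$, so for $j\ge1$ the lemma does not apply directly.

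To get around this I would avoid needing equality. A decrease in defect only helps, because the $k=3$ bound is monotone in $s$: if $s'<s$, the three-dimensional code must still satisfy Theorem~\ref{thm:main_bound}, namely $n-(k-3)\le 3(s'+1)+1$. So I would handle the two possibilities separately.

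If $s'=s$, apply Lemma~\ref{lem:binary_refinement} with the same $j$. This gives $n-(k-3)\le 3s+2-2j$, that is, $n\le 3s-2j+k-1$, a contradiction.

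If $s'<s$, use the crude bound for the shortened code. Since $s'+1\le s$, Theorem~\ref{thm:main_bound} gives $n-k+3\le 3s+1$, so $n\le 3s+k-2$. This contradicts $n\ge 3s-2j+k$ only when $j=0$, so for $j\ge1$ the gap remains.

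For $j\ge1$ I would therefore induct on $j$, using the smaller admissible values $j'<j$. Suppose $s'<s$, and choose $j'$ maximal with $s'\ge 3j'+2$, if such $j'$ exists. Then Lemma~\ref{lem:binary_refinement} applied to the shortened code gives
\[
n-k+3\le 3s'+2-2j'\le 3(s-1)+2-2j' .
\]
We must compare this with $3s-2j+3$. Because $s'$ may drop well below $s$, the simplest route is to track the defect so that it drops only when it is harmless. Concretely, I would choose each shortening so that it retains a pair at minimum distance, as in the proof of Lemma~\ref{lem:shortening}(4). This keeps $d'=d$ and hence $s'=s$ exactly, which requires only that some minimum-distance pair agrees in the shortened coordinate with the retained symbol. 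Such a pair exists because $d<n$, and we can then take any $2^{\kappa-1}$-subset of that fiber containing the pair. The subtle point is that the fiber containing the pair must have size at least $2^{\kappa-1}$. If the pair's fiber is the smaller one, I would first argue that the code is symbol-uniform in that coordinate, via a Lemma~\ref{lem:binary_symbol_uniform_bound}-type hypothesis, which applies since $n>3s+k-1$ is implied by $j=0$. Failing that, I would fall back on the monotonicity argument above. This step is where I expect the real work.
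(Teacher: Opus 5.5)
Your reduction to dimension three is sound, and the case $s'=s$ is handled correctly. The case $s'<s$ with $j\ge 1$ is never closed, however, so as written the argument proves only $j=0$, which is already Corollary~\ref{cor:binary_refinement_general}. The comparison you start, $n-k+3\le 3s'+2-2j'\le 3(s-1)+2-2j'$, loses the coupling between $s'$ and $j'$. After replacing $s'$ by $s-1$ you would need $j-j'\le 1$. This fails whenever the defect drops by more than a few units, e.g.\ $j'=0$ while $j\ge 2$. Your fallback does not work either. Retaining a minimum-distance pair so that $s'=s$ requires that pair's fibre to have at least $2^{\kappa-1}$ words. The symbol-uniformity you would invoke (Lemma~\ref{lem:binary_symbol_uniform_bound}) needs $n>3s+k-1$, and the contradiction hypothesis $n\ge 3s-2j+k$ does not supply this once $j\ge 1$, as you yourself note.

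The missing idea is that the dimension-three bound, optimised over $j$, is monotone in the defect, so a drop in defect can only help. Put $g(\sigma)=3\sigma+2-2\lfloor(\sigma-2)/3\rfloor$ for $\sigma\ge 2$. Raising $\sigma$ by one adds $3$ to $3\sigma$ and at most $2$ to $2\lfloor(\sigma-2)/3\rfloor$, so $g$ is strictly increasing. Also $g(s)\le 3s+2-2j$, because $j\le\lfloor(s-2)/3\rfloor$.

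Suppose the three-dimensional shortened code has $2\le s'\le s$. Then Lemma~\ref{lem:binary_refinement} with $j'=\lfloor(s'-2)/3\rfloor$ gives $n-k+3\le g(s')\le g(s)\le 3s+2-2j$. If instead $s'\le 1$, Theorem~\ref{thm:main_bound} gives $n-k+3\le 3s'+4\le 7$, while $3s+2-2j\ge (7s+10)/3\ge 8$. Either way $n\le 3s-2j+k-1$, with no case split on whether the defect is preserved and no need for symbol-uniformity.

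The paper reaches the same conclusion by induction on $k$, shortening one coordinate at a time. It takes $j$ maximal (so $s\le 3j+4$). When the defect drops by $\alpha\ge 1$, it applies the induction hypothesis with $j'=j-\lceil\alpha/3\rceil$ and uses $3\alpha-2\lceil\alpha/3\rceil\ge 1$. This is exactly the coupled comparison of $3s'-2j'$ with $3s-2j$ that your proposal lacks. With the monotonicity observation added, your direct reduction to $k=3$ is a slightly shorter route than the paper's.
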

	\begin{proof}
		Since the asserted bound is decreasing in $j$, it suffices to prove it for $j$ maximal with $s \ge 3j+2$; we may therefore assume in addition that $s \le 3j+4$. By Corollary~\ref{cor:binary_refinement_general}, we need only consider $j\ge 1$. The base case $k=3$ is Lemma~\ref{lem:binary_refinement}. Assume $k \ge 4$ and suppose to the contrary that $n \ge 3s+k-2j$. By Lemma~\ref{lem:shortening}, there exists a shortened code $\mc{C}_i^{(a)}$, an $(n',2^{k'},d')_2$ A$^{s'}$MDS code with $n'=n-1$, $k'=k-1$, $d'\ge d$, and $s'\le s$. Let $\alpha = s - s' \ge 0$.

	Note that by assumption
	 \[n'=n-1\ge 3s+k-2j-1\ge 3(3j+2)+k-2j-1=k+5+7j\ge (k-1)+13=k'+13. \]
	 Theorem~\ref{thm:main_bound}, applied to the shortened code, gives $n'\le3s'+k'+1$, hence $s'\ge4$. If $s'=4$, Corollary~\ref{cor:binary_refinement_general} instead gives $n'\le3s'+k'-1=k'+11$, contradicting $n'\ge k'+13$. Therefore $s'\ge5$, and consequently $0\le\alpha=s-s'\le s-5$.

		If $\alpha = 0$, then $s' = s \ge 3j+2$, and the induction hypothesis 	gives $n-1 \le 3s+(k-1)-1-2j = 3s+k-2j-2$, so $n \le 3s+k-2j-1$, contradicting $n \ge 3s+k-2j$.

		If $\alpha \ge 1$, set $j' = j - \lceil\alpha/3\rceil$. Since $\alpha \le s-5 \le 3j+4-5 = 3j-1$, we have $\lceil\alpha/3\rceil \le j$, and $j' \ge 0$. Moreover,
		$s' = s-\alpha \ge 3j+2-\alpha \ge 3(j-\lceil\alpha/3\rceil)+2 = 3j'+2$, so the induction hypothesis applies, giving
		\[
		n-1 \le 3(s-\alpha)+(k-2)-2\!\left(j-
		\left\lceil\tfrac{\alpha}{3}\right\rceil\right).
		\]
		Hence $n \le 3s+k-2j-(3\alpha-2\lceil\alpha/3\rceil)-1 \le 3s+k-2j-2$, since $3\alpha-2\lceil\alpha/3\rceil \ge 1$ for $\alpha \ge 1$; this contradicts $n \ge 3s+k-2j$.
	\end{proof}
	
	The following corollary shows that the binary refinement implies the Griesmer bound when $d$ is small relative to $s$.
	
	\begin{corollary}[Griesmer Bound for Binary Codes with Large Defect]
		\label{cor:binary_griesmer_large_s}
		Let $\mathcal{C}$ be a binary $(n, 2^k, d)_2$ $A^s$MDS code with $k \ge 3$ and $s \ge 3j+2$ for some integer $j \ge 0$. If 	$d \le 2(j+1)$, then $\mathcal{C}$ satisfies the Griesmer bound.
	\end{corollary}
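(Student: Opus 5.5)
We combine the stratified bound of Lemma~\ref{lem:binary_refinement_k} with the Griesmer sum, exactly as in the proof of Corollary~\ref{cor:griesmer_d_small}. First we convert the length bound into a lower bound on $n$ in terms of $d$. Substituting $s=n-k+1-d$ into $n\le 3s-2j+k-1$ gives
\[
n\le 3(n-k+1-d)-2j+k-1,
\]
that is,
\[
2n\ge 3d+2k+2j-2,
\]
so $n\ge \tfrac32 d+k+j-1$. Since $n$ is an integer, this yields $n\ge \lceil 3d/2\rceil+k+j-1$.

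Next we bound the Griesmer sum. For $d\le 2(j+1)$ we have
\[
g_2(k,d)=d+\lceil d/2\rceil+\sum_{i=2}^{k-1}\lceil d/2^i\rceil .
\]
Note that $d+\lceil d/2\rceil=\lceil 3d/2\rceil$, so it suffices to show
\[
\sum_{i=2}^{k-1}\lceil d/2^i\rceil\le k+j-1 .
\]
Each term is at least $1$ and equals $1$ once $2^i\ge d$. The excess of term $i$ over $1$ is $\lceil d/2^i\rceil-1\le (d-1)/2^i$. Summing over $i\ge2$, the total excess is at most
\[
\frac{d-1}{2}\le \frac{2j+1}{2}.
\]
Since the total excess is an integer, it is at most $j$. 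Hence the sum is at most $(k-2)+j$, which is at most $k+j-1$. (The bound $(d-1)/2^i$ on the excess holds because $\lceil x\rceil-1<x$ gives $\lceil d/2^i\rceil-1\le\lfloor (d-1)/2^i\rfloor$.) Combining the two steps gives $n\ge g_2(k,d)$.

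There is one point to check. Lemma~\ref{lem:binary_refinement_k} requires $s\ge 3j+2$ and $k\ge3$, and both are our hypotheses, so it applies directly. The main risk is an off-by-one in the Griesmer estimate. The slack found above, $k+j-2$ against $k+j-1$, suggests the argument is comfortable, so the case analysis on $d$ is only needed if the cruder excess estimate fails. If it does, I would instead split into $d\le 4$ and $4<d\le 2j+2$ and compute the terms directly.
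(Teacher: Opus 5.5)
Your proof is correct and follows essentially the same route as the paper: apply Lemma~\ref{lem:binary_refinement_k}, substitute $s=n-k+1-d$ to get $n\ge d+\lceil d/2\rceil+k-1+j$, and then bound the tail $\sum_{i=2}^{k-1}\lceil d/2^i\rceil$ using $d\le 2(j+1)$. The only difference is in that tail estimate: the paper uses the real-valued bound $(k-2)+d/2\le k-1+j$, while your integer-excess argument gives the slightly sharper $(k-2)+j$, so no case split on $d$ is needed.
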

	
\begin{proof}
	By Lemma~\ref{lem:binary_refinement_k},
	\[
	n \le 3s + k - 1 - 2j.
	\]
	Substituting $s = n - k + 1 - d$ and rearranging gives
	\begin{equation}\label{eqn:binary_large_s_length}
	n \ge d + \left\lceil\frac{d}{2}\right\rceil + k - 1 + j.
	\end{equation}

	On the other hand, for $d \le 2(j+1)$,
	\begin{align}
		g_2(k,d) &= d + \left\lceil\frac{d}{2}\right\rceil + \sum_{i=2}^{k-1} \left\lceil\frac{d}{2^i}\right\rceil \notag\\
		&< d + \left\lceil\frac{d}{2}\right\rceil + (k-2) + \sum_{i=2}^\infty \frac{d}{2^i} \notag\\
		&= d + \left\lceil\frac{d}{2}\right\rceil + k - 2 + \frac{d}{2} \notag\\
		&\le d + \left\lceil\frac{d}{2}\right\rceil + k - 1 + j, \label{eqn:binary_large_s_griesmer}
	\end{align}
	since $\frac{d}{2} \le j+1$. Comparing \eqref{eqn:binary_large_s_length} and \eqref{eqn:binary_large_s_griesmer} completes the proof.
\end{proof}

	The binary refinements exploit parity of $d$, though we may still obtain similar results when the defect $s$ is large relative to $q>2$.
	
	\subsection{A Shortening--Plotkin Hierarchy}
	\label{sec:large_defect}
	
	 Theorem~\ref{thm:main_bound} employed shortening to $k=2$. Shortening instead to any intermediate dimension and applying the same Plotkin estimate gives a hierarchy of bounds.

	\begin{theorem}[Shortening--Plotkin hierarchy]\label{thm:plotkin_shortening_hierarchy}
		Let $\mathcal C$ be an $(n,q^k,d)_q$ $A^s$MDS code with $k\ge2$. For every integer $r$ with $2\le r\le k$,
		\begin{equation}\label{eq:plotkin_shortening_hierarchy}
		 n\le
		 \left\lfloor (s+r-1)\frac{q^r-1}{q^{r-1}-1}\right\rfloor+k-r.
		\end{equation}
	\end{theorem}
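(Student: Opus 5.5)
Fix $r$ with $2\le r\le k$. We follow the proof of Theorem~\ref{thm:main_bound}, but stop shortening at dimension $r$ instead of $2$.

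First, apply Lemma~\ref{lem:shortening}(2) successively $k-r$ times. At each step we choose the symbol so that at least $\lceil M/q\rceil$ codewords survive, and then pass to a subcode of exactly $q^{(\cdot)}$ words. Deleting codewords can only raise the minimum distance, so this is harmless. The result is an $(n-k+r,\,q^r,\,d')_q$ code $\mathcal C'$ with $d'\ge d$.

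Second, apply the $q$-ary Plotkin length bound~\eqref{eq:plotkin_length} to $\mathcal C'$ with $M=q^r$. Since
\[
\frac{1-q^{-r}}{1-q^{-1}}=\frac{q^r-1}{q^{r-1}(q-1)},
\]
we obtain
\[
n-k+r\;\ge\; d'\,\frac{q^r-1}{q^{r-1}(q-1)}\;\ge\; d\,\frac{q^r-1}{q^{r-1}(q-1)}.
\]

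Third, eliminate $d$. Set $m=n-k+r$ and substitute $d=n-k+1-s=m-(s+r-1)$. Then
\[
m\,q^{r-1}(q-1)\;\ge\;\bigl(m-(s+r-1)\bigr)(q^r-1).
\]
Collecting the $m$ terms, note that $(q^r-1)-q^{r-1}(q-1)=q^{r-1}-1$, which is positive because $r\ge2$. Hence
\[
m(q^{r-1}-1)\le (s+r-1)(q^r-1),
\]
that is,
\[
m\le (s+r-1)\frac{q^r-1}{q^{r-1}-1}.
\]
Because $m$ is an integer, we may take the floor. Adding back $k-r$ gives~\eqref{eq:plotkin_shortening_hierarchy}.

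As a consistency check, at $r=2$ the fraction equals $q+1$, and the bound reduces to Theorem~\ref{thm:main_bound}.

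The only point needing care is the first step. Shortening alone does not produce a code of size exactly $q^r$, only at least that many words. We must therefore state explicitly that we take a subcode and that this preserves $d'\ge d$; the Plotkin inequality then applies with $M=q^r$ exactly. The rest is routine algebra.
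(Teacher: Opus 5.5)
Your proof is correct and follows essentially the same route as the paper: shorten $k-r$ times to a $q^r$-word code, apply the unconditional $q$-ary Plotkin bound, rearrange using $(q^r-1)-q^{r-1}(q-1)=q^{r-1}-1>0$, and take the floor. The only cosmetic difference is that the paper works with the shortened code's defect $s'\le s$, whereas you use $d'\ge d$ directly; these are equivalent. (The subcode of size exactly $q^{k-1}$ is already provided by Lemma~\ref{lem:shortening}(2), and in any case Plotkin's bound is monotone in $M$.)
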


	\begin{proof}
		Shorten $k-r$ times to a code of length $N=n-k+r$, size $q^r$, and minimum distance $d'\ge d$. Its defect is $s'=N-r+1-d'=n-k+1-d'\le s$. The unconditional Plotkin inequality \eqref{eq:plotkin_length} gives
		\[
		 N\ge d'\frac{1-q^{-r}}{1-q^{-1}}
		   =(N-r+1-s')\frac{q^r-1}{q^{r-1}(q-1)}.
		\]
		Rearranging and using $s'\le s$ yields
		\[
		 N\le (s'+r-1)\frac{q^r-1}{q^{r-1}-1}
		 \le (s+r-1)\frac{q^r-1}{q^{r-1}-1}.
		\]
		Since $N$ is integral, adding $k-r$ proves \eqref{eq:plotkin_shortening_hierarchy}.
	\end{proof}

	The case $r=2$ is Theorem~\ref{thm:main_bound}; the case $r=3$ is the most convenient next member and  improves that bound when $s\ge q$. For fixed parameters one may take the minimum of \eqref{eq:plotkin_shortening_hierarchy} over all $2\le r\le k$; members with $r>3$ can give further improvements for larger defect.
	
	The bound below is the case $r=k=3$ of Theorem~\ref{thm:plotkin_shortening_hierarchy}. We  give a direct agreement-counting proof, parallel to that of Lemma~\ref{lem:2d_bound} since the equality analysis of each step is what yields the characterisation of the extremal codes provide in Proposition~\ref{prop:dim3_design_equiv}.

	\begin{lemma}[Dimension-three Plotkin bound]\label{lem:dim3_plotkin_sharp}
		Let $\mc{C}$ be an $(n, q^3, d)_q$ $A^s$MDS code. Then
		\[
		n \;\le\; (s+2)\,\frac{q^2+q+1}{q+1} \;=\; (s+2)q+ \frac{s+2}{q+1}.
		\]
	\end{lemma}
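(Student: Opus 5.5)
We will argue exactly as in Lemma~\ref{lem:2d_bound}, by double counting agreements over pairs of codewords, now with $M=q^3$. For distinct $\mathbf{x},\mathbf{y}\in\mathcal{C}$ let $a(\mathbf{x},\mathbf{y})$ be the number of coordinates in which they agree. Since $\dist(\mathbf{x},\mathbf{y})\ge d$, each pair agrees in at most $n-d=s+2$ positions (using $d=n-2-s$ for $k=3$). Summing over unordered pairs gives the upper estimate
\[
\sum_{\{\mathbf{x},\mathbf{y}\}} a(\mathbf{x},\mathbf{y}) \le \binom{q^3}{2}(s+2).
\]

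For the lower estimate we count coordinate by coordinate. In coordinate $i$ let $n_{i,a}$ be the number of codewords carrying symbol $a$, so that $\sum_a n_{i,a}=q^3$. This coordinate contributes $A_i=\sum_a\binom{n_{i,a}}{2}$ agreements. By convexity of $\binom{\cdot}{2}$, $A_i$ is minimised by the uniform distribution $n_{i,a}=q^2$. Hence $A_i\ge q\binom{q^2}{2}=q^3(q^2-1)/2$, with equality exactly when $\mathcal{C}$ is symbol-uniform in coordinate $i$. Since $q\mid q^3$, the uniform value is integral, so no rounding issue arises here.

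Combining the two estimates gives
\[
n\,\frac{q^3(q^2-1)}{2}\le \frac{q^3(q^3-1)}{2}(s+2),
\]
so $n\le (s+2)(q^3-1)/(q^2-1)=(s+2)(q^2+q+1)/(q+1)$. The identity $(q^2+q+1)/(q+1)=q+1/(q+1)$ then gives the second form $(s+2)q+(s+2)/(q+1)$. This is the same inequality as the $r=k=3$ case of Theorem~\ref{thm:plotkin_shortening_hierarchy}, so we have a consistency check.

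There is no genuine obstacle in this argument. The only points needing care are that the agreement cap is $s+2$ rather than $s+1$, because the dimension is now $3$, and that we record the equality conditions for later use: every coordinate is symbol-uniform, and every pair of distinct codewords agrees in exactly $s+2$ positions. Stating these conditions explicitly sets up the characterisation of the extremal codes in Proposition~\ref{prop:dim3_design_equiv}.
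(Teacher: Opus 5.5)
Your proposal is correct and follows essentially the same route as the paper: double count agreeing pairs, cap each pair's agreements at $n-d=s+2$, and bound each coordinate's contribution below by $q\binom{q^2}{2}$ via convexity. You also record the same two equality conditions, symbol-uniformity and equidistance, that the paper later uses in Proposition~\ref{prop:dim3_design_equiv}.
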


	\begin{proof}
		Here $k = 3$, and $n-d = s+2$. For each coordinate $i$ and symbol $a\in\A$, let $f_{i,a} = |\{\mathbf{c}\in\mc{C} : c_i = a\}|$, and let $A = \sum_{i=1}^{n}\sum_{a\in\A}\binom{f_{i,a}}{2}$ count, over all coordinates, the agreeing pairs of codewords. Each of the $\binom{q^3}{2}$ pairs of distinct codewords agrees in at most $n-d = s+2$ coordinates, so $A \le \binom{q^3}{2}(s+2)$. On the other hand, $\sum_a f_{i,a} = q^3$, so by convexity of $\binom{\cdot}{2}$ each coordinate contributes $\sum_a \binom{f_{i,a}}{2}\ge q\binom{q^2}{2}$, minimised when $f_{i,a}=q^2$ for all $a$. Hence
		\[
		n\cdot q\binom{q^2}{2} \;\le\; A \;\le\; \binom{q^3}{2}(s+2),
		\qquad\text{i.e.}\qquad
		n\cdot\frac{q^3(q^2-1)}{2} \;\le\; \frac{q^3(q^3-1)}{2}\,(s+2).
		\]
		Dividing yields $n \le (s+2)\dfrac{q^3-1}{q^2-1} = (s+2)\dfrac{q^2+q+1}{q+1}$.
	\end{proof}
	
		The agreement count in Lemma~\ref{lem:dim3_plotkin_sharp} is tight precisely when the code carries a resolvable-design structure, exactly as in the dimension-$2$ case (Proposition~\ref{prop:2d_design_equiv}). Both equivalences are instances of the Semakov--Zinov'ev correspondence \cite{SemakovZinoviev1968}, here with $q^3$ codewords and blocks of size $q^2$.
	
	\begin{proposition}\label{prop:dim3_design_equiv}
		For $q \ge 2$ and $s \ge 0$, an $(n, q^3, d)_q$ $A^s$MDS code $\mathcal{C}$ attains the bound of Lemma~\ref{lem:dim3_plotkin_sharp} if and only if $\mathcal{C}$ is symbol-uniform and equidistant. Such a code corresponds, up to code equivalence, to a resolvable $2$-$(q^3,\,q^2,\,s+2)$ multidesign with $n$ parallel classes. In particular, attainment requires $(q+1) \mid (s+2)$, in which case, writing $s+2 = (q+1)m$, the parameters take the form $n = m(q^2+q+1)$, $d = mq^2$.
	\end{proposition}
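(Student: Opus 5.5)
The plan is to mirror the equality analysis of Lemma~\ref{lem:2d_bound} and Proposition~\ref{prop:2d_design_equiv}, working directly with the agreement count $A$ from the proof of Lemma~\ref{lem:dim3_plotkin_sharp}. That proof gives the chain
\[
n\cdot q\binom{q^2}{2} \;\le\; \sum_{i}\sum_{a}\binom{f_{i,a}}{2} \;=\; A \;\le\; \binom{q^3}{2}(s+2).
\]
Attaining the bound means equality throughout, and each equality is read off separately.

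\textbf{Characterisation of equality.} Equality in the left inequality holds coordinatewise. By strict convexity of $\binom{\cdot}{2}$, each coordinate then satisfies $f_{i,a}=q^2$ for all $a$, which is symbol-uniformity. Equality in the right inequality means every pair of distinct codewords agrees in exactly $s+2=n-d$ coordinates. Hence every distance equals $d$, and the code is equidistant.

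For the converse, assume the code is symbol-uniform and every pair agrees in exactly $n-d$ positions. Then both inequalities are equalities, so $n\,q\binom{q^2}{2}=\binom{q^3}{2}(s+2)$, which is the stated bound.

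\textbf{Design correspondence.} This follows the proof of Proposition~\ref{prop:2d_design_equiv} verbatim. Take the $q^3$ codewords as points. Each coordinate partitions the points into $q$ classes by symbol, and symbol-uniformity makes each class of size $q^2$. Two points lie in a common block of a given class exactly when the codewords agree in that coordinate. Equidistance therefore makes every pair of points lie in exactly $s+2$ block occurrences, giving a resolvable $2$-$(q^3,q^2,s+2)$ multidesign with $n$ parallel classes.

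Conversely, given such a multidesign, label each point by the index of its block in each class. The resulting word set is symbol-uniform, and each pair of words agrees in exactly $s+2$ coordinates. The words are distinct because $s+2<n$; this inequality follows from the replication count below whenever $q\ge 2$. Equivalence of codes corresponds to relabelling blocks within classes and permuting classes.

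\textbf{Divisibility and parameters.} This step is the only real content beyond bookkeeping. The design identity $r=\lambda(v-1)/(K-1)$ gives
\[
n=(s+2)\,\frac{q^3-1}{q^2-1}=(s+2)\,\frac{q^2+q+1}{q+1}.
\]
Since $q^2+q+1=q(q+1)+1$, we have $\gcd(q^2+q+1,q+1)=1$. Integrality of $n$ therefore forces $(q+1)\mid(s+2)$. Writing $s+2=(q+1)m$ gives $n=m(q^2+q+1)$, and then $d=n-(s+2)=m(q^2+q+1)-m(q+1)=mq^2$.

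The only point requiring care is the converse direction. There one should check that labelling blocks yields $q^3$ distinct words and that the resulting minimum distance is exactly $n-(s+2)$, so that the defect is indeed $s$. Both follow from equidistance.
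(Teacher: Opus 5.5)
Your proposal is correct and follows the paper's proof essentially step for step. Equality in both halves of the agreement-count chain of Lemma~\ref{lem:dim3_plotkin_sharp} gives symbol-uniformity and equidistance; the coordinates become parallel classes exactly as in Proposition~\ref{prop:2d_design_equiv}; and integrality of $r=(s+2)(q^2+q+1)/(q+1)$, together with $\gcd(q+1,q^2+q+1)=1$, forces $(q+1)\mid(s+2)$. Your explicit check that the labelled words are distinct (via $s+2<n$) fills in a detail the paper treats as immediate.
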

	
	\begin{proof}
		Equality in the chain $n\cdot q\binom{q^2}{2}\le A\le\binom{q^3}{2}(s+2)$ of Lemma~\ref{lem:dim3_plotkin_sharp} forces equality in both steps. The left inequality is equality iff every coordinate is balanced, $f_{i,a}=q^2$ for all $a$ (symbol-uniformity); the right inequality is equality iff every pair of distinct codewords agrees in exactly $n-d=s+2$ coordinates (equidistance). Given both, take the $q^3$ codewords as points; by symbol-uniformity each coordinate $i$ partitions the points into $q$ blocks of size $q^2$ according to the symbol carried there, a parallel class, and two codewords share a block of this class iff they agree in coordinate $i$. Equidistance says any two points lie together in exactly $s+2$ block occurrences, so the $n$ classes form a resolvable $2$-$(q^3,q^2,s+2)$ multidesign \cite{BethJungnickelLenz1999}; the converse is immediate. The replication number is $r = \lambda(v-1)/(K-1) = (s+2)(q^3-1)/(q^2-1) = n$, and $\gcd(q+1,\,q^2+q+1)=1$ forces $(q+1)\mid(s+2)$ for $n$ to be an integer.
	\end{proof}
	
	The $r=3$ member of Theorem~\ref{thm:plotkin_shortening_hierarchy} is the following convenient closed form.

	\begin{corollary}\label{cor:dim3_plotkin_floor}
		If $\mc{C}$ is an $(n, q^k, d)_q$ $A^s$MDS code with $k \ge 3$, then
		\[
		n \;\le\; \left\lfloor (s+2)\,\frac{q^2+q+1}{q+1}\right\rfloor + k - 3
		\;=\; q(s+2) + \left\lfloor\frac{s+2}{q+1}\right\rfloor + k - 3.
		\]
	\end{corollary}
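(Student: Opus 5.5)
This is the case $r=3$ of Theorem~\ref{thm:plotkin_shortening_hierarchy}, so the plan is to specialise that argument, or to shorten and then apply Lemma~\ref{lem:dim3_plotkin_sharp}. Apply Lemma~\ref{lem:shortening} successively $k-3$ times. Each time, choose the symbol so that the shortened code keeps at least $\lceil M/q\rceil$ codewords, and pass to a subcode of exactly the right size. This gives an $(n-k+3, q^3, d')_q$ code $\mathcal C'$ with $d'\ge d$.

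Next check the defect. The length of $\mathcal C'$ is $N=n-k+3$, so its defect is $s' = N-3+1-d' = n-k+1-d' \le n-k+1-d = s$. Lemma~\ref{lem:dim3_plotkin_sharp} applied to $\mathcal C'$ then gives
\[
N \le (s'+2)\frac{q^2+q+1}{q+1} \le (s+2)\frac{q^2+q+1}{q+1},
\]
using monotonicity in $s'$. Since $N$ is an integer, we may take the floor. Adding $k-3$ yields the first expression.

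For the closed form, write
\[
(s+2)\frac{q^2+q+1}{q+1} = (s+2)q + \frac{s+2}{q+1}.
\]
The first term is an integer, so the floor passes onto the fractional part:
\[
\Bigl\lfloor (s+2)q + \tfrac{s+2}{q+1}\Bigr\rfloor = q(s+2) + \Bigl\lfloor \tfrac{s+2}{q+1}\Bigr\rfloor.
\]

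There is no real obstacle. The only point needing care is that shortening may increase $d$, and hence lower the defect, but the bound is monotone in $s$, so this only helps.
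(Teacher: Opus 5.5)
Your proposal is correct and is essentially the paper's argument. The paper simply invokes the $r=3$ case of Theorem~\ref{thm:plotkin_shortening_hierarchy}, whose proof is exactly your shortening-then-Plotkin step (Lemma~\ref{lem:dim3_plotkin_sharp} is its $r=k=3$ instance), and then uses $(q^3-1)/(q^2-1)=q+1/(q+1)$ for the closed form, just as you do, including the treatment of $s'\le s$ and of the floor.
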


	\begin{proof}
		Take $r=3$ in Theorem~\ref{thm:plotkin_shortening_hierarchy} and use
		$(q^3-1)/(q^2-1)=(q^2+q+1)/(q+1)=q+1/(q+1)$.
	\end{proof}

		The bound in Corollary \ref{cor:dim3_plotkin_floor} improves on Theorem~\ref{thm:main_bound} when $s \ge q$. Indeed, a direct computation gives
\[
\left[(s+2)\frac{q^2+q+1}{q+1} + k - 3\right] - \bigl[(s+1)(q+1)+k-2\bigr]
\;=\; \frac{q\,(q-1-s)}{q+1},
\]
which is negative precisely when $s \ge q$. The comparison is made between the unfloored quantities; since the bound of Theorem~\ref{thm:main_bound} is already an integer, applying the floor in Corollary~\ref{cor:dim3_plotkin_floor} only sharpens the improvement.

For large defect we are able to provide a stratified bound.

    \begin{corollary}\label{cor:large_defect_bound_stratified}
    	If $\mc{C}$ is an $(n, q^k, d)_q$ $A^s$MDS code with $k \ge 3$, and $\beta(q+1)\le s+2 <(\beta+1)(q+1)$ for some integer $\beta\ge 1$, say $s+2=\beta(q+1)+r$ with $0\le r\le q$, then
    	\[
    	n\le (s+2-\beta)(q+1)+k-3+\beta-r.
    	\]
    	In particular, if $s\ge 2q$, then $n\le s(q+1)+k-1$.
   \end{corollary}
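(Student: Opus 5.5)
This is a direct rewriting of Corollary~\ref{cor:dim3_plotkin_floor}; no new counting is needed. That corollary gives
\[
n \le q(s+2) + \left\lfloor\frac{s+2}{q+1}\right\rfloor + k - 3 .
\]
Under the hypothesis $s+2=\beta(q+1)+r$ with $0\le r\le q$, the floor equals $\beta$ exactly. Hence
\[
n\le q(s+2)+\beta+k-3 .
\]
The remaining work is to check that this matches the stated form. Expanding the target,
\[
(s+2-\beta)(q+1)+k-3+\beta-r = q(s+2) + (s+2) - \beta(q+1) - r + \beta + k - 3 .
\]
Substituting $s+2-\beta(q+1)-r=0$ collapses this to $q(s+2)+\beta+k-3$. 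The two bounds therefore coincide, and the main claim follows.

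For the ``in particular'' clause, suppose $s\ge 2q$. Then $s+2\ge 2(q+1)$, so $\beta\ge 2$. Writing the bound as $q(s+2)+\beta+k-3$, the claim $n\le s(q+1)+k-1$ is equivalent to
\[
q(s+2)+\beta-2 \le s(q+1),
\]
that is, $\beta\le s-2q+2$. Since $\beta=\lfloor (s+2)/(q+1)\rfloor$, it suffices to show
\[
\frac{s+2}{q+1}\le s-2q+2 ,
\]
which is the same as $(s+2)\le (q+1)(s-2q+2)$, i.e.\ $q\,s\ge 2q^2$, i.e.\ $s\ge 2q$. This is exactly the hypothesis.

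The only step needing care is this last inequality: one must check the bound on $\beta$ uniformly over the whole range of $r$ rather than only at $r=0$. Bounding the floor by the real quotient $(s+2)/(q+1)$, as above, handles all $r$ at once, so I do not expect any real obstacle.
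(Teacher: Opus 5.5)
Your proposal is correct and follows the paper's proof: specialize Corollary~\ref{cor:dim3_plotkin_floor}, observe that the floor equals $\beta$, and expand to reach the stated form. For the ``in particular'' clause the paper writes the gap as exactly $q(\beta-2)+r$ and uses $\beta\ge2$, whereas you bound $\beta\le (s+2)/(q+1)$ and reduce directly to $s\ge 2q$; both arguments show that the same quantity $s-2q+2-\beta$ is nonnegative, so the difference is cosmetic.
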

	
	\begin{proof}
		From Corollary \ref{cor:dim3_plotkin_floor}, we have
		\begin{align*}
			n& \le (s+2)q + \left\lfloor\frac{s+2}{q+1}\right\rfloor + k - 3\\
			& = (s+2)(q+1) -(s+2) + \left\lfloor\frac{s+2}{q+1}\right\rfloor + k - 3\\
			& = (s+2)(q+1) -(\beta(q+1)+r)+\beta + k - 3\\
			& = (s+2-\beta)(q+1) -r +\beta + k - 3.
		\end{align*}
		For the final claim, if $s\ge 2q$ then $s+2\ge 2(q+1)$, so $\beta\ge 2$; subtracting this bound from $s(q+1)+k-1$ leaves $q(\beta-2)+r\ge 0$, whence $n\le s(q+1)+k-1$.
	\end{proof}

	Likewise, we are able to provide stratified ranges within which the Griesmer bound holds.

    \begin{corollary}\label{cor:large_defect_griesmer}
    	Let $\mathcal{C}$ be an $(n, q^k, d)_q$ $A^s$MDS code with $k \ge 3$ and $s+2 \ge \beta(q+1)$, where $\beta< q+1$. If  $d \le \beta q^2$, then $\mathcal{C}$ respects the Griesmer bound.\\
    	In particular, if $s\ge 2q$ and $d\le 2q^2$, then $n\le s(q+1)+k-1$ and $\mathcal{C}$ respects the Griesmer bound.
    \end{corollary}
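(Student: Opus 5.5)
The plan is to compare the Griesmer sum directly with the quantity $s+2=n-d-(k-3)$ after splitting off the $k-3$ trailing terms, rather than substituting into the length bound of Corollary~\ref{cor:large_defect_bound_stratified}. The reason is that the hypothesis $s+2\ge\beta(q+1)$ already gives a lower bound on $n-d$ in terms of $\beta$. The hypothesis $d\le\beta q^2$ gives an upper bound on the two leading correction terms of $g_q(k,d)$ in terms of the same $\beta$.

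\emph{Step 1 (trailing terms).} Since $\beta<q+1$ and $\beta$ is an integer, $\beta\le q$, so $d\le\beta q^2\le q^3$. Hence $\lceil d/q^i\rceil=1$ for every $3\le i\le k-1$. This gives
\[
g_q(k,d)=d+\left\lceil\frac{d}{q}\right\rceil+\left\lceil\frac{d}{q^2}\right\rceil+(k-3).
\]

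\emph{Step 2 (the two leading corrections).} From $d\le\beta q^2$ we get $d/q\le\beta q$ and $d/q^2\le\beta$. Since $\beta q$ and $\beta$ are integers, $\lceil d/q\rceil\le\beta q$ and $\lceil d/q^2\rceil\le\beta$. Therefore $\lceil d/q\rceil+\lceil d/q^2\rceil\le\beta(q+1)$.

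\emph{Step 3 (compare with the defect).} By definition $n-d=s+k-1=(s+2)+(k-3)$. The hypothesis $s+2\ge\beta(q+1)$ then yields
\[
n=d+(s+2)+(k-3)\ge d+\beta(q+1)+(k-3)\ge g_q(k,d).
\]
This is the Griesmer bound. Note that this route does not even use Corollary~\ref{cor:dim3_plotkin_floor} for the first assertion; the hypothesis $k\ge3$ is used only to make the splitting in Step 1 meaningful.

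For the ``in particular'' clause, I take $\beta=2$. The condition $s\ge2q$ gives $s+2\ge2(q+1)$, and $\beta=2<q+1$ holds for all $q\ge2$. Then $d\le2q^2=\beta q^2$, so the first part applies. The length bound $n\le s(q+1)+k-1$ is exactly the final claim of Corollary~\ref{cor:large_defect_bound_stratified}.

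I expect no real obstacle. The only points needing care are the integrality used to remove the ceilings in Step 2 and the check $d\le q^3$ in Step 1; the latter is where the restriction $\beta<q+1$ enters.
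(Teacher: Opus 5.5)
Your proof is correct, and it takes a different and more elementary route than the paper's.

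The paper first invokes Corollary~\ref{cor:large_defect_bound_stratified}, the $r=3$ shortening--Plotkin bound. It weakens that bound from $\beta'=\lfloor (s+2)/(q+1)\rfloor$ to the given $\beta$. After substituting $s=n-k+1-d$ it obtains $n\ge d+\lceil d/q\rceil+\beta+k-3$. The hypothesis $d\le\beta q^2$ is then used only to get $\lceil d/q^2\rceil\le\beta$ and to make the trailing Griesmer terms equal to $1$.

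You use nothing but the identity $n-d=s+k-1$. You then spend the hypothesis $d\le\beta q^2$ on both leading corrections, $\lceil d/q\rceil\le\beta q$ and $\lceil d/q^2\rceil\le\beta$.

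Your version shows that the first assertion is a purely arithmetic consequence of the two hypotheses and needs no Plotkin input at all. In effect, the Plotkin step in the paper says $s+2\ge\lceil d/q\rceil+\beta$, while the hypothesis already gives $s+2\ge\beta q+\beta$. The Plotkin step therefore carries extra information only when $\lceil d/q\rceil>\beta q$, that is, exactly when $d>\beta q^2$, which is the range the corollary excludes. The paper's route buys an intermediate length inequality valid for all $d$, but that extra strength is not needed to reach the Griesmer bound here.

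For the ``in particular'' clause the two proofs coincide: take $\beta=2$. The length bound $n\le s(q+1)+k-1$ is the final claim of Corollary~\ref{cor:large_defect_bound_stratified}, and this is the one place where the Plotkin bound is genuinely used.
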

    
    \begin{proof}
    	Set $\beta' = \lfloor (s+2)/(q+1) \rfloor \ge \beta$. Corollary~\ref{cor:large_defect_bound_stratified} gives $n \le (s+2-\beta')(q+1)+k-3+\beta'$, and since each unit increase of $\beta'$ decreases this bound by $q$, also
    	\[
    	n \le (s+2-\beta)(q+1)+k-3+\beta.
    	\]
    	Substitute $s = n-k+1-d$ and isolate $n$ to obtain
    	\[
    	n \ge d + \left\lceil\frac{d}{q}\right\rceil+ \beta + k - 3.
    	\]
    	For $d \le \beta q^2$, $\lceil d/q^2\rceil \le \beta$, and each term $\lceil d/q^i\rceil =1$ for $i \ge 3$, so
    	\[
    	g_q(k,d) = \sum_{i=0}^{k-1}\left\lceil\frac{d}{q^i}\right\rceil
    	\le d + \left\lceil\frac{d}{q}\right\rceil + \beta + (k-3)
    	 \le n.
    	\]
    	The second part follows by substituting $\beta=2$.
    	\qedhere
    \end{proof}

	This completes the development of bounds and structural results. We now summarise the contributions and provide some open problems.
	\section{Conclusion}
	\label{sec:conclusion}
	
	We established that $n \le (s+1)(q+1) + k - 2$ 	for every $(n,q^k,d)_q$ $A^s$MDS code with $k\ge2$. More generally, Theorem~\ref{thm:plotkin_shortening_hierarchy} supplies one bound for each shortening depth $2\le r\le k$. The main contribution of the paper is the rigidity in the cases of equality. A length-maximal code is symbol-uniform under every shortening of dimension at least two (Theorem~\ref{thm:optimal_uniform}), hence is an $\operatorname{OA}(q^k,n,q,k-1)$ of index $q$ (Corollary~\ref{cor:orthogonal_array}). Its pairwise distances lie in $\{d\}\cup\{n-k+3,\dots,n\}$, and for $k\ge3$ its defect satisfies $(s+2)\mid q(q+1)$. In dimension two, equality is equivalent to a resolvable $2$-$(q^2,q,s+1)$ multidesign and exists for every $s$ when $q$ is a prime power.

	We determine the inner distance distribution about every codeword. In dimension four they give a trichotomy: no length-maximal code exists for $1\le s\le q-3$; at $s=q-2$ no pair is at distance $n$; and at $s=q-1$ such pairs are forced and integrality requires $(q+2)\mid36$. For $q\ge4$, every length-maximal code of dimension at least five is MDS, so none exists when $36\nmid q$. The exceptional non-MDS cases are the dual and extended ternary Golay codes, of dimensions five and six. 
	
	The Singleton-type reformulation recovers and extends a bound of Guerrini, Meneghetti, and Sala for binary systematic codes. Nonlinear codes in specified parameter ranges are forced to respect the Griesmer bound, as summarised in Table~\ref{tab:griesmer_conditions}.

		\begin{table}[H] 
		\centering
		\caption{Conditions under which an $(n,q^k,d)_q$ $A^s$MDS code satisfies the
			Griesmer bound $n \ge g_q(k,d) := \sum_{i=0}^{k-1}\lceil d/q^i \rceil$.
			No linearity or systematicity assumption is made unless stated.}
		\label{tab:griesmer_conditions}
		\renewcommand{\arraystretch}{1.3}
		\begin{tabular}{@{}llr@{}}
			\toprule
			\textbf{Condition} & \textbf{Hypothesis} & \textbf{Result} \\
			\midrule
			\multicolumn{3}{@{}l}{\textit{General $q$, integer dimension
					($M = q^k$, $k \ge 2$)}} \\[2pt]
			$k = 2$
			& any $d$
			& Cor.~\ref{cor:2d_LM_GB} \\
			$k \ge 3$,\; $d \le q^2$
			& any $s$
			& Cor.~\ref{cor:griesmer_d_small} \\
			$s \le q-1$
			& any $d$\,${}^{(a)}$
			& Cor.~\ref{cor:griesmer_s_small} \\
			$s+2 \ge \beta(q+1)$,\; $d \le \beta q^2$,\; $1 \le \beta \le q$
			& $k \ge 3$
			& Cor.~\ref{cor:large_defect_griesmer} \\[6pt]
			$d \le 2q$
			& systematic${}^{(b)}$
			& \cite{Guerrini2015Optimal} \\
			$q^{k-1} \mid d$
			& any code, $M \ge q^k$
			& \cite{Guerrini2015Optimal} \\
			\midrule
			\multicolumn{3}{@{}l}{\textit{Binary ($q = 2$), integer dimension
					($M = 2^k$, $k \ge 3$)}} \\[2pt]
			$d \le 8$
			& any $s$
			& Cor.~\ref{cor:binary_d8_griesmer} \\
			$s \ge 3j + 2$,\; $d \le 2(j+1)$,\; $j \ge 0$
			& parametric family
			& Cor.~\ref{cor:binary_griesmer_large_s} \\
			\bottomrule
		\end{tabular}
		\smallskip
		
		\begin{minipage}{0.92\linewidth}\footnotesize
			$(a)$~Subsumed by row 2: $s \le q-1$ forces $d \le q^2$ via
			Thm.~\ref{thm:main_bound}. Listed separately for reference.\\
			$(b)$~Also subsumed by row 2 ($d \le 2q \le q^2$ for $q \ge 2$).
			The result in  \cite{Guerrini2015Optimal} applies to the stricter systematic class; row 2 applies to
			all codes.
		\end{minipage}
	\end{table}

		\medskip
	
	The principal open problem raised by this work is the following.

	\medskip
	
	\noindent\textbf{1. The odd $q$ problem.} 
	For linear codes, the theorem of Ball, Blokhuis, and Mazzocca \cite{Ball1997MaximalArcs} shows that non-trivial length-maximal $A^s$MDS codes do not exist when $q$ is odd and $s<q-2$. The known proof uses finite geometry. Beyond the single example of a $(21,9^3,18)_9$ code arising from the perp-system of De Clerck, Delanote, Hamilton, and Mathon \cite{DeClerck2002PerpSystems}, it is unknown whether genuinely nonlinear length-maximal codes can exist when linear ones do not. Are there further examples? Are there examples with $k>3$?
		\medskip
	
	Further natural questions include:
	\medskip

	\noindent\textbf{2. Unique extendability.}
	A classical result of Barlotti \cite{Barlotti1956}, recently extended to all dimensions \cite{alderson2025projectivesystems}, shows that a linear $A^s$MDS code of length $(s+1)(q+1)+k-3$ admits a unique extension to a length-maximal code whenever $(s+2) \mid q$ and $s < q-2$. The analogous question for nonlinear codes with $(k,s)\ne (2,0)$ is entirely open. For  $k\ge 4$, does an $(n, q^k, d)_q$ $A^s$MDS code with $n = (s+1)(q+1)+k - 3$ and $(s+2) \mid q(q+1)$ admit a unique extension to a length-maximal code? The symbol-uniformity and weight distribution results presented here may prove useful in approaching this problem.

	\medskip
	
	\noindent\textbf{3. Sharpening the divisibility condition.}
	Lemma~\ref{lem:divisibility} establishes $(s+2) \mid q(q+1)$, whereas linear length-maximal codes satisfy the stronger condition that $s = q-1$ or $(s+2) \mid q$ \cite{Ball1997MaximalArcs, alderson2025projectivesystems}. The boundary case $s=q-1$ must be excluded here: the simplex code $[q^2+q+1,3,q^2]_q$ is length-maximal with $s = q-1$ and $(s+2) = q+1 \nmid q$. Whether $(s+2) \mid q$ holds for all length-maximal codes with $s \le q-2$ is open. According to Theorem~\ref{thm:zero_set_design} it is not forced by the $2$-design structure of the dimension-$3$ core; by Lemma~\ref{lem:full_secant_divisibility} it can fail to hold only if no minimum-weight support is shared by as many as $q-1$ codewords. Either resolution would be of interest.  A proof of $(s+2) \mid q$ would align the nonlinear theory with the linear one, while a length-maximal code with $s \le q-2$ and $(s+2) \nmid q$ would furnish a genuinely nonlinear length-maximal code in a dimension $k \ge 3$ for which no linear one exists.

	No length-maximal MDS code of dimension three exists over an odd alphabet. Among the remaining pairs $(q,s)$ with $1 \le s \le q-2$ and $(s+2) \mid q(q+1)$ but $(s+2) \nmid q$, the smallest is $q=5$, $s=1$. The smallest undecided instance is therefore a $(13, 125, 10)_5$ code.
	\medskip

	\noindent\textbf{4. AMDS codes and the even-distance case.}
	Lemmas~\ref{lem:binary_amds_bounds} and~\ref{lem:qary_amds_hamming} use the Hamming bound to show that near-length-maximal $A^1$MDS codes of length $k+2q-1$ exist only for $k \le 4$ when $q=2$ and $k \le 6$ when $q=3$. The even-distance case $n = k+2q-2$ is inaccessible to the Hamming bound and remains open for large $k$. More generally, determining the maximum dimension of a near-length-maximal $A^s$MDS code for $s \ge 2$ and general $q$ is open.

	\medskip
	
	\noindent\textbf{5. Existence of length-maximal nonlinear codes.}
	In \cite{alderson2025projectivesystems}, the maximum dimension $k$ of a linear length-maximal $A^s$MDS code with $q>3$ was conjectured to be at most $5$. This conjecture has recently been settled in the affirmative  \cite{alderson2026setssubspacesrestrictedhyperplane}. For nonlinear codes the analogous question is no longer entirely open: Theorem~\ref{thm:dim5} shows that for every $q \ge 4$ with $36 \nmid q$ no length-maximal code of dimension $k \ge 5$ exists (and for every $q \ge 4$ such a code must be MDS), so the dimension is capped at four. The boundary $q = 3$ is exceptional: length-maximal codes exist there up to dimension six---the dual and extended ternary Golay codes---and no further (Proposition~\ref{prop:q3_dimension}). Moreover, every length-maximal code of dimension $k \ge 5$ with $36 \nmid q$ is equivalent to a linear one (Corollary~\ref{cor:dim5_linearity}), so genuinely nonlinear length-maximal codes can occur only in dimension $k \le 4$ or in the open MDS case $36 \mid q$. What remains genuinely open is attainment in dimension four. Here the full-weight case ($s=q-1$) is especially clean: a code exists only for $(q+2)\mid 36$, leaving the six alphabet sizes $q\in\{2,4,7,10,16,34\}$, of which only $q=2$ is realised (the extended Hamming code) and every $q>2$ would be genuinely nonlinear (Remark~\ref{rem:dim4_q1_candidates}, Table~\ref{tab:dim4_candidates}). The smallest such candidate is the two-weight $(22,256,16)_4$ code.

	\medskip
	
	This work provides a combinatorial foundation for the study of equality in the shortening--Plotkin bound and isolates concrete existence problems where genuinely nonlinear behaviour could first occur.
	
	\bibliographystyle{plain}
	\bibliography{references_v3}

\end{document}